\def\set@curr@file#1{%
  \begingroup
    \escapechar\m@ne
    \xdef\@curr@file{\expandafter\string\csname #1\endcsname}%
  \endgroup
}
\def\quote@name#1{"\quote@@name#1\@gobble""}
\def\quote@@name#1"{#1\quote@@name}
\def\unquote@name#1{\quote@@name#1\@gobble"}
\crefname{equation}{}{}
\Crefname{equation}{Eqn.~}{Eqns.~}
\crefname{enumi}{}{}
\Crefname{enumi}{}{}
\newtheorem{theorem}{Theorem}[section]
\newtheorem{lemma}[theorem]{Lemma}
\newtheorem{proposition}[theorem]{Proposition}
\newtheorem{corollary}[theorem]{Corollary}
\newtheorem{assumption}[theorem]{Assumption}
\newtheorem{remark}[theorem]{Remark}
\newtheorem{claim}[theorem]{Claim}
\newtheorem{algo}[theorem]{Algorithm}
\newcommand{\proofends}{\qed}
\newenvironment{enumeratei}{\begin{enumerate}[\upshape (i)]}{\end{enumerate}}
\newenvironment{enumeraten}{\begin{enumerate}[\upshape (1)]}{\end{enumerate}}
\newenvironment{enumerateA}{\begin{enumerate}[\upshape (A)]}{\end{enumerate}}
\numberwithin{equation}{section}
\renewcommand{\P}{\mathbb{P}}
\newcommand{\E}{\mathbb{E}}
\newcommand{\Var}{\mathrm{Var}}
\newcommand{\cond}{\mid}
\newcommand{\bcond}{\;\big\vert\;}
\newcommand{\countin}{\,\vert\,}
\newcommand{\Exp}{\mathrm{Exp}}
\newcommand{\Unif}{\mathrm{Unif}}
\newcommand{\Bernoulli}{\mathrm{Bernoulli}}
\newcommand{\bal}{\begin{aligned}}
\newcommand{\eal}{\end{aligned}}
\newcommand*{\beq}{\begin{equation}}
\newcommand*{\eeq}{\end{equation}}
\newcommand*{\sss}{\scriptscriptstyle}
\newcommand{\wih}{\widehat}
\newcommand{\wit}{\widetilde}
\def \toinp    {\buildrel {\P}\over{\longrightarrow}}
\def \toindis  {\buildrel \textit{d}\over{\longrightarrow}}
\def \eqindis  {\buildrel \textit{d}\over{=}}
\DeclarePairedDelimiter\abs{\lvert}{\rvert}
\DeclareMathOperator{\supp}{supp}
\DeclareMathOperator{\symmdiff}{\triangle}
\newcommand{\bmatch}{\omega}
\newcommand{\bmatchset}{\Omega}
\newcommand{\unimatch}{\wih{\omega}}
\newcommand{\comrole}{\mapsfrom}
\newcommand{\actives}{A}
\newcommand{\sleeping}{S}
\newcommand{\sleepingdeg}{\CV}
\newcommand{\waiting}{W}
\newcommand{\living}{L}
\newcommand{\vertices}{\SV}
\newcommand{\edges}{\SE}
\newcommand{\comp}{\SC}
\newcommand{\leftpar}{\SV^\msl}
\newcommand{\rightpar}{\SV^\msr}
\newcommand{\graphs}{\SG}
\newcommand{\comgraphs}{\SH}
\newcommand{\rg}{\CH}
\newcommand{\distr}{\sim}
\newcommand{\isom}{\simeq}
\newcommand{\groot}{o}
\newcommand{\nth}{\nn}
\newcommand{\mth}{\mm}
\newcommand{\ith}{^\text{th}}
\newcommand{\e}{{\mathrm e}}
\newcommand{\RIG}{\mathrm{RIG}}
\newcommand{\RIGC}{\mathrm{RIGC}}
\newcommand{\BCM}{\mathrm{BCM}}
\newcommand{\rCM}{\mathrm{CM}}
\newcommand{\BP}{\mathrm{BP}}
\newcommand{\rCP}{\mathrm{CP}}
\newcommand{\com}{\mathrm{Com}}
\newcommand{\comvect}{\mathbf{Com}}
\newcommand{\type}{\mathit{Type}}
\newcommand{\ldeg}{\msl\text{-}\mathrm{deg}}
\newcommand{\rdeg}{\msr\text{-}\mathrm{deg}}
\newcommand{\bdeg}{\msb\text{-}\mathrm{deg}}
\newcommand{\cdeg}{\msc\text{-}\mathrm{deg}}
\newcommand{\pdeg}{\msp\text{-}\mathrm{deg}}
\newcommand{\hitting}{\tau}
\newcommand{\hittingstd}{\hitting_{\mathrm{stn}}}
\newcommand{\hittingskip}{\hitting_{\mathrm{skip}}}
\newcommand{\livingstd}{\living^\msl_{\mathrm{stn}}}
\newcommand{\indexes}{\CI}
\newcommand{\indexesskip}{\CJ}
\newcommand{\maxindex}{j_{\mathrm{max}}}
\newcommand{\usedup}{\SU}
\newcommand{\alarmproc}{Z}
\newcommand{\partialsum}{\Sigma}
\newcommand{\goodevent}{\CE}
\newcommand{\dmax}[1]{d_{\mathrm{max}}^{#1}}
\newcommand{\dmin}[1]{d_{\mathrm{min}}^{#1}}
\newcommand{\nn}[1]{{#1}^{\sss (n)}}
\newcommand{\mm}[1]{{#1}^{\sss (m)}}
\newcommand{\step}[1]{{\tt step#1}}
\newcommand{\steps}{\MSS}
\newcommand{\inv}[1]{{#1}^{\sss (-1)}}
\newcommand{\CE}{\mathcal{E}}
\newcommand{\CH}{\mathcal{H}}
\newcommand{\CI}{\mathcal{I}}
\newcommand{\CJ}{\mathcal{J}}
\newcommand{\CT}{\mathcal{T}}
\newcommand{\CV}{\mathcal{V}}
\newcommand{\he}{\mathscr{h}}
\newcommand{\msl}{\mathscr{l}}
\newcommand{\msr}{\mathscr{r}}
\newcommand{\msb}{\mathscr{b}}
\newcommand{\msp}{\mathscr{p}}
\newcommand{\msc}{\mathscr{c}}
\newcommand{\SV}{\mathscr{V}}
\newcommand{\SA}{\mathscr{A}}
\newcommand{\SC}{\mathscr{C}}
\newcommand{\SE}{\mathscr{E}}
\newcommand{\SG}{\mathscr{G}}
\newcommand{\SH}{\mathscr{H}}
\newcommand{\SP}{\mathscr{P}}
\newcommand{\MSS}{\mathscr{S}}
\newcommand{\SU}{\mathscr{U}}
\newcommand{\SZ}{\mathscr{Z}}
\newcommand{\rd}{\mathrm{d}}
\newcommand{\bitd}{\boldsymbol{d}}
\newcommand{\bitp}{\boldsymbol{p}}
\newcommand{\bitq}{\boldsymbol{q}}
\newcommand{\R}{\mathbb{R}}
\newcommand{\N}{\mathbb{N}}
\newcommand{\Z}{\mathbb{Z}}
\newcommand{\ind}{\mathbbm{1}}
\newcommand*{\ro}{\varrho}
\newcommand{\eps}{\varepsilon}
\begin{document}

\title[Phase transition in the $\RIGC$]{Phase transition in random intersection graphs\\ with communities} 
\author[R.v.d.Hofstad]{Remco van der Hofstad}
\author[J.Komj\'athy]{J\'ulia Komj\'athy}
\author[V.Vadon]{Vikt\'oria Vadon}
\date{\today}
\subjclass[2010]{Primary: 60C05, 05C80, 90B15, 82B43.}
\keywords{Random networks, community structure, overlapping communities, random intersection graphs, bipartite configuration model, phase transition, percolation}
\address{Department of Mathematics and
	    Computer Science, Eindhoven University of Technology, P.O.\ Box 513,
	    5600 MB Eindhoven, The Netherlands.}
\email{r.w.v.d.hofstad@tue.nl, j.komjathy@tue.nl, v.vadon@tue.nl}

\begin{abstract}
The `random intersection graph with communities' models networks with communities, assuming an underlying bipartite structure of groups and individuals. Each group has its own internal structure described by a (small) graph, while groups may overlap. The group memberships are generated by a bipartite configuration model. The model generalizes the classical random intersection graph model that is included as the special case where each community is a complete graph (or clique).

The `random intersection graph with communities' is analytically tractable. We prove a phase transition in the size of the largest connected component based on the choice of model parameters. Further, we prove that percolation on our model produces a graph within the same family, and that percolation also undergoes a phase transition. Our proofs rely on the connection to the bipartite configuration model, however, with the arbitrary structure of the groups, it is not completely straightforward to translate results on the group structure into results on the graph. Our related results on the bipartite configuration model are not only instrumental to the study of the random intersection graph with communities, but are also of independent interest, and shed light on interesting differences from the unipartite case.
\end{abstract}

\maketitle

\section{Introduction}
\label{s:introduction}
Real-world networks often exhibit a higher amount of clustering (also called transitivity, \cite[Chapter 7.9, 11]{Newm10}), i.e., a larger amount of triangles, than expected by pure chance \cite{WatStro98}. 
A possible explanation for this phenomenon are \emph{communities}: (small) subgraphs that are (significantly) denser than the network average. Such a community structure has been observed in several real-life networks \cite{GirNew02}, such as the Internet, collaboration networks and social networks. In particular, many networks are well-explained \cite{GuiLat04,GuiLat06} by an underlying (possibly hidden) structure of individuals and groups that the individuals are part of. 
This is the kind of model we focus on in this paper. 
The prime example is collaboration networks, such as the Internet movie database IMDb or the ArXiv, where the `individuals' are the actors and actresses or the authors, and the `groups' are the movies or articles they collaborate in. We can also model a social network in a similar fashion, giving `groups' the interpretation of families, common interests, workplaces or cities. 
We take most of our terminology from social networks, however the model we present below is more widely applicable, for any network that builds on a group structure.

Due to the complexity of real-world networks, they are often modeled using \emph{random graphs} \cite{Bol01,Dur07,JanLucRuc00}. Models are built to mimic some empirically observed properties of the network that we consider as defining features, such as degree structure, clustering, small-world property, etc. Then one may study further properties and processes of interest, such as network evolution and information or epidemic spreading processes, on the model to make predictions for real-life networks.

The traditional random graph model for networks with a group structure is the random intersection graph ($\RIG$) \cite{BalGer2009unifRIG,Bloz10,Bloz13,Bloz17,BloGodJawKurRyb15, DeiKet09,GodeJaw2003,New2003configRIG,Ryb2011unifRIG,Sing96, Yag16,YagMak12}. In this model, the underlying group structure mentioned above is represented by a bipartite graph, where the two partitions correspond to the individuals (people) and the groups (or attributes), and an edge represents a group membership. 
The group memberships, that is, connections in this bipartite graph, are random. Individuals are then connected in the intersection graph when they are together in a group, i.e., share at least one group as neighbor in the bipartite graph. As a result, the members of a group form a complete subgraph, though in some variations of the model this complete graph is thinned \cite{KarLeuwLes18,New2003configRIG}. 
An other direction of research focuses on networks where the building blocks are communities that have an arbitrary internal structure, however no overlap \cite{BallSirlTrap09,BallSirlTrap10, SteHofLeuw2016epidemic,SteHofLeuw2015}. 
In \cite{vdHKomVad18locArxiv}, we have introduced a generalization of the RIG model that also incorporates an \emph{arbitrary} internal structure for each group, which we call the random intersection graph with communities ($\RIGC$). The $\RIGC$ thus combines the efforts to model networks with overlapping communities, as well as using arbitrary communities as building blocks. The $\RIGC$ is applicable to real-life network data, while we also derive rigorous analytic results. 

In \cite{vdHKomVad18locArxiv}, we have studied ``local'' properties of the $\RIGC$ model, such as local weak convergence (convergence of subgraph counts), degrees, local clustering coefficient, and the overlapping structure of communities. In this paper, we instead focus on ``global'' properties, in particular the \emph{giant component problem} and \emph{percolation} (defined shortly), as well as the relation between ``local'' and ``global'' properties. 

The \emph{giant component problem} studies whether there exists a component containing a \emph{linear} proportion of vertices. It has garnered quite some interest in the random graph literature since the seminal work of Erd\H{o}s and R\'{e}nyi \cite{ErdosRenyi60evolution}, and has been studied on several other models (e.g.\ Chung-Lu model \cite{ChungLu02giant,ChungLu06giant} and configuration model \cite{BolRio15,JanLuc2009,MolRee95,MolRee98}, see also the survey by Spencer \cite{Spencer10giant} and the references therein). We prove that as we vary the parameters of the $\RIGC$, the size of the largest component undergoes a \emph{phase transition}: either all components are sublinear as the network size grows, or there exists a unique \emph{giant component} containing a constant fraction of the vertices, while the rest of the components are sublinear. When a giant component exists, we are able to further characterize it. 
To solve the giant component problem for the $\RIGC$, we also solve it for a bipartite version of the configuration model ($\BCM$), which is of independent interest.

\emph{Percolation} means keeping each edge of a graph independently with a fixed probability, and it has applications related to epidemiology and attack vulnerability of networks. We give a detailed introduction to percolation in \cref{sss:perc_intro}. We prove that percolation on the $\RIGC$ model can be represented as an $\RIGC$ with different parameters; we thus find that percolation on the $\RIGC$ undergoes a similar phase transition, and identify the critical point implicitly.

\smallskip
\paragraph*{\textbf{Main contribution and novelty of methodology}}
The main innovation of this paper is the methodology required to solve the giant component problem for the bipartite configuration model. We make a non-trivial adaptation of the continuous-time exploration algorithm of the configuration model, originating from Janson and Luczak \cite{JanLuc2009}, to the bipartite case. The analysis of the bipartite case involves studying a death process where jumps occasionally happen with \emph{infinite} rate.

Showing that percolation on the $\RIGC$ stays within the family of models suggests that the $\RIGC$ is also a natural way to study  percolation on the classical random intersection graph.

Finally, we combine the results on the giant component of the $\RIGC$ from the continuous-time exploration with the results of local weak convergence from \cite{vdHKomVad18locArxiv}, to obtain further properties of the giant component.

\smallskip
\paragraph*{\textbf{Organization of the paper}} 
In \cref{ss:RIGC_def,ss:asmp}, we introduce the model and our assumptions. In \cref{ss:results_giant_RIGC,ss:results_BCM,ss:results_perc}, we state our results on the largest component of the $\RIGC$, the largest component of the $\BCM$ and percolation on the $\RIGC$, respectively. In \cref{s:proof_giant}, we prove the results on the phase transition in the $\BCM$ as well as the $\RIGC$ using the adapted continuous-time exploration, and in particular, in \cref{ss:livingproof} we analyze the death process with occasional instantaneous jumps. In \cref{s:perc_proof}, we prove the phase transition of (bond) percolation in the $\RIGC$ as a consequence of the largest component phase transition. Finally, in \cref{s:local_global} we study further properties of the largest component of the $\RIGC$. 

\smallskip
\paragraph*{\textbf{Notational conventions}}
To study the asymptotic behavior of various quantities, we will consider a sequence of graphs and consequently, a sequence of input parameters, both indexed by $n\in\N$. We note that $n$ does not necessarily mean the size or any other parameter of the graph, and to keep the notation light, we often omit indicating the dependence on $n$, as long as it does not cause confusion. Throughout this paper, we denote the set of positive integers as $\Z^+$ and the set of non-negative integers as $\N$. The notions $\toinp$ and $\toindis$ stand for convergence in probability and convergence in distribution (weak convergence), respectively. We write $X\eqindis Y$ to mean that the random variables $X$ and $Y$ have the same distribution. For an $\N$-valued random variable $X$ such that $\E[X]<\infty$, we define its \emph{size-biased} distribution $X^\star$ and its shifted version $\wit X := X^\star-1$ with the following probability mass functions (pmf): for all $k\in\N$,
\beq\label{eq:def_sizebiasing} \P(X^\star = k) = k \,\P(X=k) / \E[X], 
\qquad \P(\wit X = k) = \P(X^\star - 1 = k). \eeq
For a random variable $X$ taking values in $\N$, we denote its  probability generating function by $G_X:[0,1]\to[0,1]$, given by
\beq\label{eq:def_genfunc} G_X(z):= \E\bigl[z^X\bigr] = \sum_{k=0}^\infty \P(X=k) z^k. \eeq
Note that $G_{X^\star}(z) = z G'_{X}(z)/\E[X]$ and $G_{\wit{X}}(z) = G_{X^\star}(z) / z = G'_X(z)/\E[X]$. We say that a sequence of events $(A_n)_{n\in\N}$ occurs with high probability (whp), when $\lim_{n\to\infty}\P(A_n)=1$. For two (possibly) random sequences $(X_n)_{n\in\N}$ and $(Y_n)_{n\in\N}$, we say that $X_n = o_{\sss\P}(Y_n)$ if $X_n/Y_n \toinp 0$ as $n\to\infty$. We say that the collection $(X_a)_{a\in\SA}$, indexed by a set $\SA$, of $\R$-valued random variables is uniformly integrable (UI) if $\lim_{K\to\infty} \sup_{a\in\SA} \E\bigl[ X_a \ind_{\abs{X_a}\geq K} \bigr] = 0$. 
We denote $[n]:=\{1,2,\ldots,n\}$ and the indicator of an event $A$ by $\ind_A$. For a graph $G$, we denote its vertex set by $\vertices(G)$ and its edge set by $\edges(G)$.

\smallskip
\paragraph*{\textbf{List of abbreviations}}
In this paper, we make use of the following abbreviations: 
without loss of generality (wlog), 
with respect to (wrt), 
independent and identically distributed (iid), 
uniformly at random (uar), 
probability mass function (pmf), 
with high probability (whp), 
left-hand side (lhs), 
right-hand side (rhs), 
uniformly integrable (UI), 
random intersection graph with communities (RIGC), 
bipartite configuration model (BCM), 
branching process (BP), 
random intersection graph (RIG), 
and configuration model (CM).

\FloatBarrier
\section{Model and results}
\label{s:RIGC}
In this section, we introduce our model and study some of its global properties.

\subsection{Definition of the random intersection graph with communities}
\label{ss:RIGC_def}
This section is a more concise transcription of the model definition from the companion paper \cite{vdHKomVad18locArxiv} on the local properties of the $\RIGC$ model. For more details on the construction, we direct the reader to that paper. Given the parameters $\bitd^\msl$, $\comvect$, whose meaning is explained below, we construct the random graph $\RIGC(\bitd^\msl,\comvect)$ in two steps. First, we construct the bipartite matching that determines the group memberships, then we show how to obtain the $\RIGC$ based on the group memberships.

\smallskip
\paragraph*{\textbf{The parameters}}
We start with a bipartite graph comprising of individuals and groups. We call the set of individuals the left-hand side (lhs) partition $\leftpar=[N_n]$, where $N_n\to\infty$ is the number of individuals. We may refer to an individual $v\in\leftpar$ as an $\msl$-vertex. We call its number of group memberships its $\msl$-degree (left-degree), and denote it by $d_v^\msl=\ldeg(v)$. The parameter $\bitd^\msl=(d_v^\msl)_{v\in\leftpar}$ is the vector of $\msl$-degrees. Without loss of generality (wlog), we assume that $\bitd^\msl\geq1$ (element-wise).

Analogously, we call the set of groups the right-hand side (rhs) partition $\rightpar=[M_n]$, where also the number of groups $M_n$ satisfies $M_n\to\infty$, and we may refer to groups as $\msr$-vertices. Each $\msr$-vertex $a$ is associated with a community graph $\com_a$, with properties explained below, and $\comvect=(\com_a)_{a\in\rightpar}$ is the vector of community graphs. Let $\comgraphs$ be the set of possible community graphs: simple, finite, connected graphs $H$, and we label each $H$ arbitrarily by $[\abs{H}]$, so that any two community graphs that are isomorphic are also labeled identically. We assume that each assigned community graph $\com_a\in\comgraphs$ satisfies $\abs{\com_a}\geq 1$. We call $\abs{\com_a}$ the $\msr$-degree (right-degree) of group $a$ and denote it by $d_a^\msr=\rdeg(a)$. We collect all $\msr$-degrees in the vector $\bitd^\msr := (d_a^\msr)_{a\in\rightpar}$.

\smallskip
\paragraph*{\textbf{Community memberships}}
In the bipartite graph of group memberships, the $\msl$- and $\msr$-degrees act as degrees. We refer to them together as $\msb$-degrees (bipartite degrees). To ensure the existence of a bipartite graph with these given degrees, we assume and denote
\beq\label{eq:def_halfedges} \he_n := \sum_{v\in\leftpar} d_v^\msl = \sum_{a\in\rightpar} d_a^\msr. \eeq
With the given $\msb$-degrees, we construct the group memberships according to a bipartite matching, described as follows. 
Denote the disjoint union of all vertices in community graphs by $\vertices(\comvect)$, which we call the set of community vertices (community roles). To each $\msr$-vertex $a$, we assign $\rdeg(a)$ $\msr$-half-edges, labeled by $(a,l)_{l\in[\rdeg(a)]}$. We think of $(a,l)$ as the membership token corresponding to the community vertex $j\in\vertices(\comvect)$ in $\com_a$ with label $l$, which gives a natural correspondence between the set of $\msr$-half-edges and $\vertices(\comvect)$. 
To each $\msl$-vertex $v$, we assign $\ldeg(v)$ $\msl$-half-edges, labeled by $(v,i)_{i\in[\ldeg(v)]}$. In contrast to $\msr$-half-edges, the $\msl$-half-edges incident to the same $\msl$-vertex are interchangeable as membership tokens.

Denote by $\bmatchset_n$ the set of all bijections between the set of $\msl$-half-edges $(v,i)_{i\in[\ldeg(v)],v\in\leftpar}$ to the set of $\msr$-half-edges $(a,l)_{l\in[\rdeg(a)],a\in\rightpar}$. (Equivalently, bijections between the set of $\msl$-half-edges and $\vertices(\comvect)$). Let $\bmatch_n\sim\Unif[\bmatchset_n]$ denote a bipartite matching (or bipartite configuration) chosen uniformly at random (uar).\footnote{Note that, by re-indexing the half-edges, we can think of $\bmatch_n$ as a permutation of $[\he_n]$, thus $\abs{\bmatchset_n} = \he_n!$.} 
If the $\msl$-half-edge $(v,i)$ and the $\msr$-half-edge $(a,l)$ are paired by $\bmatch_n$, this intuitively means one of the community roles taken by $v$ is the community vertex $j\in\vertices(\comvect)$ in $\com_a$ with label $l$, and we denote the indicator of this event by $\ind_{\{v \comrole j\}} = \ind_{\{v \comrole j\}}(\bmatch_n)$. Note that (almost surely)
\beq \sum_{v\in\leftpar} \ind_{\{v \comrole j\}} = 1,
\quad \sum_{j\in\vertices(\comvect)} \ind_{\{v \comrole j\}} = \ldeg(v). \eeq
The community memberships of the $\RIGC$ model (see \cref{fig:matching}) are determined by the uniform(ly random) bipartite matching $\bmatch_n$ in the above fashion. Before we define the $\RIGC$ graph based on the community memberships, we make some observations about $\bmatch_n$.

\begin{figure}[hbt]
\centering
\begin{subfigure}[b]{\textwidth}
	\centering
	\includegraphics[width=0.7\textwidth]{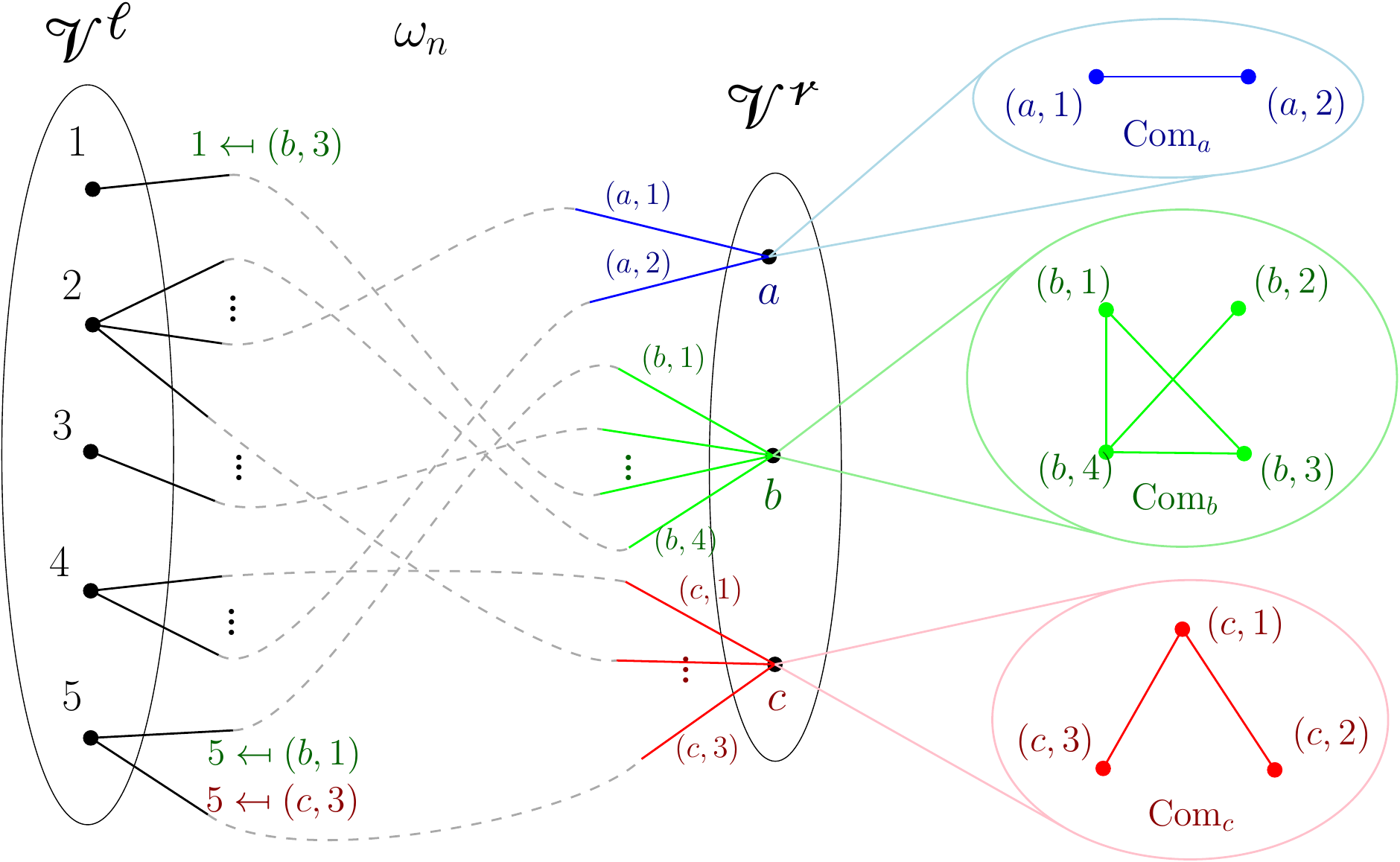}
	\caption{\emph{Community roles assigned by the bipartite matching}}
	\label{fig:matching}
\end{subfigure}\\
\begin{subfigure}[b]{0.75\textwidth}
	\centering
	\begin{subfigure}[b]{0.32\textwidth}
  	\includegraphics[width=0.95\textwidth]{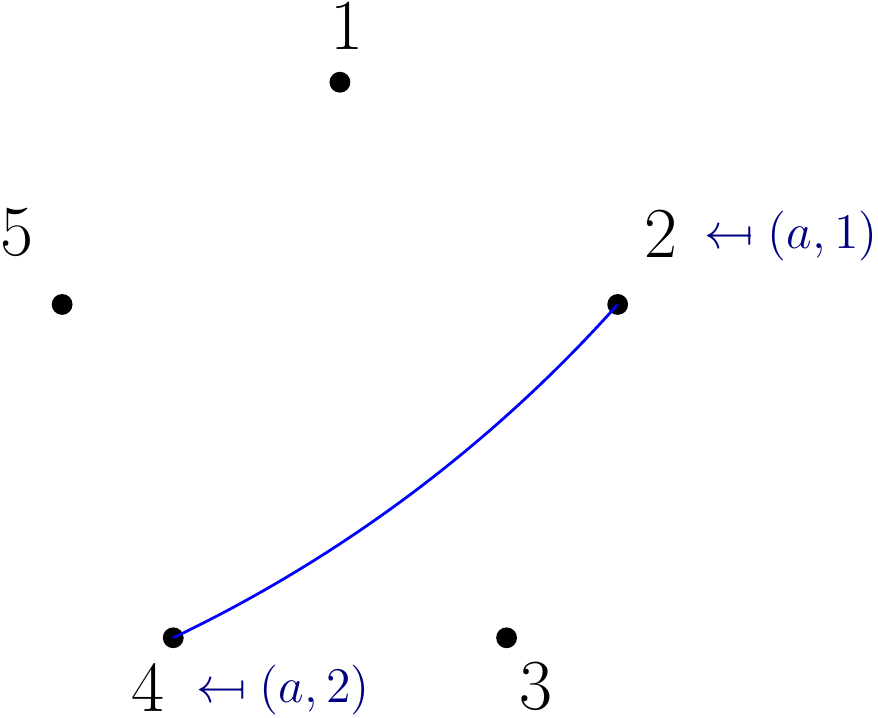}
  \end{subfigure}
 	\begin{subfigure}[b]{0.32\textwidth}
  	\includegraphics[width=0.95\textwidth]{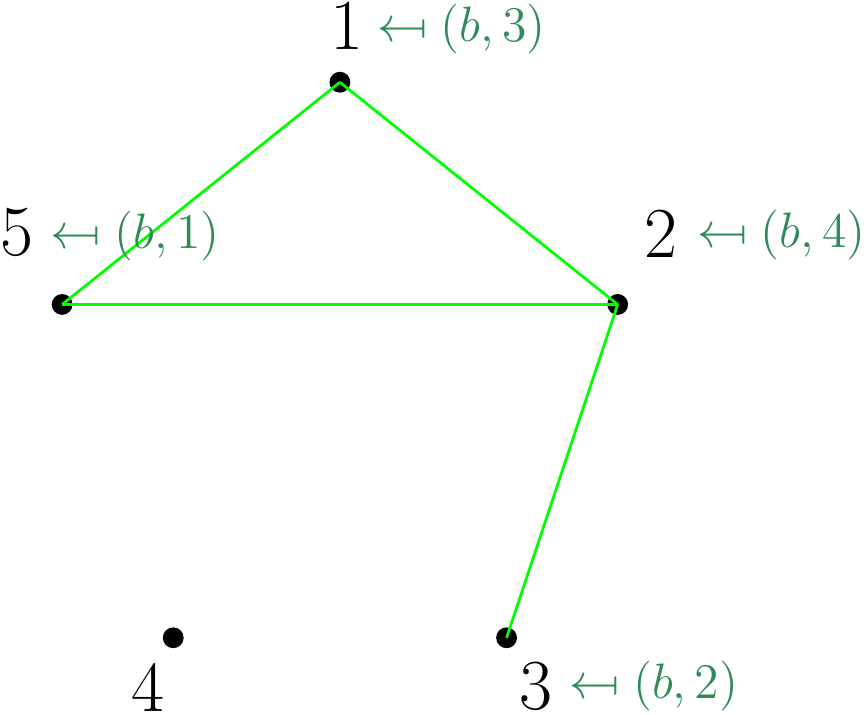}
  \end{subfigure}
 	\begin{subfigure}[b]{0.32\textwidth}
  	\includegraphics[width=0.95\textwidth]{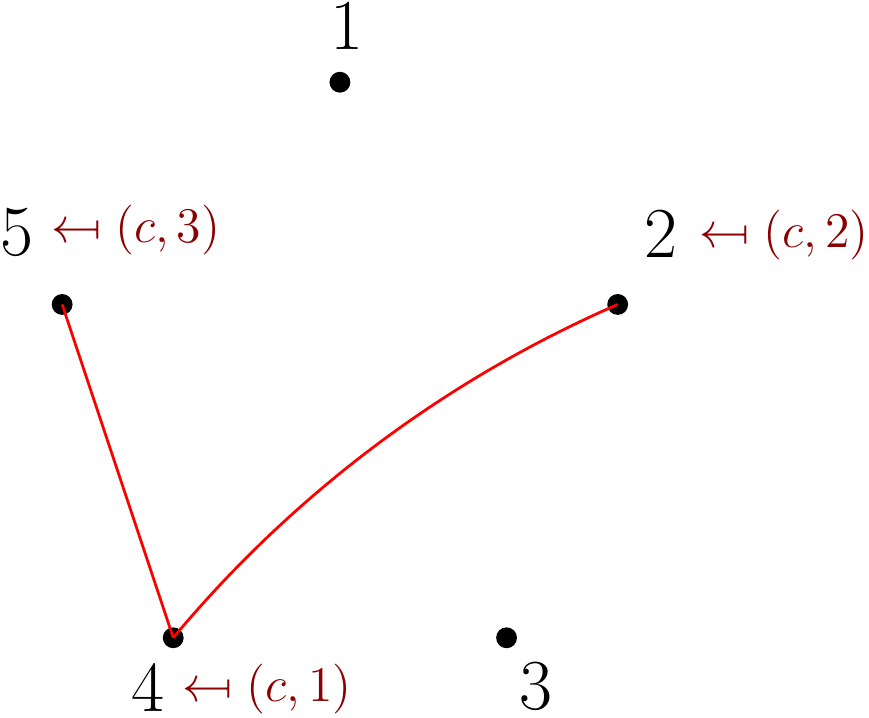}
  \end{subfigure}
  \caption{\emph{The projection of each community}\\ Each edge between community roles is copied to the corresponding \\individuals (which are assigned the community roles forming the edge)}
\end{subfigure}
\begin{subfigure}[b]{0.24\textwidth}
  \centering
 	\includegraphics[width=0.95\textwidth]{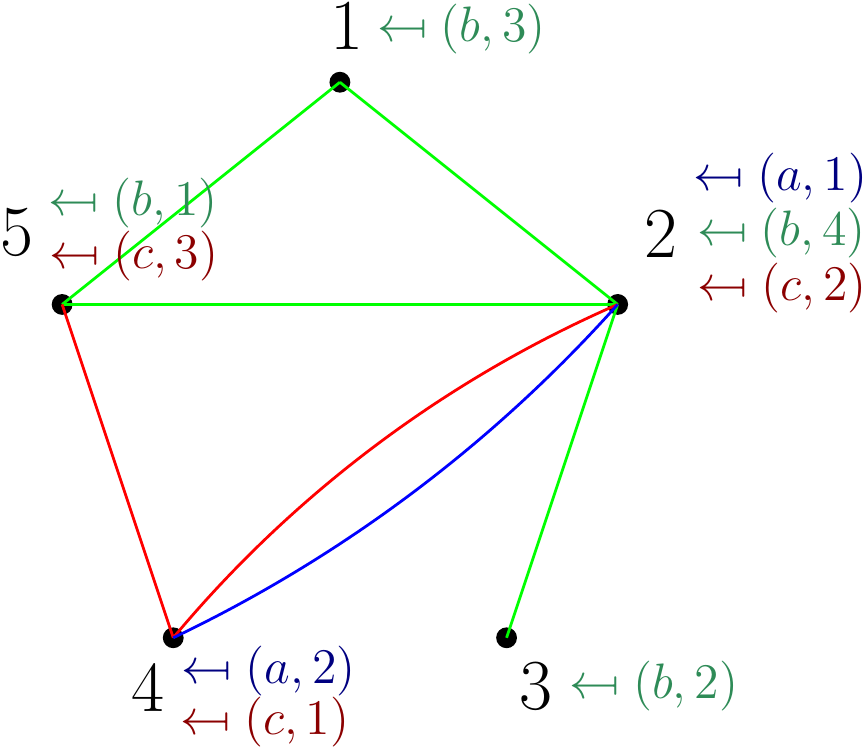}
  \caption{\emph{The resulting $\RIGC$}\\ The multigraph combining the communities}
\end{subfigure}
\caption{Construction of the RIGC graph.}
\label{fig:embedding}
\end{figure}

\begin{remark}[Algorithmic pairing, {\cite[Remark 2.1]{vdHKomVad18locArxiv}}]\label{rem:pairingalgo}
The uniform bipartite matching $\bmatch_n$ can be produced sequentially, as follows. In each step, we pick an arbitrary unpaired half-edge, and match it to a uniform unpaired half-edge in the opposite partition. As the choices are arbitrary, they may even depend on the past of the pairing process.
\end{remark}

\begin{remark}[The underlying BCM, {\cite[Definition 2.2]{vdHKomVad18locArxiv}}]\label{rem:bcm}
We may view the half-edges as tokens to form edges (rather than group membership tokens as above), as usual in the configuration model. Then the bipartite matching $\bmatch_n$ also determines a bipartite (multi)graph, defined as follows. If the $\msl$-half-edge $(v,i)$ and the $\msr$-half-edge $(a,l)$ are matched, we replace them by an edge labeled by $(i,l)$ between $v\in\leftpar$ and $a\in\rightpar$. 
Note that with the edge labels, we can recover the matched half-edges, thus this multigraph provides an equivalent representation of $\bmatch_n$. Thus we refer to this labeled bipartite (multi)graph as the \emph{underlying bipartite configuration model}.

Deleting the edge labels introduced above, we obtain the (classical) bipartite configuration model $\BCM(\bitd^\msl,\bitd^\msr)$ with degree sequences $\bitd^\msl$ and $\bitd^\msr$.
\end{remark}

\smallskip
\paragraph*{\textbf{Community-projection}}
In the following, we complete the construction of the $\RIGC$, based on the community structure defined by $\bmatch_n$. This step is entirely deterministic and we can think of it as an operator $\SP$ from $\bmatchset_n$ into the set of \emph{multigraphs}.\footnote{For a discussion on why multigraphs arise and why we have chosen to work with them, see the companion paper \cite[Section 2.4, p.\ 13]{vdHKomVad18locArxiv}.} In the following, we define the $\RIGC$ by its (random, $\bmatch_n$-dependent) edge multiplicities $(X(v,w))_{v,w\in\leftpar}$.

Recall that when an individual $v\in\leftpar$ takes on community role $j\in\vertices(\comvect)$, we write $v \comrole j$. Let us denote the disjoint union of edges in all community graphs by $\edges(\comvect)$ that we refer to as the set of community edges. Intuitively, we construct the $\RIGC$ by copying each community edge $(j_1,j_2)$ to the individuals $(v,w)$ such that $v \comrole j_1$ and $w \comrole j_2$. Formally, we define the multiplicity of the edge $(v,w)$ as
\beq\label{eq:def_edge_multiplicities} X(v,w) = X(v,w;\bmatch_n) := \sum_{(j_1,j_2)\in\edges(\comvect)} \ind_{\{v \comrole j_1, w \comrole j_2\} \cup \{v \comrole j_2, w \comrole j_1\}}. \eeq
Then, the (random) degree of $v\in\leftpar$ in the resulting $\RIGC$, sometimes referred to as projected degree ($\msp$-degree) for clarification, is given by
\beq\label{eq:def_deg} d_v^\msp = \pdeg(v) := 
X(v,v) + \sum_{w\in\leftpar} X(v,w) 
= 2 X(v,v) + \sum_{w\in\leftpar,w\neq v} X(v,w). \eeq

\FloatBarrier
\subsection{Assumptions on the parameters}
\label{ss:asmp}
In this section, we introduce some notation and state the assumptions necessary for our results in \cref{ss:results_giant_RIGC,ss:results_perc}. We remark that the notation and assumptions are identical to those introduced in \cite[Section 2.2]{vdHKomVad18locArxiv}.

\smallskip
\paragraph*{\textbf{The bipartite degrees}}
We define uniformly chosen $\msl$- and $\msr$-vertices
\beq\label{eq:def_unifvertex} V_n^\msl \sim \Unif[\leftpar],
\qquad V_n^\msr \sim \Unif[\rightpar], \eeq
and denote their degrees by
\beq\label{eq:def_ldeg_rdeg} D_n^\msl := \ldeg\bigl(V_n^\msl\bigr),
\qquad D_n^\msr := \rdeg(V_n^\msr). \eeq
Further, denote the sets of $\msl$-vertices and $\msr$-vertices with degree $k$, respectively, by
\beq\label{eq:def_Vk}
\vertices_k^\msl := \{ v\in\leftpar:\, \ldeg(v)=k \},
\qquad
\vertices_k^\msr := \{ a\in\rightpar:\, \rdeg(a)=k \}.
\eeq
Then the following probability mass functions (pmf), for $k\in\Z^+$,
\beq\label{eq:def_pq_pmf} \nth p_k := \abs{\leftpar_k}/N_n, 
\qquad \nth q_k := \abs{\rightpar_k}/M_n, \eeq
describe the distribution of the variables $D_n^\msl$ and $D_n^\msr$, as well as the empirical distribution of $\bitd^\msl$ and $\bitd^\msr$, respectively. We collect the pmfs in the (infinite-dimensional) probability vectors $\nth\bitp = (\nth p_k)_{k\in\Z^+}$, $\nth\bitq = (\nth q_k)_{k\in\Z^+}$.

\smallskip
\paragraph*{\textbf{The empirical community distribution}}
Recall that the possible community graphs are $\comgraphs$, the set of simple, finite, connected graphs $H$ with an arbitrary but fixed labeling by $[\abs{H}]$. For a fixed $H\in\comgraphs$, define
\beq\label{eq:def_VH} \vertices_H^\msr := \{a\in\rightpar:\, \com_a=H\}. \eeq
We introduce the pmf
\beq\label{eq:def_mu} \nth \mu_H := \abs{\vertices_H^\msr}/M_n, \qquad \nth {\bm\mu} = (\nth \mu_H)_{H\in\comgraphs}. \eeq
Thus $\nth{\bm\mu}$ describes the empirical pmf of $\comvect$, as well as the pmf of $\com_{V_n^\msr}$, with $V_n^\msr\sim\Unif[\rightpar]$. Define
\beq\label{eq:def_Hk} \comgraphs_k := \bigl\{ H\in\comgraphs:\, \abs{H}=k \bigr\}. \eeq
Note that $\nth q_k = \sum_{H\in\comgraphs_k} \nth\mu_H$ (with $\nn q_k$ from \eqref{eq:def_pq_pmf}).

\smallskip
\paragraph*{\textbf{The community degrees}}
For $a\in\rightpar$ and $j\in\vertices(\com_a)$, define the community degree ($\msc$-degree) of $j$, denoted by $d_j^\msc$, as the number of connections $j$ has within $\com_a$. Recall that $\vertices(\comvect)$ denotes the disjoint union of vertices in all communities, and let $J_n\distr\Unif[\vertices(\comvect)]$ be a uniformly chosen community vertex, among all $\he_n$ possibilities.\footnote{This is equivalent to choosing $\com_a$ in a \emph{size-biased fashion}, i.e., picking $\com_a$ with probability $\abs{\com_a}/\he_n$, then picking $J_n \cond \com_a \distr \Unif[\vertices(\com_a)]$.} 

Introduce the random variable $D_n^\msc := d_{J_n}^\msc$. We define the pmf that describes $D_n^\msc$ as well as the empirical distribution of the collection $(d_{j}^\msc)_{j\in\vertices(\comvect)}$, by
\beq\label{eq:def_rho} \nth \ro_{k} := \frac1{\he_n} \sum_{j\in\vertices(\comvect)} \ind_{ \{ d_{j}^\msc = k \} },
\qquad \nth{\bm\ro} := \bigl( \nth \ro_{k} \bigr)_{k\in\Z^+}. \eeq 

\paragraph*{\textbf{Assumptions}}
\label{sss:assumptions}
Recall \cref{eq:def_ldeg_rdeg,eq:def_pq_pmf,eq:def_mu}. We can now summarize our assumptions on the model parameters:

\begin{assumption}[{\cite[Assumption 2.3]{vdHKomVad18locArxiv}}]
\label{asmp:convergence} The conditions for the empirical distributions are summarized as follows:
\begin{enumerateA}
\item\label{cond:limit_ldeg} There exists a random variable $D^\msl$ with pmf $\bitp$ s.t.\ $\nth\bitp \to \bitp$ pointwise as $n\to\infty$, i.e.,
\beq D_n^\msl \toindis D^\msl. \eeq
\item\label{cond:mean_ldeg} $\E[D^\msl]$ is finite, and as $n\to\infty$,
\beq \E[D_n^\msl] \to \E[D^\msl]. \eeq
\item\label{cond:limit_com} There exists a probability mass function $\bm\mu$ on $\comgraphs$ such that $\nth{\bm\mu}\to\bm\mu$ pointwise as $n\to\infty$.
\begin{enumeraten}
\item\label{cond:limit_rdeg} Consequently, by $\nth q_k=\sum_{H\in\comgraphs_k} \nth\mu_H$, with the finite set $\comgraphs_k$ from \eqref{eq:def_Hk}, there exists a random variable $D^\msr$ with pmf $\bitq$ such that $\nth\bitq\to\bitq$ pointwise as $n\to\infty$, or equivalently,
\beq D_n^\msr\toindis D^\msr. \eeq
\end{enumeraten}
\item\label{cond:mean_rdeg} $\E[D^\msr]$ is finite, and as $n\to\infty$,
\beq \E[D_n^\msr] \to \E[D^\msr]. \eeq
\end{enumerateA}
\end{assumption}

\begin{remark}[Consequences of \cref{asmp:convergence}, {\cite[Remark 2.4]{vdHKomVad18locArxiv}}]
\label{rem:asmp_consequences}
\normalfont
We note the following:
\begin{enumeratei}
\item\label{cond:partition_ratio} Recall $N_n = \abs{\leftpar}$ and $M_n = \abs{\rightpar}$. By its definition in \eqref{eq:def_halfedges}, $\he_n = N_n \E[D_n^\msl] = M_n \E[D_n^\msr]$. By \cref{asmp:convergence} (\ref{cond:mean_ldeg},\ref{cond:mean_rdeg}), 
\beq\label{eq:mnratio} M_n / N_n = \E[D_n^\msl] / \E[D_n^\msr] \to \E[D^\msl] / \E[D^\msr] =: \gamma \in \R^+. \eeq
\item\label{cond:limit_cdeg} Since $\nth{\bm\ro}$ (see \eqref{eq:def_rho}) can be obtained from $\nth{\bm\mu}$, \cref{asmp:convergence} \cref{cond:limit_com} also implies that there exists a random variable $D^\msc$ with pmf $\bm\ro$ such that $\nth{\bm\ro}\to\bm\ro$ pointwise as $n\to\infty$, or equivalently, $D_n^\msc \toindis D^\msc$.
\item\label{cond:dmax} \Cref{asmp:convergence} (\ref{cond:limit_ldeg},\ref{cond:mean_ldeg}) imply that $\dmax{\msl} := \max_{v\in\leftpar} d_v^\msl = o(\he_n)$, and similarly, conditions (\ref{cond:limit_rdeg}-\ref{cond:mean_rdeg}) imply that $\dmax{\msr} := \max_{a\in\rightpar} d_a^\msr = o(\he_n)$.
\end{enumeratei}
\end{remark}

\begin{remark}[Random parameters, {\cite[Remark 2.5]{vdHKomVad18locArxiv}}]\label{rem:randomparam} 
The results in \cref{ss:results_giant_RIGC} below remain valid when the sequence of parameters $(\bitd^\msl,\comvect)$ (resp., $(\bitd^\msl,\bitd^\msr)$) is random itself. In this case, we require that $N_n\toinp\infty$ and $M_n\toinp\infty$,\footnote{By $N_n \toinp \infty$, we mean that for all $K\in\R^+$, $\P(N_n > K) \to 1$ as $n\to\infty$.} and we replace \cref{asmp:convergence} {\rm(\ref{cond:limit_ldeg}-\ref{cond:mean_rdeg})} (resp., \cref{asmp:convergence} {\rm(\ref{cond:limit_ldeg},\ref{cond:mean_ldeg},\ref{cond:limit_rdeg},\ref{cond:mean_rdeg})}) by the conditions 
$\nth{\bitp}\toinp\bitp$ pointwise, $\E[D_n^\msl \cond \bitd^\msl] \toinp \E[D^\msl]$, $\nth{\bm\mu}\toinp\bm\mu$ pointwise (resp., $\nth{\bitq}\toinp\bitq$) and $\E[D_n^\msr \cond \bitd^\msr] \toinp \E[D^\msr]$. For a similar setting in the configuration model, see \cite[Remark 7.9]{rvHofRGCNvol1}, where this is spelled out in more detail.
\end{remark}

Note that analogously to \cref{rem:asmp_consequences} \cref{cond:partition_ratio}, 
the conditions of \cref{rem:randomparam} imply that $M_n/N_n \toinp \gamma$.

\subsection{Results on the largest component of the random intersection graph with communities}
\label{ss:results_giant_RIGC}
In this paper, we study global properties of the $\RIGC$ model; its local properties have been studied in the companion paper \cite{vdHKomVad18locArxiv}. In particular, in this section we study the largest connected component of the $\RIGC$. We prove a phase transition in the size of the largest component in terms of the model parameters, and explicitly identify the conditions under which a unique linear-sized component exists. We study further properties of this component, i.e., its degree distribution and its number of edges. Denote the largest connected component (the component containing the most $\msl$-vertices, breaking ties arbitrarily) by $\comp_1 = \nn\comp_1$, and the second largest by $\comp_2 = \nn\comp_2$. Recall \eqref{eq:def_sizebiasing}, \eqref{eq:def_genfunc}, \eqref{eq:def_ldeg_rdeg} and \eqref{eq:def_pq_pmf}.

\begin{theorem}[Size of the largest component]\label{thm:giantcomp}
Consider $\RIGC(\bitd^\msl,\comvect)$ under \cref{asmp:convergence}, and further assume that $p_2+q_2<2$. 
Then, there exists $\eta_\msl \in [0,1]$, the smallest solution of the fixed point equation
\beq\label{eq:fixedpoint} \eta_\msl = G_{\wit D^\msr}\bigl( G_{\wit D^\msl} (\eta_\msl) \bigr), \eeq
and $\xi_\msl := 1-G_{D^\msl}(\eta_\msl) \in [0,1]$ such that
\beq\label{eq:rigsize} \abs{\comp_1}/N_n \toinp \xi_\msl. \eeq
Furthermore, $\xi_\msl > 0$ exactly when
\beq\label{cond:supercrit} \E[\wit D^\msl] \E[\wit D^\msr] > 1, \eeq
which we call the \emph{supercritical case}. In this case, $\comp_1$ is unique in the sense that $\abs{\comp_2} = o_{\sss\P}(N_n)$, and we call $\comp_1$ the \emph{giant component}.
\end{theorem}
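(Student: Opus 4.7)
The plan is to reduce the $\RIGC$ connected component analysis to that of the underlying $\BCM(\bitd^\msl,\bitd^\msr)$ (see \cref{rem:bcm}), and then invoke the corresponding phase-transition theorem for the $\BCM$ stated in \cref{ss:results_BCM}. The crucial structural observation is that every community graph $\com_a \in \comgraphs$ is, by definition, connected. Consequently, for each group $a \in \rightpar$, all $\msl$-vertices that are matched to $a$ lie in the same $\RIGC$-component, because one can walk between any two community roles along $\com_a$ and project each edge to the corresponding $\msl$-vertices assigned to those roles. Conversely, every $\RIGC$-edge $(v,w)$ arises from some community edge of some $\com_a$ whose endpoints are assigned to $v$ and $w$, so $v$ and $w$ share the $\BCM$-neighbor $a$. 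Combining these two observations, two $\msl$-vertices lie in the same $\RIGC$-component if and only if they lie in the same $\BCM$-component, and hence $|\nn\comp_k|$ equals the number of $\msl$-vertices in the $k$-th largest $\BCM$-component (ties broken by the number of $\msl$-vertices, consistent with the definition in the theorem statement).

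Given this reduction, I would simply invoke the $\BCM$ giant component theorem from \cref{ss:results_BCM}: it identifies the fraction of $\msl$-vertices in the largest $\BCM$-component as $\xi_\msl = 1 - G_{D^\msl}(\eta_\msl)$ with $\eta_\msl$ the smallest solution of \eqref{eq:fixedpoint} in $[0,1]$, and yields $|\nn\comp_2| = o_{\sss\P}(N_n)$ from the corresponding $\BCM$-level uniqueness. The probabilistic content of $\eta_\msl$ is captured by a two-type branching process: exploring a matched $\msl$-half-edge lands in a size-biased community with $\wit D^\msr$ further $\msr$-half-edges, each of which is matched to an $\msl$-vertex carrying $\wit D^\msl$ further $\msl$-half-edges. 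Hence the offspring generating function of the associated $\msl$-half-edge tree is $G_{\wit D^\msr} \circ G_{\wit D^\msl}$, and its extinction probability is the smallest fixed point $\eta_\msl$; the root $\msl$-vertex has $D^\msl$ independent $\msl$-half-edges, giving survival probability $1 - G_{D^\msl}(\eta_\msl) = \xi_\msl$.

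To pass from the fixed-point equation to the dichotomy \eqref{cond:supercrit}, I would invoke the standard convexity argument for probability generating functions. The map $\Phi(z) := G_{\wit D^\msr}(G_{\wit D^\msl}(z))$ is increasing and convex on $[0,1]$ with $\Phi(1)=1$ and, by the chain rule, $\Phi'(1) = \E[\wit D^\msl]\,\E[\wit D^\msr]$. Under the hypothesis $p_2+q_2<2$, at least one of $\wit D^\msl,\wit D^\msr$ is non-constant, so $\Phi$ is \emph{strictly} convex, and $\Phi(z)=z$ has a solution in $[0,1)$ precisely when $\Phi'(1)>1$, giving $\xi_\msl > 0$ exactly under \eqref{cond:supercrit}. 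The purpose of the hypothesis $p_2+q_2<2$ is precisely to rule out the degenerate case $D^\msl\equiv D^\msr\equiv 2$, in which $\Phi$ is the identity and every $\eta\in[0,1]$ is a fixed point of \eqref{eq:fixedpoint}.

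The main obstacle, and the novelty of the paper, lies in proving the $\BCM$ giant component theorem itself, carried out in \cref{s:proof_giant,ss:livingproof}. In contrast with the unipartite configuration model treated by Janson--Luczak \cite{JanLuc2009}, the natural continuous-time exploration here alternates between $\msl$- and $\msr$-half-edges: once an $\msl$-half-edge is uncovered, the entire $\msr$-neighborhood it exposes (namely the community with $\wit D^\msr$ remaining $\msr$-half-edges) must be uncovered as well, producing a death process with occasional \emph{instantaneous} (infinite-rate) jumps. Controlling this hybrid death process, extracting a deterministic fluid limit, and matching its first hitting time of zero to $\xi_\msl$ is the core analytical challenge, on top of which the $\RIGC$ statement rests by the projection argument above.
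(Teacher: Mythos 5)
Your reduction to the underlying $\BCM$ via connectedness of every community graph is exactly the route the paper takes, and invoking the $\BCM$ giant component theorem (\cref{thm:BCM_giantcomp}) is indeed how the paper closes the argument; the branching-process heuristic for $\eta_\msl$ and the GW dichotomy are likewise what the paper develops in \cref{ss:BPapprox}.

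Two small inaccuracies in your writeup, neither fatal, are worth flagging. First, it is \emph{not} true in general that $\lvert\nn{\comp}_k\rvert$ equals the $\msl$-vertex count of the $k$-th largest $\BCM$ component: the $\BCM$ components are ordered by \emph{total} vertex count while the $\RIGC$ components are ordered by $\msl$-vertex count, and these orderings can genuinely diverge (not merely tie), as the paper explicitly cautions after introducing the correspondence. The reason the argument still works, and what the paper makes precise, is that in the supercritical case the $\BCM$ giant has $\Omega(N_n)$ $\msl$-vertices while \emph{every other} $\BCM$ component has $o_{\sss\P}(N_n)$ total vertices (hence also $o_{\sss\P}(N_n)$ $\msl$-vertices); so whp $\comp_1 = \comp_{1,\msb}\cap\leftpar$ and $\abs{\comp_2}=o_{\sss\P}(N_n)$ regardless of how the small components reorder. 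Second, the implication ``$p_2+q_2<2$ implies at least one of $\wit D^\msl,\wit D^\msr$ is non-constant'' is false (take $D^\msl\equiv D^\msr\equiv 3$), and ``at least one non-constant $\Rightarrow$ strictly convex'' is also not tight. The correct and sufficient observation is that $p_2+q_2<2$ is equivalent to $\P(N^\msr=1)<1$ with $N^\msr$ from \eqref{eq:def_Nr}, i.e.\ the composed generating function is not the identity, and then the standard GW extinction theorem gives exactly the claimed dichotomy.
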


We prove \cref{thm:giantcomp} in \cref{s:proof_giant} subject to the upcoming \cref{thm:BCM_giantcomp}, and we discuss the relevance of the condition $p_2+q_2<2$ in \cref{ss:discussion}. Note that the size of the largest connected component only depends on $D^\msl$ and $D^\msr=\abs{\rg}$, where $\rg\in\comgraphs$ follows distribution $\bm\mu$; this is because the communities are connected. Consequently \cref{thm:giantcomp} applies to the classical $\RIG$, which is the special case of $\RIGC$ with complete graph communities. We continue by studying the degree distribution and the number of edges in the giant component. Naturally, these quantities depend more sensitively on $\bm\mu$ and are non-trivial: 
our results show that the degree distribution in the giant component is considerably different from the degree distribution of the whole graph (unless $\xi_\msl=1$ and the giant component contains almost all vertices). The reason for this is a size-biasing effect of the giant (see \cref{thm:BCM_giantcomp,cor:BCM_giantcomp} below). Recall \eqref{eq:def_deg} and \eqref{eq:def_Vk} and define 
\beq\label{eq:def_Dk} \vertices_d^\msp := \bigl\{ v\in\leftpar:\,\pdeg(v)=d \bigr\}. \eeq
For $H\in\comgraphs$ and $c\in\Z^+$, define $\nu(c \countin H) := \abs{\{ j\in\vertices(H):\, \cdeg(j) = c \}}$, the number of vertices in $H$ with $\msc$-degree $c$.

\begin{theorem}[Degrees in the giant]\label{thm:degrees_giant} Consider $\RIGC(\bitd^\msl,\comvect)$ under \cref{asmp:convergence}, \allowbreak additionally assuming the supercriticality condition \cref{cond:supercrit}. Define $\eta_\msr := G_{\wit D^\msl}(\eta_\msl)$, with $\eta_\msl$ from \cref{thm:giantcomp}. For $k\in\Z^+,d\in\N$, define
\beq\label{eq:def_deggiant_distr} A(k,d) := 
p_k \sum_{H_1,\ldots,H_k\in\comgraphs} \sum_{\substack{c_1,\ldots,c_k\in\N\\ c_1+\ldots+c_k=d}} 
\Bigl( 1 - \eta_\msr^{ \sum_{i=1}^k (\abs{H_i}-1)} \Bigr)
\prod_{i=1}^k \frac{\nu(c_i \countin H_i) \mu_{H_i}}{\E[D^\msr]}. \eeq
Then, the proportion of individuals that have $k$ group memberships, total degree $d$ and are in the giant component converges as $n\to\infty$: 
\beq\label{eq:deggiant_distr} \frac{ \abs{\vertices_k^\msl\cap\vertices_d^\msp\cap\comp_1} }{N_n} 
\toinp A(k,d). \eeq
\end{theorem}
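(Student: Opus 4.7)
The plan is to decompose the indicator $\ind\{v\in\vertices_k^\msl\cap\vertices_d^\msp\cap\comp_1\}$ into two asymptotically independent pieces: the \emph{local structure} of $v$ (its $\msl$-degree together with the community types and roles it joins), and the \emph{global event} $\{v\in\comp_1\}$. Summing the product of their limiting probabilities over the appropriate indices will produce $A(k,d)$, and a variance bound will upgrade to convergence in probability.

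\emph{Local structure.} First I would analyze the joint law, under the uniform bipartite matching $\bmatch_n$, of $\ldeg(v)$ together with the tuple $(\com_{a_i},\cdeg(j_i))_{i=1}^{\ldeg(v)}$ of communities and community roles $v$ joins through its $\ldeg(v)$ $\msl$-half-edges. For $v\in\vertices_k^\msl$, the $k$ matched $\msr$-half-edges form a uniform ordered tuple of distinct half-edges out of $\he_n$, and each $\msr$-half-edge deterministically encodes a community type and an in-community label. A direct counting argument, using $\dmax{\msl},\dmax{\msr}=o(\he_n)$ from \cref{rem:asmp_consequences} \cref{cond:dmax}, gives that the probability that the community types are $(H_1,\dots,H_k)$ with role $\msc$-degrees $(c_1,\dots,c_k)$ converges to $\prod_{i=1}^k \nu(c_i\countin H_i)\,\mu_{H_i}/\E[D^\msr]$. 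Combined with $\nn p_k\to p_k$ from \cref{asmp:convergence} \cref{cond:limit_ldeg} and the deterministic identity $\pdeg(v)=\sum_{i=1}^k\cdeg(j_i)$ implied by \eqref{eq:def_edge_multiplicities}--\eqref{eq:def_deg}, this yields exactly the pre-factor of $1-\eta_\msr^{\sum_i(\abs{H_i}-1)}$ appearing in \eqref{eq:def_deggiant_distr}.

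\emph{Global event.} For the event $\{v\in\comp_1\}$, I would exploit that the community graphs are connected, so $v\in\comp_1^{\RIGC}$ if and only if $v$ lies in the largest $\msl$-component of the underlying $\BCM$ from \cref{rem:bcm}. Conditional on $v$'s communities having types $(H_1,\dots,H_k)$, $v$ fails to be in the giant exactly when, for every $i$, each of the $\abs{H_i}-1$ remaining $\msr$-half-edges in community $a_i$ initiates a finite exploration in the BCM. By \cref{thm:BCM_giantcomp} and the continuous-time exploration developed in \cref{ss:livingproof}, these $\sum_i(\abs{H_i}-1)$ explorations are asymptotically independent copies of a branching process with extinction probability $\eta_\msr=G_{\wit D^\msl}(\eta_\msl)$; hence the conditional probability $\P(v\notin\comp_1\mid (H_i))$ converges to $\eta_\msr^{\sum_{i=1}^k(\abs{H_i}-1)}$. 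Crucially, this limit depends only on $(H_i)$ and not on $(c_i)$, which is precisely why the sum over $(c_i)$ with $\sum_i c_i=d$ in \eqref{eq:def_deggiant_distr} factorizes cleanly from the extinction factor.

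\emph{Combining and concentration.} Multiplying the two ingredients and summing over $(H_1,\ldots,H_k)\in\comgraphs^k$ and over $(c_1,\ldots,c_k)$ with $\sum_i c_i=d$ yields $A(k,d)$ as the limit of $\E[\abs{\vertices_k^\msl\cap\vertices_d^\msp\cap\comp_1}/N_n]$. To upgrade expectation to $\toinp$, I would run a second-moment computation: for distinct $v,w$ sampled uniformly in $\leftpar$, the neighborhoods explored when testing $v\in\comp_1$ and $w\in\comp_1$ are asymptotically disjoint, so the pair probability factorizes as the square of the single-vertex probability, and the variance tends to $0$. The main obstacle is the rigorous justification of the asymptotic independence of the local and global pieces conditional on the community types; I would handle this either by augmenting the continuous-time exploration of \cref{ss:livingproof} so that, for a tagged vertex $v$, the exploration records both the types $\com_{a_i}$ and the in-community labels of $v$'s first-step neighbors, or alternatively by deducing the statement by lifting the BCM degree-in-the-giant corollary \cref{cor:BCM_giantcomp} through the deterministic community projection $\SP$ from \cref{ss:RIGC_def}.
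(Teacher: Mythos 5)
Your decomposition into local structure and giant membership is the right way to think about the formula $A(k,d)$, and the counting argument for the local piece is fine. However, the crux of your argument is the claim
\[
\P\bigl(v\notin\comp_1 \bcond (\com_{a_i})_{i=1}^k = (H_i)_{i=1}^k\bigr)\ \to\ \eta_\msr^{\sum_{i=1}^k(\abs{H_i}-1)},
\]
and as stated your justification does not carry it. The continuous-time exploration of \cref{ss:exploration} is a \emph{global} exploration that is not anchored at a tagged vertex $v$: \cref{thm:BCM_giantcomp} delivers the asymptotic size of $\comp_{1,\msb}$ and its degree profile, but it gives you no direct access to a conditional probability of the form $\P(v\notin\comp_1\mid\cdot)$. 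What you actually need is a \emph{local-to-global} lemma saying that, up to a negligible fraction of vertices, the event $\{v\in\comp_1\}$ coincides with the event that a branching-process approximation of $v$'s BFS neighborhood survives. That is precisely the content of the paper's \cref{lem:loc_glob_relation}, which equates $\comp_1$ with $\SZ_{\geq K}$ (the set of vertices in components of size $\geq K$) in the double limit $n\to\infty$, $K\to\infty$. Without such a lemma, "component of $v$ is large" and "$v\in\comp_1$" are not the same event, and your branching-process computation does not close. Your fallback suggestion of lifting \cref{cor:BCM_giantcomp} through the projection $\SP$ also falls short: that corollary controls the marginal over $\msr$-vertices only and does not track the \emph{joint} law of $v$'s $\msl$-degree, the $k$ community types $(H_i)$, the community roles $(c_i)$, and giant membership, which is exactly what $A(k,d)$ encodes.

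The paper's actual route is shorter because it front-loads both difficulties onto previously established results: the local weak convergence of the $\RIGC$ (\cite[Theorem 2.8]{vdHKomVad18locArxiv}) already gives concentration of the joint empirical law of $\bigl(\ldeg(V_n^\msl),\pdeg(V_n^\msl),\abs{\comp(V_n^\msl)}\geq K\bigr)$ to the corresponding quantities for $(\rCP,\groot)$ — so your planned second-moment computation is subcontracted to the companion paper — and \cref{lem:loc_glob_relation} supplies the missing local-to-global link. If you want to complete your proposal without invoking the companion paper, you would have to re-prove something equivalent to both of these, which is substantially more work than the sketch suggests. I would recommend restructuring the argument to (a) cite the local weak convergence to obtain, for each fixed $K$, the limit of $N_n^{-1}\abs{\vertices_k^\msl\cap\vertices_d^\msp\cap\SZ_{\geq K}}$ as $A_K(k,d):=\P(\bdeg(\groot)=k,\deg(\groot)=d,\abs{\vertices(\rCP)}\geq K)$, (b) separately prove the identification $\SZ_{\geq K}\symmdiff\comp_1=o_{\sss\P}(N_n)$, and (c) pass $K\to\infty$, using $A_K(k,d)\to A(k,d)$; this is exactly the paper's \cref{lem:loc_glob_relation} plus the three-term triangle inequality appearing in the proof of \cref{thm:degrees_giant}.
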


The proof of \cref{thm:degrees_giant} is deferred to \cref{ss:local_global_rel}, where we also give a heuristic interpretation of $A(k,d)$. It follows (via a truncation argument) from \cref{thm:degrees_giant} and \eqref{eq:rigsize} that the empirical degree distribution in the giant converges:
\beq \frac{\abs{\vertices_d^\msp\cap\comp_1}}{\abs{\comp_1}} 
= \sum_{k\in\Z^+} \frac{ \abs{\vertices_k^\msl\cap\vertices_d^\msp\cap\comp_1} }{N_n} \frac{N_n}{\abs{\comp_1}} 
\toinp \sum_{k\in\Z^+} A(k,d) / \xi_\msl. \eeq
While the expression in \eqref{eq:def_deggiant_distr} seems quite involved, the following remark shows that in fact, it is closely related to the limiting degree distribution of the whole graph. Recall $D^\msl$ from \cref{asmp:convergence} \cref{cond:limit_ldeg}, and $D^\msc$ from \cref{rem:asmp_consequences} \cref{cond:limit_cdeg}. We recall from \cite[(2.20)]{vdHKomVad18locArxiv} that the limiting degree distribution is $D^\msp \eqindis \sum_{i=1}^{D^\msl} D_{(i)}^\msc$, 
where $(D_{(i)}^\msc)_{i\in\Z^+}$ are independent, identically distributed (iid) copies of $D^\msc$, and also independent of $D^\msl$.

\begin{remark}[{Relation of \cref{thm:degrees_giant} and \cite[Corollary 2.9]{vdHKomVad18locArxiv}}] 
We now explore the relation between the degree distribution in the giant and the degree distribution in the entire graph. First, note that for any fixed $c$, with $\varrho_{c}$ from \cref{rem:asmp_consequences} \cref{cond:limit_cdeg},
\beq\label{eq:rem_deg_0} \sum_{H\in\comgraphs} \frac{\nu(c \countin H) \mu_{H}}{\E[D^\msr]} = \varrho_{c} = \P(D^\msc = c). \eeq
Here, the denominator $\E[D^\msr]$ only serves for renormalization, since $\nn{\bm\mu}$ is a distribution on $\rightpar$ with size $M_n$, while $\nn{\bm\varrho}$ is a distribution on $\vertices(\comvect)$ with size $\he_n$. 
The factor $\bigl( 1 - \eta_\msr^{ \sum_{i=1}^k (\abs{H_i}-1)} \bigr)$ in \eqref{eq:def_deggiant_distr} heuristically corresponds to belonging to the giant, which is later justified by \eqref{eq:deg_k_right}. By \eqref{eq:rem_deg_0}, omitting the factor $\bigl( 1 - \eta_\msr^{ \sum_{i=1}^k (\abs{H_i}-1)} \bigr)$ from \eqref{eq:def_deggiant_distr}, its rhs becomes a convolution:
\beq\label{eq:deggiant_relation} \bal 
& p_k \sum_{H_1,\ldots,H_k\in\comgraphs} \sum_{\substack{c_1,\ldots,c_k\in\N\\ c_1+\ldots+c_k=d}} 
\prod_{i=1}^k \frac{\nu(c_i \countin H_i) \mu_{H_i}}{\E[D^\msr]} 
= p_k \sum_{\substack{c_1,\ldots,c_k\in\N\\ c_1+\ldots+c_k=d}} \prod_{i=1}^k \varrho_{c_i} \\
&= p_k \, \P\Bigl( \sum_{i=1}^k D_{(i)}^\msc = d \Bigr) 
= \P\bigl(D^\msl=k\bigr) \,\P\bigl( D^\msp=d \bcond D^\msl=k \bigr)
= \P\bigl( D^\msp=d, D^\msl=k \bigr),
\eal \eeq
which is the asymptotic joint distribution of $\msl$- and $\msp$-degrees in the whole graph. Indeed, combining \cite[Corollary 2.9 and (2.20)]{vdHKomVad18locArxiv} implies that 
\beq \frac{ \abs{\vertices_k^\msl\cap\vertices_d^\msp} }{N_n} \toinp \P\bigl(D^\msp=d,D^\msl=k\bigr). \eeq
\end{remark}

Next, we state our result regarding the number of edges in the giant component. Recall $D_n^\msc$ with pmf $\nn{\bm\varrho}$ from \eqref{eq:def_rho}, 
$\eta_\msr$ from \cref{thm:degrees_giant} and $\gamma$ from \eqref{eq:mnratio}.

\begin{theorem}[Edges in the giant]\label{thm:edgesRIGC}
Consider $\RIGC(\bitd^\msl,\comvect)$ under \cref{asmp:convergence} and the supercriticality condition \cref{cond:supercrit}, and additionally assume that
\beq\label{cond:UI_Dc} \text{$(D_n^\msc)_{n\in\N}$ is uniformly integrable (UI),} \eeq
which in particular implies that $\E[D_n^\msc] \to \E[D^\msc] < \infty$ as $n\to\infty$. 
Let $\rg$ denote a random graph with pmf $\bm\mu$. Then the number of edges in the giant component $\comp_1$ of the $\RIGC$ satisfies, as $n\to\infty$,
\beq\label{eq:rigedges} \frac{\abs{\edges( \comp_1)}}{N_n} 
\toinp \gamma\, \E\bigl[\abs{\edges(\rg)}\bigl(1-\eta_\msr^{\abs{\rg}}\bigr)\bigr]. \eeq
\end{theorem}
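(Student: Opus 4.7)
\emph{Proof plan.} The plan is to deduce \eqref{eq:rigedges} from \cref{thm:degrees_giant} via the handshake identity. Since every edge incident to a vertex of $\comp_1$ has both endpoints in $\comp_1$, and since in a multigraph a self-loop at $v$ contributes $2$ to $\pdeg(v)$, one has
\beq\label{eq:plan_hs} \frac{2\abs{\edges(\comp_1)}}{N_n} = \frac{1}{N_n}\sum_{v\in\comp_1} \pdeg(v) = \sum_{k,d} d\,\frac{\abs{\vertices_k^\msl\cap\vertices_d^\msp\cap\comp_1}}{N_n}. \eeq
\cref{thm:degrees_giant} supplies termwise convergence of each summand to $d\,A(k,d)$, so the remaining work splits into (i) justifying the interchange of limit and summation, and (ii) algebraically simplifying $\sum_{k,d} d\,A(k,d)$ to the right-hand side of \eqref{eq:rigedges}.

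For (i), set $D_n^\msp := \pdeg(V_n^\msl)$. The handshake identity applied inside each $\com_a$ gives $\sum_{a\in\rightpar} 2\abs{\edges(\com_a)} = \he_n\,\E[D_n^\msc]$, so together with the projection \eqref{eq:def_deg} one obtains $\E[D_n^\msp] = \E[D_n^\msl]\,\E[D_n^\msc]$. The hypothesis \eqref{cond:UI_Dc} combined with \cref{asmp:convergence} yields $\E[D_n^\msp] \to \E[D^\msl]\E[D^\msc] = \E[D^\msp]$, and combined with $D_n^\msp \toindis D^\msp$ (from \cite{vdHKomVad18locArxiv}) this gives UI of $(D_n^\msp)_n$. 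This UI controls the tail of the $d$-sum uniformly in $n$, since $\sum_{d>D} d\,\abs{\vertices_d^\msp\cap\comp_1}/N_n \leq \E[D_n^\msp\ind_{D_n^\msp>D}]$; the $k$-sum is truncated analogously using UI of $(D_n^\msl)_n$ from \cref{asmp:convergence}.

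For (ii), apply the handshake identity inside each $H\in\comgraphs$, namely $\sum_c c\,\nu(c\countin H) = 2\abs{\edges(H)}$ and $\sum_c \nu(c\countin H) = \abs{H}$. Inserting these into \eqref{eq:def_deggiant_distr} and distributing the factor $d = c_1+\cdots+c_k$ across the $k$ indices yields
\begin{align*}
\sum_d d\,A(k,d) = 2 p_k \sum_{H_1,\ldots,H_k} \bigl(1 - \eta_\msr^{\sum_i (\abs{H_i}-1)}\bigr) \sum_{i=1}^k \frac{\abs{\edges(H_i)}\mu_{H_i}}{\E[D^\msr]} \prod_{j\neq i} \frac{\abs{H_j}\mu_{H_j}}{\E[D^\msr]}.
\end{align*}
By symmetry the sum over $i$ contributes a factor $k$, and summing over the $k-1$ free $H_j$'s via $\sum_H \abs{H}\mu_H/\E[D^\msr] = 1$ and $\sum_H \abs{H}\mu_H\eta_\msr^{\abs{H}-1}/\E[D^\msr] = G_{\wit D^\msr}(\eta_\msr) = \eta_\msl$ collapses the expression to $2 p_k k \sum_{H_1} \abs{\edges(H_1)}\mu_{H_1}\bigl(1 - \eta_\msr^{\abs{H_1}-1}\eta_\msl^{k-1}\bigr)/\E[D^\msr]$. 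Summing over $k$ using $\sum_k p_k k\eta_\msl^{k-1} = \E[D^\msl]\,G_{\wit D^\msl}(\eta_\msl) = \E[D^\msl]\,\eta_\msr$ gives
$$\sum_{k,d} d\,A(k,d) = \frac{2\E[D^\msl]}{\E[D^\msr]} \sum_H \mu_H \abs{\edges(H)}\bigl(1-\eta_\msr^{\abs{H}}\bigr) = 2\gamma\,\E\bigl[\abs{\edges(\rg)}\bigl(1-\eta_\msr^{\abs{\rg}}\bigr)\bigr],$$
and dividing \eqref{eq:plan_hs} by $2$ yields \eqref{eq:rigedges}. The main non-routine step is the UI of $(D_n^\msp)_n$, for which the hypothesis \eqref{cond:UI_Dc} is essential; without it the random community sum $\pdeg(v)=\sum_{i=1}^{\ldeg(v)} D^\msc_{n,(i)}$ could produce heavy-tailed projected degrees and spoil the truncation. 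The remainder is routine bookkeeping via two uses of the handshake lemma, one global and one per-community.
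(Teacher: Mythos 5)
Your proposal is correct and follows the same architecture as the paper's proof (which is sketched after the theorem statement and completed in Appendix~\ref{apdx:edges}): deduce~\eqref{eq:rigedges} from \cref{thm:degrees_giant} via the handshake identity, justify the interchange of limit and double sum using uniform integrability of the projected degree, and then simplify $\sum_{k,d} d\,A(k,d)$ algebraically. Your part~(ii) reproduces, modulo reorganization, exactly the computations in~\eqref{eq:loc_glob_edges1}--\eqref{eq:loc_glob_edges8}.

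The genuine difference is in how you obtain the key input, namely UI of $(D_n^\msp)_{n\in\N}$. The paper proves this as the direction ``(i) $\Rightarrow$ (ii)'' of \cref{lem:UI_equiv}, by a direct splitting of the sum $\E[D_n^\msp\ind_{\{D_n^\msp\geq K\}}]$ according to whether the community degree $d_j^\msc$ of the contributing role is above or below $\sqrt{K}$, combined with Markov's inequality. Your route is shorter and conceptually cleaner: you observe the exact finite-$n$ identity $\E[D_n^\msp]=\E[D_n^\msl]\,\E[D_n^\msc]$ (in fact, $\frac{1}{N_n}\sum_v\pdeg(v)$ is deterministic, so this holds even conditionally on $\bmatch_n$), note that under~\eqref{cond:UI_Dc} and \cref{asmp:convergence} the right-hand side converges to $\E[D^\msl]\E[D^\msc]=\E[D^\msp]$, and then invoke the Scheff\'e-type fact that convergence in distribution of nonnegative random variables together with convergence of first moments to a finite limit implies UI. This avoids the $\sqrt{K}$-splitting entirely, and also renders the paper's use of item~(iii) of \cref{lem:UI_equiv} (finiteness of $\E[\abs{\edges(\rg)}]$) unnecessary, since finiteness of the right-hand side of~\eqref{eq:rigedges} falls out of the algebra once $\sum_{k,d}d\,A(k,d)\leq\E[D^\msp]<\infty$ is known. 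The trade-off is that \cref{lem:UI_equiv} as stated is a sharper three-way equivalence of some independent interest, which your argument does not recover.

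Two small points worth tightening. First, your tail bound $\sum_{d>D}d\,\abs{\vertices_d^\msp\cap\comp_1}/N_n\leq\E[D_n^\msp\ind_{\{D_n^\msp>D\}}]$ should read with the conditional expectation $\E[D_n^\msp\ind_{\{D_n^\msp>D\}}\bcond\bmatch_n]$ on the right; one then passes to the unconditional expectation and applies Markov's inequality to conclude the tail is small in probability, as the paper does by observing that $(\pdeg(V_n^\msl)\ind_{\{V_n^\msl\in\comp_1\}})_n$ is itself UI. Second, controlling the $k$-tail does not really need UI of $(D_n^\msl)_n$; after the $d$-truncation it suffices that $\sum_{k>K}\nn p_k$ is small uniformly in large $n$, which is the (weaker) tightness guaranteed by \cref{asmp:convergence}~\cref{cond:limit_ldeg}. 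Neither point affects correctness.
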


We sketch the proof shortly below, which requires the following lemma: 
\begin{lemma}[Uniform integrability]
\label{lem:UI_equiv}
The following statements are equivalent:
\begin{enumeratei}
\item\label{UI:comdeg} $(D_n^\msc)_{n\in\N}$ is UI;
\item\label{UI:deg} $(D_n^\msp)_{n\in\N}$ is UI;
\item\label{UI:edges} $(\abs{\edges(\rg_n)})_{n\in\N}$ is UI.
\end{enumeratei}
\end{lemma}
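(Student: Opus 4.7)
My plan is to reduce each of (i), (ii), (iii) to the convergence of the corresponding first moment, using the standard fact that for nonnegative random variables $X_n \toindis X$ with $\E[X]<\infty$, uniform integrability of $(X_n)$ is equivalent to $\E[X_n] \to \E[X]$. In our setting, $D_n^\msc \toindis D^\msc$ is \cref{rem:asmp_consequences} \cref{cond:limit_cdeg}, $D_n^\msp \toindis D^\msp$ is \cite[(2.20)]{vdHKomVad18locArxiv}, and the pointwise convergence $\nth{\bm\mu}\to\bm\mu$ in \cref{asmp:convergence} \cref{cond:limit_com} gives $|\edges(\rg_n)| \toindis |\edges(\rg)|$ as well (for each $k\in\N$ the set $\{H\in\comgraphs : |\edges(H)|=k\}$ is finite, since connectedness forces $L_H\leq k+1$). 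Hence it suffices to link the three first moments.

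The link comes from two elementary double-counting identities. The handshake identity $\sum_{j \in \vertices(\com_a)} d_j^\msc = 2|\edges(\com_a)|$ inside each community, summed over $a \in \rightpar$ and divided by $\he_n = M_n \E[D_n^\msr]$, gives
\beq\label{eq:plan_id1}
\E[D_n^\msc] = \frac{2\,\E[|\edges(\rg_n)|]}{\E[D_n^\msr]}.
\eeq
Since $\bmatch_n$ matches each community role $j \in \vertices(\comvect)$ to a single $\msl$-half-edge and contributes $d_j^\msc$ to the $\msp$-degree of its owner, we have $\sum_{v \in \leftpar} d_v^\msp = \sum_{j \in \vertices(\comvect)} d_j^\msc$ deterministically given $\bmatch_n$; dividing by $N_n$ and using $\he_n = N_n \E[D_n^\msl]$ yields $\E[D_n^\msp \cond \bmatch_n] = \E[D_n^\msl]\,\E[D_n^\msc]$, and hence
\beq\label{eq:plan_id2}
\E[D_n^\msp] = \E[D_n^\msl]\,\E[D_n^\msc].
\eeq
The analogous limit-level identities $\E[D^\msc] = 2\E[|\edges(\rg)|]/\E[D^\msr]$ and $\E[D^\msp] = \E[D^\msl] \E[D^\msc]$ follow directly from the size-biased description of $D^\msc$ (pick $H$ with pmf $|H|\mu_H/\E[D^\msr]$, then a uniform vertex of $H$) and from $D^\msp \eqindis \sum_{i=1}^{D^\msl} D_{(i)}^\msc$ via Wald's identity.

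By \cref{asmp:convergence} \cref{cond:mean_ldeg} and \cref{cond:mean_rdeg}, $\E[D_n^\msl]$ and $\E[D_n^\msr]$ converge to $\E[D^\msl], \E[D^\msr] \in (0,\infty)$ (strictly positive since $\bitd^\msl, \bitd^\msr \geq 1$). Therefore the coefficients relating the three first moments in \eqref{eq:plan_id1}--\eqref{eq:plan_id2} are bounded, bounded away from $0$, and convergent, so the three statements $\E[D_n^\msc] \to \E[D^\msc]$, $\E[|\edges(\rg_n)|] \to \E[|\edges(\rg)|]$ and $\E[D_n^\msp] \to \E[D^\msp]$ are pairwise equivalent, with all three limits simultaneously finite or infinite. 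By the UI criterion this yields (i) $\iff$ (ii) $\iff$ (iii); in the degenerate case that the common limit is infinite all three UI conditions fail (UI implies a bounded first moment, hence finiteness of the distributional limit's mean by Fatou), so the equivalence is trivial there. I do not expect any real obstacle in this proof; the only minor subtlety is that \eqref{eq:plan_id2} must be derived conditionally on $\bmatch_n$, since $\sum_v d_v^\msp$ is a deterministic function of the matching even though each individual $d_v^\msp$ is not.
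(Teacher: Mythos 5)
Your proposal is correct, and it takes a genuinely different route from the paper. The paper's \cref{apxs:UI} proves each implication directly from the definition of uniform integrability, via truncation-and-Markov estimates: e.g., for \cref{UI:comdeg}$\Rightarrow$\cref{UI:deg} it splits on whether $d_j^\msc\geq\sqrt K$ and applies Markov's inequality to the sum of the other community degrees, and for \cref{UI:comdeg}$\Rightarrow$\cref{UI:edges} it bounds the tail contribution using the upper-$\delta$-quantile of $D_n^\msc$. You instead invoke the Vitali/Scheffé equivalence (for nonnegative $X_n\toindis X$ with $\E[X]<\infty$, UI of $(X_n)$ is equivalent to $\E[X_n]\to\E[X]$) and then relate the three first moments by the two exact double-counting identities $\E[D_n^\msp]=\E[D_n^\msl]\,\E[D_n^\msc]$ and $2\,\E[\abs{\edges(\rg_n)}]=\E[D_n^\msr]\,\E[D_n^\msc]$, together with their limiting versions (Wald for $D^\msp$, the handshake lemma plus \eqref{eq:rem_deg_0} for $\rg$). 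This is cleaner and more modular: once you observe that the conversion factors $\E[D_n^\msl],\E[D_n^\msr]$ converge to finite limits that are bounded away from $0$ (since all degrees are $\geq1$), the pairwise equivalence of the three mean-convergence statements is immediate, and the degenerate case $\E[D^\msc]=\infty$ is dispatched by noting that UI forces a finite limit mean (Fatou). What the paper's hands-on approach buys is independence from the external equivalence theorem and slightly more explicit quantitative tail control; what yours buys is brevity and transparency about \emph{why} the three conditions are equivalent (they are just reparametrizations of the same first-moment convergence). The only cosmetic slip is the undefined symbol $L_H$, which should be $\abs{H}$; the finiteness of $\{H\in\comgraphs:\abs{\edges(H)}=k\}$ (since a connected graph with $k$ edges has at most $k+1$ vertices) is correct and is what justifies $\abs{\edges(\rg_n)}\toindis\abs{\edges(\rg)}$ from pointwise convergence of $\nth{\bm\mu}$.
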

The proof of \cref{lem:UI_equiv} is rather technical and tedious and we postpone it to \cref{apxs:UI}, but we discuss the relevance of \cref{lem:UI_equiv} and the condition \eqref{cond:UI_Dc} now. 
The statement in \cref{lem:UI_equiv} \eqref{UI:edges}, or equivalently, the condition \cref{cond:UI_Dc}, is the necessary and sufficient condition for the rhs of \eqref{eq:rigedges} to be finite. 
Since this condition takes the community structure into account, it is more refined than moment conditions on the community size $D_n^\msr$. 
By our assumption that community graphs are simple and connected, $\abs{H} - 1 \leq \abs{\edges(H)} \leq \abs{H}(\abs{H} - 1) / 2$, which implies $\E[D_n^\msr] - 1 \leq \E\bigl[ \abs{\edges(\rg_n)} \bigr] \leq \E[D_n^\msr(D_n^\msr-1)]/2$. 
Thus the condition \cref{lem:UI_equiv} \eqref{UI:edges} is weaker than $\E\bigl[ (D_n^\msr)^2 \bigr] \to \E\bigl[ (D^\msr)^2 \bigr] < \infty$ (which is sufficient, but not necessary), but stronger than $\E[D_n^\msr] \to \E[D^\msr] < \infty$, that is \cref{asmp:convergence} \cref{cond:mean_rdeg}. 
In the general case under \cref{asmp:convergence}, it is still possible that $\E\bigl[ \abs{\edges(\rg_n)} \bigr]$ diverges, which implies that $\abs{\edges(\comp_1)}/N_n$ diverges and \cref{thm:edgesRIGC} does not hold. 

\begin{proof}[Sketch of proof of \cref{thm:edgesRIGC} subject to \cref{lem:UI_equiv}]
\Cref{thm:edgesRIGC} follows from Theorem \ref{thm:degrees_giant} under the extra condition of uniform integrability in \cref{cond:UI_Dc}. 
Denote a uniform $\msl$-vertex $V_n^\msl \distr \Unif[\leftpar]$ and its (projected) degree (see \eqref{eq:def_deg}) $D_n^\msp = \pdeg(V_n^\msl)$. Let $\rg_n$ denote a random graph with pmf $\nn{\bm\mu}$ from \eqref{eq:def_mu}. 
In \cite[Corollary 2.9]{vdHKomVad18locArxiv}, the distributional limit of $D_n^\msp$ is established as $D^\msp \eqindis \sum_{i=1}^{D^\msl} D_{(i)}^\msc$, where $D_{(i)}^\msc$ are iid copies of $D^\msc$ that are independent of $D^\msl$. 
Under condition \eqref{cond:UI_Dc}, \cref{lem:UI_equiv} \eqref{UI:deg} ensures that $\E[D^\msp] < \infty$, which implies that the average degree in the giant is also finite. Thus we can show that under \cref{cond:UI_Dc}, as $n\to\infty$,
\beq\label{eq:edgesconv_Akd} \frac{\abs{\edges(\comp_1)}}{N_n} 
= \frac12 \sum_{d\in\N} d \sum_{k\in\Z^+} \frac{\abs{\leftpar_k \cap \vertices^\msp_d \cap \comp_1}}{N_n} 
\toinp \frac12 \sum_{d\in\N} d \cdot \sum_{k\in\Z^+} A(k,d). \eeq
Under condition \eqref{cond:UI_Dc}, \cref{lem:UI_equiv} \eqref{UI:edges} implies that $\E\bigl[ \abs{\edges(\rg)} \bigr] < \infty$, so that the rhs of \eqref{eq:rigedges} is finite. Then, we can show that the average degree is in fact related to the number of edges in the community graphs:
\beq\label{eq:equiv_edgesformulas} \frac12 \sum_{d\in\N} d \cdot \sum_{k\in\Z^+} A(k,d) 
= \gamma \cdot \E\bigl[ \abs{\edges(\rg)} \bigl( 1 - \eta_\msr^{\abs{\rg}} \bigr) \bigr]. \eeq
We provide the details of the proof in \cref{apxs:edges_giant}. 
\end{proof}

\subsection{The largest component of the bipartite configuration model}
\label{ss:results_BCM}
In this section, we introduce our results on the largest component of the $\BCM$ (see \cref{rem:bcm}), which are of independent interest, and we further apply them to prove our results on the $\RIGC$. Denote the largest component (the component containing the largest total number of vertices, with ties broken arbitrarily) of the $\BCM_n(\bitd^\msl,\bitd^\msr)$ by $\comp_{1,\msb} = \nth\comp_{1,\msb}$, and the second largest by $\comp_{2,\msb}=\nth\comp_{2,\msb}$. Recall $\eta_\msl$, $\xi_\msl$ and $\eta_\msr$ from \cref{thm:giantcomp,thm:degrees_giant} respectively, and $\vertices_k^\msl$ from \eqref{eq:def_Vk}. Our main result on the $\BCM$ is as follows:

\begin{theorem}[The largest component of the $\BCM$]\label{thm:BCM_giantcomp}
Consider $\BCM(\bitd^\msl,\bitd^\msr)$ under \cref{asmp:convergence} {\rm(\ref{cond:limit_ldeg},\ref{cond:mean_ldeg},\ref{cond:limit_rdeg},\ref{cond:mean_rdeg})}, and further assume that $p_2+q_2<2$. Under the supercriticality condition \cref{cond:supercrit}, that we call the \emph{supercritical case} of the $\BCM$, we have that $\xi_\msl > 0$, $\eta_\msl < 1$ and $\eta_\msr < 1$. Then, as $n\to\infty$,
\begin{gather}
\label{eq:leftsize} \frac{\lvert \comp_{1,\msb}\cap\leftpar \rvert}{N_n} \toinp \xi_\msl, \\
\label{eq:deg_k} \frac{\abs{\comp_{1,\msb} \cap \vertices_k^\msl}}{N_n} \toinp p_k \bigl(1-\eta_\msl^k\bigr), \\
\label{eq:edges} \frac{\lvert \edges(\comp_{1,\msb}) \rvert}{N_n} \toinp \E[D^\msl] \bigl(1-\eta_\msl \eta_\msr \bigr).
\end{gather}
In this case, $\comp_{1,\msb}$ is unique in the sense that $\abs{\comp_{2,\msb}}/(N_n+M_n) \toinp 0$, and we refer to $\comp_{1,\msb}$ as the \emph{giant component} of the $\BCM$. When \cref{cond:supercrit} does not hold, $\abs{\comp_{1,\msb}}/(N_n+M_n) \toinp 0$.
\end{theorem}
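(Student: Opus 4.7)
\textbf{Proof plan for \cref{thm:BCM_giantcomp}.} The plan is to adapt the continuous-time breadth-first exploration of Janson--Luczak \cite{JanLuc2009} from the unipartite configuration model to the bipartite setting. I assign to every $\msl$-half-edge an independent $\Exp(1)$ clock and equip every $\msr$-half-edge with one as well. At each time $t$, every half-edge is \emph{sleeping}, \emph{active}, or \emph{dead}, with all half-edges initially sleeping. The exploration proceeds by: whenever the first clock of a sleeping $\msl$-half-edge in an isolated $\msl$-vertex rings, the whole $\msl$-vertex gets activated (all its half-edges become active); the active $\msl$-half-edges then pair with uniformly chosen sleeping $\msr$-half-edges, which in turn activates the corresponding $\msr$-vertices; the newly activated $\msr$-half-edges pair with sleeping $\msl$-half-edges, and so on. This BFS cascade continues instantaneously until no active half-edges remain, finishing a component, and then the process continues at the next clock ring. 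The technical novelty is that, each time a new vertex is discovered, all its remaining half-edges become active simultaneously, which corresponds to a death process whose sleeping-half-edge counts experience jumps of possibly large (``infinite rate'') size.

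Next, I would set up the laws of large numbers for the sleeping-half-edge counts $S^\msl_n(t)$ and $S^\msr_n(t)$. Using martingale concentration on a finite time window together with a Doob-type inequality, and a careful bookkeeping of the cascade, I expect to show that the normalised processes converge uniformly on compact intervals to deterministic curves $s^\msl(t), s^\msr(t)$. These curves can be described implicitly via a pair of auxiliary survival probabilities $\eta_\msl(t)$ and $\eta_\msr(t)$: the probability that a sleeping $\msl$-half-edge (resp.\ $\msr$-half-edge) belongs to an $\msl$-vertex (resp.\ $\msr$-vertex) none of whose other half-edges have yet been activated from the opposite side. Standard generating-function calculations, using independence of the exploration along distinct half-edges of an unexplored vertex, give the relations $\eta_\msr(t) = G_{\wit D^\msl}(\eta_\msl(t))$ and a corresponding identity for $\eta_\msl(t)$ in terms of $\eta_\msr(t)$ and an exponential factor $e^{-t}$ coming from the clocks, so that as $t \to \infty$ the survival functions converge to the smallest fixed point $\eta_\msl$ of $\eta_\msl = G_{\wit D^\msr}(G_{\wit D^\msl}(\eta_\msl))$.

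The phase transition then follows from elementary analysis of $F(x) := G_{\wit D^\msr}(G_{\wit D^\msl}(x))$: on $[0,1]$ it is increasing and convex with $F(1)=1$ and $F'(1)=\E[\wit D^\msl]\E[\wit D^\msr]$, so by standard branching-process reasoning $F$ has a fixed point $\eta_\msl < 1$ if and only if \cref{cond:supercrit} holds, and otherwise $\eta_\msl = 1$. The condition $p_2+q_2<2$ ensures that $\wit D^\msl$ and $\wit D^\msr$ are not both deterministically equal to $1$, so $F$ is genuinely strictly convex and the fixed point is attractive from above; this is precisely what allows the differential-equation analysis to converge to the non-trivial fixed point when supercritical. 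In the subcritical case ($\eta_\msl = 1$) the exploration dies out at every step on a sublinear scale, forcing $\abs{\comp_{1,\msb}}/(N_n+M_n) \toinp 0$.

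In the supercritical case, an $\msl$-vertex of degree $k$ belongs to the giant iff at least one of its $k$ half-edges leads via the tree exploration to an infinite cluster on the $\msr$-side, which has probability $1 - \eta_\msl^k$; this gives \eqref{eq:deg_k} after multiplying by $\nn p_k \to p_k$, and summation over $k$ yields \eqref{eq:leftsize} with $\xi_\msl = 1 - G_{D^\msl}(\eta_\msl)$. For the edge count \eqref{eq:edges}, note that an edge lies in $\comp_{1,\msb}$ iff at least one of its two sides survives in the tree exploration, with asymptotic probability $1 - \eta_\msl \eta_\msr$ by independence of the two sides; since the total number of edges is $\he_n = N_n \E[D_n^\msl] \sim N_n \E[D^\msl]$, the limit follows from a first-moment computation and concentration. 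Finally, uniqueness of the giant follows by a standard sprinkling/continuation argument: after removing $\comp_{1,\msb}$, the residual bipartite graph is subcritical by the strict convexity of $F$, so any second cluster must be of size $o_{\sss\P}(N_n+M_n)$. The main obstacle throughout is the rigorous handling of the instantaneous cascades of vertex discoveries, since they make the death process discontinuous; controlling these ``infinite-rate'' jumps uniformly over the relevant time window is the principal technical step of the proof, carried out in \cref{ss:livingproof}.
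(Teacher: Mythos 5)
Your overall strategy (a continuous-time Janson--Luczak-type exploration and a fixed-point analysis of $F(x)=G_{\wit D^\msr}(G_{\wit D^\msl}(x))$) is the one the paper follows, and your phase-transition analysis and the heuristics for the final formulas \eqref{eq:leftsize}--\eqref{eq:edges} are correct. However, the exploration you actually describe does not work, and this gap is precisely where the paper's main technical contribution lies.

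You write that once a sleeping $\msl$-half-edge's clock rings, ``the active $\msl$-half-edges then pair with uniformly chosen sleeping $\msr$-half-edges, \ldots and so on. This BFS cascade continues instantaneously until no active half-edges remain, finishing a component, and then the process continues at the next clock ring.'' With this rule the entire connected component is revealed instantaneously, so the time variable merely indexes components; the sleeping-half-edge counts then make jumps of size equal to the (unknown) component sizes, and no law of large numbers of the Janson--Luczak type is available. You would be assuming the very quantity you want to compute. The paper's algorithm (\cref{algo:exploration}) is deliberately \emph{asymmetric}: only $\msl$-half-edges carry $\Exp(1)$ clocks; discovering a new $\msr$-vertex (Step~2, one pairing) is instantaneous, but pairing each remaining $\msr$-half-edge of that vertex to an $\msl$-half-edge (Step~3) waits for the next $\msl$-clock to ring. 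Hence a component is revealed gradually, and only a \emph{random but controlled} number of instantaneous jumps (one per $\msr$-vertex discovered) interleaves with the rate-$i$ jumps of the pure death process $\living^\msl(t)$.

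Handling those occasional infinite-rate jumps is where the paper introduces a genuinely new idea that your proposal does not supply: one compares $\living^\msl(t)$ to a standard death process $\livingstd(t)$ via hitting times, writes $\hitting(c)=\hittingstd(c)-\hittingskip(c)$, and re-interprets the saved time $\hittingskip(c)$ probabilistically as the hitting time of an \emph{independent} alarm process $\alarmproc^\msr$ on the $\msr$-half-edges, driven by the size-biased reordering of the $\msr$-degrees. This is what produces the inverse generating function $\inv G_{\wit D^\msr}$ in \cref{prop:living}, which in turn yields the composite fixed-point equation \eqref{eq:fixedpoint}. Nothing in ``martingale concentration \ldots and a careful bookkeeping of the cascade'' supplies this; the martingale/Doob part only covers the easy sleeping side (\cref{lem:sleeping}).

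Two further points where your sketch departs from what can actually be proved. First, the giant is explored over a \emph{finite} time window $[0,t^\star]$ with $t^\star=-\log\eta_\msl$; the fixed point is identified with the first positive zero of $t\mapsto H(\e^{-t})$, not a $t\to\infty$ limit of ``survival functions,'' and when $\eta_\msl=0$ (which requires $p_1=q_1=0$) the window $-\log\wit q_0$ is infinite, so the paper first reduces to $q_1>0$ by a truncation/degree-splitting argument (\cref{claim:reduction}); your sketch does not address this degenerate case. Second, your uniqueness argument (``after removing $\comp_{1,\msb}$, the residual graph is subcritical'') asserts subcriticality of a random residual without proof; the paper instead shows that if a second linear-sized component existed, the uniform choice in Step~1 would find it with positive probability at time $T_2$, contradicting $\lvert\edges(\comp')\rvert/N_n\toinp 0$. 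That argument avoids any claim about the residual degree sequence.
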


We prove \cref{thm:BCM_giantcomp} in \cref{s:proof_giant}, and highlight the main ideas behind the proof shortly below. First, we provide some comments on and corollaries of \cref{thm:BCM_giantcomp}. 
We note that while \eqref{eq:edges} looks ``asymmetric'', since $\he_n = N_n\E[D_n^\msl]=M_n\E[D_n^\msr]$ (see \cref{rem:asmp_consequences} \cref{cond:partition_ratio}), we can rephrase it as $\lvert \edges(\comp_{1,\msb}) \rvert / M_n \toinp \E[D^\msr] (1-\eta_\msl \eta_\msr )$, as well as $\lvert \edges(\comp_{1,\msb}) \rvert / \he_n \toinp 1-\eta_\msl \eta_\msr$. In \cref{ss:discussion}, we discuss why the condition $p_2+q_2<2$ is needed. 
Recall $\vertices_k^\msr$ from \eqref{eq:def_Vk} and $\gamma$ from \cref{rem:asmp_consequences} \cref{cond:partition_ratio}.

\begin{corollary}[The rhs partition]
\label{cor:BCM_giantcomp}
Under the conditions of \cref{thm:BCM_giantcomp} and the supercriticality condition \cref{cond:supercrit}, with $\xi_\msr := 1-G_{D^\msr}(\eta_\msr) \in [0,1]$, as $n\to\infty$,
\begin{gather}
\label{eq:rightsize} \frac{\lvert \comp_{1,\msb}\cap\rightpar \rvert}{M_n} \toinp \xi_\msr, \\
\label{eq:deg_k_right} \frac{\abs{\comp_{1,\msb} \cap \vertices_k^\msr}}{M_n} \toinp q_k \bigl(1-\eta_\msr^k\bigr), \\
\label{eq:allsize} \frac{\lvert \comp_{1,\msb} \rvert }{N_n+M_n} \toinp \frac{\xi_\msl + \gamma\xi_\msr}{1+\gamma}.
\end{gather}
\end{corollary}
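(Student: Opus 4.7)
The plan is to exploit the inherent symmetry of the $\BCM$ under swapping of the two partitions and then invoke \cref{thm:BCM_giantcomp} in its ``mirror'' form. Concretely, the $\BCM$ constructed from degree sequences $(\bitd^\msl,\bitd^\msr)$ on $(\leftpar,\rightpar)$ has the same law as the $\BCM$ constructed from $(\bitd^\msr,\bitd^\msl)$ on $(\rightpar,\leftpar)$; \cref{asmp:convergence}~(\ref{cond:limit_rdeg}--\ref{cond:mean_rdeg},\ref{cond:limit_ldeg}--\ref{cond:mean_ldeg}) holds with the roles of $D^\msl$ and $D^\msr$ exchanged, the condition $p_2+q_2<2$ is symmetric, and the supercriticality condition \cref{cond:supercrit} is symmetric since $\E[\wit D^\msl]\E[\wit D^\msr] = \E[\wit D^\msr]\E[\wit D^\msl]$. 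Applying \cref{thm:BCM_giantcomp} to the swapped model therefore gives the statements \eqref{eq:leftsize} and \eqref{eq:deg_k} with $(\leftpar,N_n, D^\msl,\eta_\msl)$ replaced by $(\rightpar,M_n, D^\msr,\eta_\msr^*)$, where $\eta_\msr^*$ is the smallest solution in $[0,1]$ of the ``swapped'' fixed point equation $\eta_\msr^* = G_{\wit D^\msl}\bigl(G_{\wit D^\msr}(\eta_\msr^*)\bigr)$.

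The first key step is to identify $\eta_\msr^*$ with the quantity $\eta_\msr := G_{\wit D^\msl}(\eta_\msl)$ introduced in \cref{thm:degrees_giant}. One direction is immediate: substituting the defining relation $\eta_\msl = G_{\wit D^\msr}(G_{\wit D^\msl}(\eta_\msl)) = G_{\wit D^\msr}(\eta_\msr)$ into $G_{\wit D^\msl}(\eta_\msl) = \eta_\msr$ yields $\eta_\msr = G_{\wit D^\msl}(G_{\wit D^\msr}(\eta_\msr))$, so $\eta_\msr$ is a fixed point of the swapped iteration and hence $\eta_\msr^* \le \eta_\msr$. For the reverse inequality, set $\wit\eta_\msl := G_{\wit D^\msr}(\eta_\msr^*)$ and observe that $G_{\wit D^\msr}(G_{\wit D^\msl}(\wit\eta_\msl)) = G_{\wit D^\msr}(\eta_\msr^*) = \wit\eta_\msl$, so that $\wit\eta_\msl$ solves the original fixed point equation \eqref{eq:fixedpoint}; minimality of $\eta_\msl$ and monotonicity of $G_{\wit D^\msl}$ then give $\eta_\msr = G_{\wit D^\msl}(\eta_\msl) \le G_{\wit D^\msl}(\wit\eta_\msl) = \eta_\msr^*$. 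Combining the two bounds yields $\eta_\msr^* = \eta_\msr$, so that $\xi_\msr = 1-G_{D^\msr}(\eta_\msr)$ agrees with the quantity produced by the swapped theorem. The swapped \eqref{eq:leftsize} therefore gives \eqref{eq:rightsize}, and the swapped \eqref{eq:deg_k} gives \eqref{eq:deg_k_right}.

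Finally, \eqref{eq:allsize} is obtained by decomposing $\lvert\comp_{1,\msb}\rvert = \lvert\comp_{1,\msb}\cap\leftpar\rvert + \lvert\comp_{1,\msb}\cap\rightpar\rvert$, writing
\beq \frac{\lvert\comp_{1,\msb}\rvert}{N_n+M_n} = \frac{\lvert\comp_{1,\msb}\cap\leftpar\rvert}{N_n}\cdot\frac{N_n}{N_n+M_n} + \frac{\lvert\comp_{1,\msb}\cap\rightpar\rvert}{M_n}\cdot\frac{M_n}{N_n+M_n}, \eeq
and using \cref{rem:asmp_consequences} \cref{cond:partition_ratio}, which gives $M_n/N_n\to\gamma$ and therefore $N_n/(N_n+M_n)\to 1/(1+\gamma)$ and $M_n/(N_n+M_n)\to\gamma/(1+\gamma)$; together with \eqref{eq:leftsize} and \eqref{eq:rightsize}, this yields the claimed limit $(\xi_\msl+\gamma\xi_\msr)/(1+\gamma)$ by the continuous mapping theorem (or Slutsky). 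The only genuine content beyond bookkeeping is the fixed point identification in the second paragraph; this is the main (and only mild) obstacle, and it is handled by the short monotonicity argument sketched above.
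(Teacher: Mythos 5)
Your proposal is correct and takes essentially the same route as the paper: the paper likewise proves the corollary by swapping the roles of the two partitions and appealing to the left--right symmetry of the hypotheses and of the fixed-point equation, deferring the formal identification of $\eta_\msr$ with the smallest fixed point of $z\mapsto G_{\wit D^\msl}(G_{\wit D^\msr}(z))$ to \cref{ss:BPapprox}. Your self-contained two-sided monotonicity argument for $\eta_\msr^*=\eta_\msr$ is a slightly more explicit version of the same identification, and the derivation of \eqref{eq:allsize} by combining \eqref{eq:leftsize} and \eqref{eq:rightsize} with $M_n/N_n\to\gamma$ is exactly the paper's step.
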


\begin{proof}[Proof of \cref{cor:BCM_giantcomp}]
Observe that the role of the lhs and rhs partitions, and in particular,  the role of the quantities $\eta_\msl$ and $\eta_\msr$, as well as that of $\xi_\msl$ and $\xi_\msr$ are symmetric; we formally establish this symmetry below in \cref{ss:BPapprox}. Thus, by switching left and right, (\ref{eq:rightsize}-\ref{eq:deg_k_right}) follow from (\ref{eq:leftsize}-\ref{eq:deg_k}), and combining \eqref{eq:leftsize} and \eqref{eq:rightsize} yields \eqref{eq:allsize}. Thus, 
\cref{cor:BCM_giantcomp} follows from \cref{thm:BCM_giantcomp}.
\end{proof}

\paragraph*{\textbf{Overview of the proof of \cref{thm:BCM_giantcomp}}}
The proof relies on a continuous-time exploration algorithm of the $\BCM$. This algorithm is based on the continuous-time exploration algorithm for the (traditional, unipartite) configuration model proposed in \cite{JanLuc2009}. However, the algorithm in \cite{JanLuc2009} must be modified significantly, since without modification
it only yields the average density in \eqref{eq:allsize}. We give a brief explanation of our modified algorithm and highlight the challenges in its analysis here; the details are provided in \cref{s:proof_giant}.

The new algorithm builds and explores the graph simultaneously and unveils the connected components one by one. We start the exploration of each component by picking an unexplored $\msl$-vertex. During the exploration of a component, each \emph{round} of the algorithm is a \emph{double-step}, described as follows. We first match a free half-edge incident to an $\msl$-vertex $v$ in the component that we are currently exploring, and reach an $\msr$-vertex neighbor $a$. We then match all remaining half-edges of $a$, using it as a bridge to reach second neighbors of $v$ that are again $\msl$-vertices. This constitutes one round (or double-step). By the end of each round, all unmatched half-edges belonging to the component being explored are incident to $\msl$-vertices. Thus, the component is fully explored when a round is completed and there are no more unmatched $\msl$-half-edges. 

The asymmetric roles of the left and right partitions are necessary for obtaining the more refined results on the size of the giant within each partition. 
However, it leads to a more complex analysis, as the $\msr$-vertex to explore in each round is chosen \emph{randomly}. (In particular, we find this vertex by matching the chosen $\msl$-half-edge to a uniform unmatched $\msr$-half-edge, thus we choose the $\msr$-vertex in a size-biased fashion.) Consequently the degree of the $\msr$-vertex is also random, and by matching all its remaining half-edges, we create a random number of edges in the $\BCM$ in one round. In contrast, in the original algorithm exactly one edge is created in each round.

Studying our modified algorithm, only a small portion of the analysis remains the same as for the original algorithm in \cite{JanLuc2009}. The novelty and mathematical challenge lies in studying the evolution of the number of unmatched $\msl$-half-edges. It evolves as a pure death process, where most jumps happen with rate $i$ from state $i$, however some randomly chosen jumps happen \emph{instantaneously} (with rate infinity). The reason for this is exactly the random number of edges created in one round of the algorithm: one instantaneous jump happens in each round due to adding the new $\msr$-vertex, and the ``regular'' jumps happen when we match the rest of the half-edges of this $\msr$-vertex. See \cref{ss:exploration} for more explanation. 
To analyze this death process with two types of jumps, we compare it to a well-understood ``standard'' death process, where there are no instantaneous jumps. The comparison is carried out through hitting times, allowing us to think of the effect of the instantaneous jumps as the \emph{time saved}. We are able to analyze the \emph{time saved} in an elegant way by giving it a new probabilistic interpretation in terms of a size-biased reordering of $\msr$-degrees. 

\subsubsection{Discussion and open problems}
\label{ss:discussion}
For a discussion on the RIGC model (about its applicability, overlapping structure and simplicity), see the companion paper \cite[Section 2.4]{vdHKomVad18locArxiv}. In this section, we provide a discussion on the extra condition $p_2+q_2<2$ and the use of the $\BCM$ to generate simple bipartite graphs with a given degree sequence.

\smallskip
\paragraph*{\textbf{The condition $p_2+q_2<2$}}
We briefly explain why the almost-2-regular graph $p_2+q_2=2$ is excluded. First, we show that the $\rCM$ can be obtained from the $\BCM$ as a special case, then we recall from the literature why the general results are not applicable for the almost-2-regular case of the $\rCM$.

Assume that $\vertices_2^{\msr} = \rightpar$ for all $n$, i.e., all $\msr$-vertices have degree $2$. Then each $\msr$-vertex $a$ only serves as connecting two $\msl$-vertices, say, $v$ and $w$ through the $2$-length path $(v,a,w)$. We can construct a unipartite graph on $\leftpar$ by contracting each of these $(v,a,w)$ paths into an edge $(v,w)$. We show that this unipartite graph has the distribution of the configuration model $\rCM_n(\bitd^\msl)$. For each (unipartite) matching $\wih\bmatch_n = \{(v,i_1),(w,i_2)\}$ of the $\msl$-half-edges corresponding to a unipartite graph, 
there are exactly $\abs{\rightpar}!2^{\abs{\rightpar}}$ bipartite matchings $\bmatch_n$ that are mapped into $\unimatch_n$ by the above contraction. The reason is that we can permute all $\msr$-vertices, as well as each pair of $\msr$-half-edges attached to the same $\msr$-vertex. Since $\bmatch_n$ is a uniform bipartite matching, necessarily $\unimatch_n$ is a uniform (unipartite) matching.

In \cite{JanLuc2009}, the $p_2=1$ case of the $\rCM$ is excluded for the reason that the size of the giant component is \emph{not concentrated}: it shows diverse behavior depending on the more refined asymptotics of the degree structure. In particular, if there are only degree-2 vertices, then the density of the largest component converges to a \emph{non-degenerate} distribution, rather than a constant. However, adding a sublinear proportion of degree-1 vertices makes the size of the giant component drop to sublinear. In contrast, when almost all vertices have degere $2$ and a sublinear proportion has degree $4$, the giant component constitues almost all vertices. For a more detailed discussion see \cite{rvHofRGCNvol2,JanLuc2009}.

By the contraction described above, the $p_2+q_2=2$ case of the $\BCM$ includes the ambiguous $p_2=1$ case of the $\rCM$. In particular, when $\rightpar_2=\rightpar$ for all $n$ and $p_2=1$, the $\BCM$ is equivalent to the $\rCM$ with $p_2=1$. This shows that not only the proof fails for this case, but \cref{thm:BCM_giantcomp} itself does not hold. 

\smallskip
\paragraph*{\textbf{Uniform simple bipartite graphs with given degrees}}
It is well known that the (traditional, unipartite) configuration model ($\rCM$) conditioned on being simple is a uniform simple graph with the given degree sequence. Not surprisingly, the corresponding statement is also true for the $\BCM$. We provide a brief justification below. 
Let $G$ be an arbitrary bipartite multigraph with $\msl$-degree and $\msr$-degree sequences $\bitd^\msl$ and $\bitd^\msr$, and for $v\in\leftpar, a\in\rightpar$, let $x_\msb(v,a)$ denote the multiplicity of the edge $(v,a)$ in $G$. Then the number of (bipartite) matchings $\bmatch_n$ that realize $G$ is
\beq \frac{\prod_{v\in\leftpar}d_v^\msl! \prod_{a\in\rightpar}d_a^\msr!}{\prod_{v\in\leftpar,a\in\rightpar} x_\msb(v,a)!}. \eeq
We justify the formula, as follows. The numerator arises since all half-edges attached to the same vertex are equivalent, hence permuting them leads to the same graph, but a different matching. The denominator in turn arises since all instances of a multi-edge are equivalent, and by permuting both $\msl$- and $\msr$-half-edges, the same set of pairs appears in all possible orderings. Then all simple bipartite graphs, i.e., where all the multiplicities $x_\msb(v,a)$ are $0$ or $1$, arise from $\prod_{v\in\leftpar}d_v^\msl! \prod_{a\in\rightpar}d_a^\msr!$ matchings and thus have the same probability. Thus conditioning the $\BCM$ on being simple indeed leads to a uniform simple bipartite graph with the given $\msl$- and $\msr$-degree sequences.

Note that the probability of obtaining a simple graph might tend to $0$ as $n\to\infty$. Whether the asymptotic probability of obtaining a simple graph is positive is a non-trivial question and falls out of the scope of this paper. Partial results are known, e.g.\ the condition $\E[(D^\msl)^2]<\infty, \E[(D^\msr)^2]<\infty$ guarantees a positive simplicity probability, as shown in \cite{AngHofHol2016}.

We remark that using the above observed relation of the $\BCM$ to uniform random graphs with given degree sequences, our results can be extended beyond the scope of the $\BCM$. It is known that the generalized random graph (GRG) conditioned on its degree sequence yields a uniform random graph with those degrees \cite{BriDeiLof06}. One can define a bipartite version of the model, with lhs partition $[N_n]$ and rhs partition $[M_n]$ with weight sequences $(w_i^\msl)_{i\in [N_n]}$ and $(w_j^\msr)_{j\in [M_n]}$ such that $\he_n := \sum_{i\in [N_n]} w_i^\msl = \sum_{j\in [M_n]} w_j^\msr$. Then the edge probability can be defined as $p_{ij} = \frac{w_i^\msl w_j^\msr }{\he_n +  w_i^\msl w_j^\msr}$ for $i\in[N_n], j\in[M_n]$. A similar argument as in \cite[Section 3]{BriDeiLof06} shows that conditionally on the lhs and rhs degree sequences, this model also yields a uniform bipartite graph with the given degree sequences. Consequently, the bipartite version of the GRG also undergoes a phase transition as in \cref{thm:BCM_giantcomp}. We omit further details.

\smallskip
\paragraph*{\textbf{Open problems and future research directions}}
For such a young model as the $\RIGC$, there are obviously plenty of open questions. It would be really interesting to fit the model to real-world network data to gain more insight into what type of network it is a good fit for, as well as study its performance for finite network sizes in comparison with the asymptotic theoretical results. Another exciting but challenging problem is studying graph distances: due to the community structures added, distances in the $\RIGC$ can be significantly different from the underlying $\BCM$. The \emph{homogeneous bond percolation} (retaining each edge independently with the same probability), that we study in the next section, also leaves open problems and plenty of room for generalizations. The question of \emph{robustness}, formally defined in \cref{ss:results_perc}, informally speaking the ability of the network to withstand random attacks, is explored further in a manuscript in preparation \cite{KomVad19}. One can consider \emph{inhomogeneous} percolation (with different retention probabilities), for example make the retention probability dependent on the degrees of the endpoints or the community graph the edge is part of. (The methods we present in \cref{s:perc_proof} would work for the latter case, however not the former.) 
Another common generalization is site percolation, i.e., percolating \emph{vertices} rather than edges, or even combining the two approaches.

\subsection{Results on percolation on the random intersection graph with communities}
\label{ss:results_perc}
In this section, we introduce the percolation model and state our results on percolation on the random intersection graph with communities.

\subsubsection{Introduction to percolation}
\label{sss:perc_intro}
In this section, we motivate and introduce the percolation model, and prove that percolation on the $\RIGC$ exhibits a phase transition (to be defined later) as we vary the percolation parameter.

\emph{Percolation} \cite{BolRio06perc,Grim99perc} is a probabilistic model introduced in \cite{BroHam57} to study a group of physical phenomena related to a ``fluid'' spreading through a ``porous medium'' in a unified, abstract way. 
Examples and motivations given in \cite{BroHam57} include adsorption of gas or liquid into a porous rock and spreading of a disease through a social network. Percolation processes differ from diffusion processes in that a diffusion process is largely determined by properties of the fluid, while in the case of percolation, the spreading behavior is largely determined by properties of the \emph{medium}. 
In the mathematical model of percolation, we represent the ``porous medium'' by a graph and define a random environment where edges (bond percolation) or vertices (site percolation) of this graph are randomly removed. The `fluid' can then spread through all retained edges (resp., vertices). Many variations of the model exist, but here we focus on bond percolation and the Bernoulli case: each edge is removed with the same probability, independently of each other. 

The notion of \emph{phase transition} also has its roots in physics and refers to the phenomenon when a model shows significantly different behavior depending on a specific parameter. The most common example is the different states of matter, sometimes referred to as phases, that the same material assumes at different temperatures. The parameter value (or interval, in the case of finite systems) where the behavior change occurs is referred to as the \emph{critical point} (or critical window for large finite systems).

Percolation was extensively studied first on infinite (deterministic) lattices, where the phase transition is characterized by the presence or absence of an infinite connected component in the percolated graph, i.e., after the removal of edges. It is straightforward to apply the percolation model for finite as well as random graphs, but less straightforward to define a phase transition. Phase transition on finite graphs is commonly re-interpreted in the large graph limit, as whether or not a linear proportion of the graph is connected after percolation.

We are motivated to study percolation on the $\RIGC$ model by possible applications in epidemiology and large-scale randomized attacks on the network. The correspondence between random removal of edges and a randomized attack on the network is quite intuitive. For a virus spread, whether a computer or biological virus, the percolation model is able to capture the final infected cluster of an information cascade \cite{Cascades01,CascadesSurvey13} or an SI-epidemic \cite{MarL86,MarL98} as defined below.

In the SI-epidemic, individuals have two possible states: susceptible and infected, and infected individuals never recover. Initially, all individuals are susceptible, and at time $0$, we infect a single individual, the source. In each (discrete) time step, all the individuals that became infected in the previous step attempt to transmit the infection through all incident edges. (Each individual only attempts to spread the infection once.) Each transmission succeeds with probability $\pi$, independently of each other. If a successful transmission is made to a susceptible neighbor, then it becomes infected. This continues until there is a time when no new individual becomes infected, and then the process stops. In a system of size $N$, the process is terminated at the latest by time $N-1$. It is easy to see that the infected individuals are exactly the individuals in the percolated component of the source.

Formally, we define (bond) percolation on the $\RIGC$ as follows. Let $\pi\in[0,1]$ be a parameter called the edge \emph{retention} probability. 
Given a realization of the $\RIGC$, we retain each edge, independently of each other, with probability $\pi$, and otherwise delete it. We call the remaining subgraph (with two layers of randomness) the percolated $\RIGC$ and denote it by $\RIGC(\pi)$. Note that $\RIGC(1) = \RIGC$, and $\RIGC(0)$ is the empty graph.

\subsubsection{Phase transition of bond percolation}
\label{sss:bondperc}
Recall 
$D^\msl$ and $D^\msr$ from \cref{asmp:convergence} \cref{cond:limit_ldeg} and \cref{cond:limit_rdeg} respectively. 
Recall $\xi_\msl$ from \cref{thm:giantcomp} and $N_n=\abs{\leftpar}$. 
Denote the largest connected component\footnote{The component containing the most vertices, with ties broken arbitrarily.} of $\RIGC(\pi)$ by $\comp_1(\pi) = \nn\comp_1(\pi)$, and the second largest by $\comp_2(\pi) = \nn\comp_2(\pi)$.

\begin{theorem}[Percolation phase transition on the $\RIGC$]
\label{thm:perc_bond}
Consider (bond) percolation \allowbreak with edge retention probability $\pi\in[0,1]$ on $\RIGC(\bitd^\msl,\comvect)$ under \cref{asmp:convergence} and the supercriticality condition \cref{cond:supercrit}. Then there exists a function $\pi \mapsto \xi_\msl(\pi)$ and a threshold $\pi_c\in[0,1]$ such that $\abs{\comp_1(\pi)}/N_n \toinp \xi_\msl(\pi)$ and
\begin{enumeratei}
\item\label{case:perc_sub} if $\pi<\pi_c$, then $\xi_\msl(\pi) = 0$;
\item\label{case:perc_super} if $\pi>\pi_c$, then $\xi_\msl(\pi)\in(0,\xi_\msl]$ and $\comp_1(\pi)$ is whp unique: $\abs{\comp_2(\pi)}/N_n \toinp 0$.
\end{enumeratei}
\end{theorem}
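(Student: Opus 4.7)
The strategy is to exploit the key structural fact that bond percolation on the $\RIGC$ yields another $\RIGC$ with modified (random) parameters, so that the phase transition is inherited from \cref{thm:giantcomp}. Each edge of $\RIGC(\bitd^\msl,\comvect)$ arises from a unique community edge $(j_1,j_2)\in\edges(\com_a)$ via the projection; hence Bernoulli bond percolation with parameter $\pi$ is equivalent to independently percolating the edges of each community graph $\com_a$. Let $\com_a^{(1)},\ldots,\com_a^{(k_a)}$ denote the connected components of the percolated community $\com_a(\pi)$, and set $\comvect^\pi := (\com_a^{(i)})_{a\in\rightpar,\,i\in[k_a]}$. Reassigning each community role to the unique component containing it produces a bipartite matching on the new parameter pair with an \emph{unchanged} left-degree sequence $\bitd^\msl$, since every role survives and only its host community shrinks. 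Because the original matching is uniform and independent of the percolation, the induced matching is uniform conditional on the percolation outcome, so $\RIGC(\pi) \eqindis \RIGC(\bitd^\msl,\comvect^\pi)$.

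I would then work in the random-parameter framework of \cref{rem:randomparam} to verify that $(\bitd^\msl,\comvect^\pi)$ satisfies the analogue of \cref{asmp:convergence}. The left-degree conditions are inherited trivially. For the new community structure, a law of large numbers for the independently percolated communities, combined with the convergence of the empirical community distribution in \cref{asmp:convergence}\cref{cond:limit_com}, yields pointwise convergence in probability of the new empirical community distribution to the law of a component selected by size-biasing from a percolated sample of $\bm\mu$; the mean new right-degree converges to $\E[D^\msr]/\E[K(\rg,\pi)]<\infty$, where $K(\rg,\pi)$ is the number of components of a percolated $\rg\sim\bm\mu$. The technical condition $p_2+q^\pi_2<2$ is preserved because $p_2<1$ is inherited from $p_2+q_2<2$, so that $p_2+q^\pi_2\leq p_2+1<2$.

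Having established the reformulation and verification, \cref{thm:giantcomp} applied to $\RIGC(\bitd^\msl,\comvect^\pi)$ yields $\abs{\comp_1(\pi)}/N_n\toinp\xi_\msl(\pi)$, with $\xi_\msl(\pi)$ the corresponding survival probability for the percolated parameters, together with the whp uniqueness of $\comp_1(\pi)$ in the supercritical regime. Using the standard monotone coupling of percolation (attach iid $\Unif[0,1]$ weights $U_e$ to edges and retain $e$ at level $\pi$ iff $U_e\leq\pi$), the percolated clusters are nested in $\pi$, so $\pi\mapsto\xi_\msl(\pi)$ is non-decreasing and bounded above by $\xi_\msl(1)=\xi_\msl$. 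Setting $\pi_c := \inf\{\pi\in[0,1]:\,\xi_\msl(\pi)>0\}$, statement \cref{case:perc_sub} follows because for $\pi<\pi_c$ the supercriticality condition \cref{cond:supercrit} fails for the percolated parameters, forcing $\xi_\msl(\pi)=0$ by \cref{thm:giantcomp}; statement \cref{case:perc_super} follows from the supercritical part of \cref{thm:giantcomp} combined with the monotonicity bound $\xi_\msl(\pi)\leq\xi_\msl$.

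The hard part will be the random-parameter verification: since the limiting community law $\bm\mu$ may have unbounded support, percolating a large community can generate many small components, and one must rule out pathological fluctuations of the new empirical community distribution and of the total number $M_n^\pi$ of new communities. A truncation-plus-uniform-integrability argument, restricting to communities of bounded size and controlling the tail contribution via \cref{asmp:convergence} and \cref{lem:UI_equiv}, should close this gap.
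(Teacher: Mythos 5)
Your proposal mirrors the paper's proof in every structural respect: the representation $\RIGC(\pi)\eqindis\RIGC(\bitd^\msl,\comvect(\pi))$ (the paper's Proposition~\ref{prop:rigc_perc}), the verification of the random-parameter analogue of Assumption~\ref{asmp:convergence} via a law of large numbers with truncation (Lemma~\ref{lem:comvect_perc_conv}), the application of Theorem~\ref{thm:giantcomp}, the Harris monotone coupling giving $\pi\mapsto\xi_\msl(\pi)$ non-decreasing, and the definition $\pi_c:=\inf\{\pi:\xi_\msl(\pi)>0\}$. The one flaw is your justification of the condition $p_2+q_2(\pi)<2$: you claim $p_2+q_2<2$ implies $p_2<1$, which is false (take $p_2=1$, $q_2=\tfrac12$). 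The condition in fact holds for a different reason: under supercriticality, $q_1<1$, so some community has at least two vertices, and for any $\pi<1$ percolation then produces singleton communities with positive limiting density, giving $q_1(\pi)>0$, hence $q_2(\pi)<1$, hence $p_2+q_2(\pi)<2$ even if $p_2=1$; and $p_2+q_2<2$ for the unpercolated graph ($\pi=1$) is itself implied by supercriticality, since $p_2=q_2=1$ forces $\E[\wit D^\msl]\E[\wit D^\msr]=1$. The paper leaves this technical point implicit, so it is good that you flagged it, but the stated reason needs fixing.
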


We prove \cref{thm:perc_bond} as a consequence of \cref{thm:giantcomp} in \cref{s:perc_proof}. 
We refer to the behavior in case \eqref{case:perc_sub} as subcritical percolation and in case \eqref{case:perc_super} as supercritical percolation. We assume the supercriticality condition \cref{cond:supercrit} since when this condition fails, case \eqref{case:perc_super} becomes impossible, thus there is no phase transition. 
In the following, we characterize the threshold $\pi_c$. Recall \eqref{eq:def_sizebiasing} and \cref{asmp:convergence} \cref{cond:limit_ldeg}.  

\begin{proposition}[Characterization of the threshold $\pi_c$]
\label{prop:pi_c_properties}
Let $\rg$ denote a random graph with pmf $\bm\mu$ and let $U_\rg \cond \rg \distr \Unif[\vertices(\rg)]$. Let $\comp^\rg(U_\rg,\pi)$ denote the percolated component of $U_\rg$ within $\rg$ with edge retention probability $\pi$. The threshold of the edge retention probability in \cref{thm:perc_bond} above is given by 
\beq\label{eq:pi_c_representation} \pi_c 
= \inf \bigl\{ \pi:\, \E[\wit D^\msl] \cdot \E\bigl[ \abs{\rg} \,(\abs{\comp^\rg(U_\rg,\pi)}-1) \bigr] \big/ \E\bigl[ \abs{\rg} \bigr] > 1 \bigr\}, \eeq
where $\E[\cdot]$ denotes total expectation (with respect to all sources of randomness). Furthermore, $\pi_c<1$.
\end{proposition}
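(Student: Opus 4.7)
The strategy is to exploit the self-similarity of the model under percolation: percolation on the $\RIGC$ again yields an $\RIGC$, but with new (random) community graphs obtained by percolating each original community. I then apply \cref{thm:giantcomp} to the percolated model to identify $\pi_c$. For each $a \in \rightpar$, bond percolation on $\com_a$ produces a random subgraph whose connected components $C_{a,1}, \ldots, C_{a,\nu_a}$ become the new community graphs: an individual originally occupying role $j \in \vertices(\com_a)$ now occupies the same role inside the percolated component of $\com_a$ containing $j$. Since the edges of $\RIGC$ are in bijection with $\edges(\comvect)$, percolating each community independently produces the same distribution as percolating $\RIGC$ directly; combined with the independence of percolation from the bipartite matching, this gives $\RIGC(\pi) \eqindis \RIGC(\bitd^\msl, \comvect(\pi))$ with $\comvect(\pi) := (C_{a,i})_{a, i}$.

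To apply \cref{thm:giantcomp} to this new model, one needs to verify the random-parameter version of \cref{asmp:convergence} from \cref{rem:randomparam}. The $\msl$-degrees are preserved, so conditions \cref{cond:limit_ldeg,cond:mean_ldeg} follow directly. Conditions \cref{cond:limit_com,cond:mean_rdeg} follow because percolation is performed (conditionally) independently across communities sampled from $\nn{\bm\mu}\to\bm\mu$, so that the empirical distribution $\nn{\bm\mu}(\pi)$ of $\comvect(\pi)$ converges in probability to an explicit limit $\bm\mu(\pi)$, and the expected $\msr$-degree converges by a law-of-large-numbers argument. After also checking that the side condition $p_2 + q_2 < 2$ carries over to the new model, \cref{thm:giantcomp} yields that the percolated model has a positive-density giant whp precisely when
\beq\label{eq:plan:supercrit_perc} \E[\wit D^\msl] \cdot \E[\wit D^{\msr\prime}(\pi)] > 1, \eeq
where $D^{\msr\prime}(\pi)$ denotes the size of a uniformly chosen new community.

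It remains to rewrite $\E[\wit D^{\msr\prime}(\pi)]$ in terms of the original data. Size-biasing a new community amounts to picking a role uniformly among all $\he_n$ community tokens, which in the limit is equivalent to sampling $\rg^\star \distr \bm\mu^\star$ and $U_{\rg^\star} \distr \Unif[\vertices(\rg^\star)]$ and taking the percolated component containing $U_{\rg^\star}$. Converting $\bm\mu^\star$ back to $\bm\mu$ via the identity $\sum_{u\in H} f(H,u) = \abs{H}\cdot\E_{U_H}[f(H,U_H)]$ gives
\beq \E[\wit D^{\msr\prime}(\pi)] = \frac{\E\bigl[\abs{\rg}\bigl(\abs{\comp^\rg(U_\rg,\pi)} - 1\bigr)\bigr]}{\E[\abs{\rg}]}, \qquad \rg \distr \bm\mu,\; U_\rg \distr \Unif[\vertices(\rg)], \eeq
so that \cref{eq:plan:supercrit_perc} coincides with the criterion inside the infimum of \eqref{eq:pi_c_representation}. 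By the standard monotone coupling of percolations, the right-hand side is non-decreasing in $\pi$, so the set of $\pi$ for which \cref{eq:plan:supercrit_perc} holds is an interval whose infimum equals $\pi_c$.

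Finally, to see that $\pi_c < 1$: at $\pi = 1$ no edges are removed and $\comp^\rg(U_\rg, 1) = \rg$, since each community is connected by assumption, so the rhs reduces to $\E[\wit D^\msl] \cdot \E[\wit D^\msr]$, which exceeds $1$ by the supercriticality hypothesis \eqref{cond:supercrit}. Monotone convergence of $\abs{\comp^\rg(U_\rg,\pi)} \uparrow \abs{\rg}$ as $\pi \uparrow 1$ then shows that \cref{eq:plan:supercrit_perc} already holds for some $\pi < 1$, yielding $\pi_c < 1$. The main obstacle in this plan is the bookkeeping of Step~2, namely verifying the random-parameter conditions of \cref{rem:randomparam} for $\comvect(\pi)$ and in particular the convergence of the expected new $\msr$-degree together with the persistence of $p_2 + q_2 < 2$; the remainder is a clean application of \cref{thm:giantcomp} together with the size-biasing computation above.
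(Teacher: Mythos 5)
Your proposal is correct and follows essentially the same strategy as the paper: represent $\RIGC(\pi)$ as an $\RIGC$ with the percolated community list $\comvect(\pi)$ (the paper's \cref{prop:rigc_perc}), verify the random-parameter assumptions (the paper's \cref{lem:comvect_perc_conv} together with \cref{rem:randomparam}), apply \cref{thm:giantcomp} to get $\pi_c = \inf\{\pi:\,\E[\wit D^\msl]\,\E[\wit{D^\msr(\pi)}] > 1\}$, and then rewrite $\E[\wit{D^\msr(\pi)}]$ via a size-biasing identity. Your size-biasing step is the paper's \cref{claim:sizebiased_comsize}, namely $\wit{D_n^\msr(\pi)}\eqindis\abs{\comp^\msc(J_n,\pi)}-1$ with $J_n\distr\Unif[\vertices(\comvect)]$, followed by passing to the limit via convergence of the ``random type'' $(\rg_n^\star,I_n)\toindis(\rg^\star,I)$ and the conversion from $\bm\mu^\star$ back to $\bm\mu$, exactly as you indicate.

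The only genuine deviation is in the proof of $\pi_c<1$. You observe that under the Harris coupling $\abs{\comp^\rg(U_\rg,\pi)}\uparrow\abs{\rg}$ a.s.\ as $\pi\uparrow 1$ and invoke monotone convergence to conclude $\E[\abs{\rg}(\abs{\comp^\rg(U_\rg,\pi)}-1)]/\E[\abs{\rg}]\uparrow\E[\wit D^\msr]$, so that the supercriticality threshold is eventually exceeded. This is a cleaner route than the paper's, which avoids MCT and instead truncates: it defines a finite sum $S_{\leq K}(\pi)$ over $\{H:\abs{H}\leq K\}$, shows $S_{\leq K}(1)\to\E[\wit D^\msr]$ as $K\to\infty$ so some fixed $K$ already gives $S_{\leq K}(1)>1/\E[\wit D^\msl]$, and then uses that $S_{\leq K}(\pi)$ is a finite sum of polynomials in $\pi$ and hence continuous to move $\pi$ slightly below $1$. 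Both arguments are valid, and in fact the MCT version handles the potentially infinite-mean case $\E[\wit D^\msr]=\infty$ in one stroke, whereas the paper's truncation is designed precisely to sidestep that case. One further note: the ``persistence of $p_2+q_2<2$'' you flag is a non-issue here, since the supercriticality condition \eqref{cond:supercrit} already forces $p_2+q_2<2$ for the original model, and for $\pi<1$ percolation creates isolated vertices so $q_1(\pi)>0$ and hence $q_2(\pi)<1$.
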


We prove \cref{prop:pi_c_properties} in \cref{ss:pi_c_properties}. We remark that $\pi_c < 1$ ensures that the set of supercritical percolation parameters is always non-empty. However, the set of subcritical parameters $\pi<\pi_c$ \emph{may} be empty. The phenomenon when $\pi_c = 0$ is called \emph{robustness}, and we explore it further in a manuscript in preparation \cite{KomVad19}.

\FloatBarrier
\section{The giant component of the {\rm RIGC} and the {\rm BCM}}
\label{s:proof_giant}
In this section, we prove \cref{thm:giantcomp} on the phase transition of the $\RIGC$ as a corollary of \cref{thm:BCM_giantcomp} on the phase transition of the $\BCM$, and prove \cref{thm:BCM_giantcomp} itself. The latter proof makes use of the continuous-time exploration algorithm sketched in \cref{ss:results_BCM}, that we describe in more detail later in this section, then analyze it.

\begin{proof}[Proof of \cref{thm:giantcomp} subject to \cref{thm:BCM_giantcomp}]
For some $v\in\leftpar$, let us denote its connected component in the $\RIGC$ by $\comp^\msp(v)$, and its connected component in the \emph{underlying} $\BCM$ (see \cref{rem:bcm}) by $\comp^\msb(v)$. Since every community graph is connected, two $\msl$-vertices are connected within the $\RIGC$ exactly when they are connected within the underlying $\BCM$. Consequently, $\comp^\msp(v) = \leftpar \cap \comp^\msb(v)$, and each connected component of the $\RIGC$ is exactly the set of $\msl$-vertices in the corresponding connected component of the underlying $\BCM$. 
Note that ordering the connected components of the underlying $\BCM$ by size generally \emph{does not} ensure that the corresponding connected components of the $\RIGC$ are also ordered by size. 
In the subcritical and critical case, $\abs{\nth\comp_{1,\msb}} = o_{\sss\P}(N_n)$ by \cref{thm:BCM_giantcomp} (recall that $M_n = \gamma N_n + o(N_n)$ by \cref{rem:asmp_consequences} \cref{cond:partition_ratio}). Since $\lvert\comp^\msp(v)\rvert \leq \lvert\comp^\msb(v)\rvert$ for any $v\in\leftpar$, we conclude that
\beq\label{eq:subcritlargest} \lvert\comp_{1}\rvert
= \max_{v\in\leftpar}\, \lvert\comp^\msp(v)\rvert
\leq \max_{v\in\leftpar}\, \lvert\comp^\msb(v)\rvert
= \lvert\comp_{1,\msb}\rvert = o_{\sss\P}(N_n). \eeq
Under the supercriticality condition \cref{cond:supercrit}, $\bigl\lvert \comp_{1,\msb}\cap\leftpar \bigr\rvert \big/N_n \toinp \xi_\msl$ by \cref{thm:BCM_giantcomp}, and for \emph{any other} component $\comp'$ of the $\BCM$, $\abs{\comp'\cap\leftpar} \leq \abs{\comp'} \leq \abs{\comp_{2,\msb}} = o_{\sss\P}(N_n)$. 
Thus necessarily,
\beq\label{eq:giant_correspondance} \comp_1 = \comp_{1,\msb}\cap\leftpar \text{ whp}, \eeq
which implies that $\abs{\comp_1}/N_n \toinp \xi_\msl$, and analogously with \eqref{eq:subcritlargest}, $\abs{\comp_2} = o_{\sss\P}(N_n)$. This concludes the proof of \cref{thm:giantcomp} subject to \cref{thm:BCM_giantcomp}.
\end{proof}

\subsection{Global exploration}
\label{ss:exploration}
We prove our results regarding the giant component of the $\BCM$ with the aid of the exploration algorithm sketched in \cref{ss:results_BCM}. The algorithm is an adaptation of the exploration algorithm of the $\rCM$ proposed by Janson and Luczak in \cite{JanLuc2009}, however the analysis poses new challenges. Below, we introduce the required terminology and notation, and formalize the algorithm in the form of pseudo-code. 

We call two (or more) half-edges \emph{siblings} (a family of half-edges) if they are incident to the same vertex. To keep notation simple, we do not always explicitly indicate the dependence on $n$, however it is always meant. Instead, we add the superscripts $\msl$ or $\msr$ to emphasize which partition each quantity is related to. We define the algorithm focusing on the lhs partition to obtain the statements in \cref{thm:BCM_giantcomp}. We could analogously define and analyze the algorithm focusing on the rhs partition and obtain the statements in \cref{cor:BCM_giantcomp} instead. 
Note that the number of paired half-edges in the two partitions must always be equal. 
All the quantities below are defined to be right-continuous, i.e., if the algorithm updates a quantity at time $t$, the value \emph{at} time $t$ is the updated value.

At any given time, $\leftpar$ is partitioned into the time-dependent set of \emph{sleeping} and \emph{awake} vertices. Initially, all $\msl$-vertices are sleeping, then they are later moved one by one to the awake set and never return to sleeping. Intuitively, an awake vertex is at least partially explored. We denote the number of sleeping $\msl$-vertices of degree $k$ at time $t$ by $\sleepingdeg_k^\msl(t)$. Similarly, $\rightpar$ is partitioned into the sleeping set and awake set, and each $\msr$-vertex starts in the sleeping set and later progresses into the awake set. 

The set of $\msl$-half-edges, at any given time, is partitioned as follows: the sleeping set of size $\sleeping^\msl(t)$, the active set of size $\actives^\msl(t)$ and the paired (dead) set. Intuitively, active half-edges are those half-edges that we already know belong to the component we are currently exploring and are still unpaired. We can thus use them to progress the exploration. Note that
\beq\label{eq:def_sleepingtotal} \sleeping^\msl(t) = \sum_{k=1}^\infty k \sleepingdeg_k^\msl(t). \eeq
Each $\msl$-half-edge progresses from sleeping to active to paired, or directly from sleeping to paired. Sometimes we say a half-edge ``dies'' to mean that we pair it or that we must pair it immediately. We thus refer to the union of the sleeping and active sets as the living (unmatched) set, which has size $\living^\msl(t) = \actives^\msl(t) + \sleeping^\msl(t)$. Further, we assign iid $\Exp(1)$ random variables to each $\msl$-half-edge, that we call the \emph{alarm clock} of the half-edge. Once the exploration time reaches the value of this variable, the alarm goes off, and if the half-edge is still unpaired, it dies and must be paired immediately. When an $\msl$-half-edge dies, if the incident $\msl$-vertex is sleeping, we set it awake, and set all sibling half-edges active. (If the incident $\msl$-vertex is already awake, we do not change the status of the vertex or the sibling half-edges.) When we set an $\msl$-vertex awake for a different reason, we set each incident half-edge active.

The $\msr$-half-edges are partitioned into the sleeping set, the \emph{waiting-to-be-paired} set of size $\waiting^\msr(t)$, and the paired (dead) set. Half-edges may progress from sleeping to paired directly, or through the waiting-to-be-paired status, but never move backwards. While the waiting-to-be-paired set on the rhs plays a role analogous to those of active half-edges on the lhs, we use a different notion to emphasize their different roles in the algorithm: while the set of active $\msl$-half-edges is allowed to grow large, the waiting-to-be-paired set always must be exhausted immediately. When an $\msr$-half-edge is paired, the incident $\msr$-vertex is set to awake, and all sibling half-edges are set to be waiting-to-be-paired. (By the design of the algorithm, this is the only way to set an $\msr$-vertex awake.)

\begin{algo}[Continuous-time exploration of the $\BCM$]
\label{algo:exploration} \normalfont
\begin{algorithmic}
\State Initially, $t=0$, all vertices and half-edges are sleeping, and all half-edges are unpaired. Let $\steps_1$ and $\steps_2$ be initially empty lists.
\While{there are unpaired $\msl$-half-edges ($\living^\msl(t)>0$)}
\If{there are no active $\msl$-half-edges ($\actives^\msl(t) = 0$)}
\startstep[step1]{Starting the exploration of a new component.}
\State pick a \emph{sleeping} $\msl$-half-edge $x$ uar.
\State set the incident $\msl$-vertex $v$ as \emph{awake}. ($\sleepingdeg^\msl_{\ldeg(v)}(t) := \sleepingdeg^\msl_{\ldeg(v)}(t) - 1$)
\State set $x$ and all sibling half-edges as \emph{active}. ($\actives^\msl(t) := \ldeg(v)$)
\State append $t$ to the list $\steps_1$. ($t$ remains unchanged.)
\endstep[\step1]
\EndIf
\startstep[step2]{Discovering a new $\msr$-vertex.}
\State pick an \emph{active} $\msl$-half-edge $x$ arbitrarily and a \emph{sleeping} $\msr$-half-edge $y$ uar.
\State match $x$ and $y$ to form an edge and set both as \emph{paired}. ($\actives^\msl(t) := \actives^\msl(t) - 1$)
\State set the $\msr$-vertex $a$ incident to $y$ as \emph{awake}.
\State set sibling half-edges of $y$ as \emph{waiting-to-be-paired}. ($\waiting^\msr(t) := \rdeg(a) - 1$)
\State append $t$ to the list $\steps_2$. ($t$ remains unchanged.)
\endstep[step2]
\While{there are waiting-to-be-paired $\msr$-half-edges ($\waiting^\msr(t) > 0$)}
\startstep[step3]{Exploring further connections of the $\msr$-vertex.}
\State pick a \emph{waiting-to-be-paired} $\msr$-half-edge $y$ arbitrarily.
\State wait $\rd t$ time until the first alarm clock of an \emph{unpaired} $\msl$-half-edge $x$ rings.
\If{$x$ is sleeping}
\State set the $\msl$-vertex $v$ incident to $x$ as \emph{awake}. ($\sleepingdeg^\msl_{\ldeg(v)}(t + \rd t) := \sleepingdeg^\msl_{\ldeg(v)}(t) - 1$)
\State set sibling half-edges of $x$ as \emph{active}. ($\actives^\msl(t + \rd t) := \actives^\msl(t) + \ldeg(v) - 1$)
\EndIf
\State match $x$ and $y$ to form an edge and set both as \emph{paired}. ($\living^\msl(t + \rd t) := \living^\msl(t) - 1$, $\waiting^\msr(t + \rd t) := \waiting^\msr(t) - 1$)
\State increase time $t := t + \rd t$.]
\endstep[step3]
\EndWhile
\EndWhile
\end{algorithmic}
\end{algo}

The unit of the algorithm we often focus on is one \emph{iteration} of the outer \emph{while} loop, i.e., the conditional execution of \step1, the execution of \step2 and the internal \emph{while} loop of \step{3}s, which corresponds to discovering an $\msr$-vertex and matching all its remaining half-edges. By construction, the lists $\steps_1$ and $\steps_2$ contain the time stamps of all executions of \step1 and \step2, respectively. Noting that in each iteration, \step2 is executed once while \step1 is executed once only if the condition is satisfied and is otherwise not executed, $\steps_1$ must be a sublist of $\steps_2$. We also remark that both $\steps_1$ and $\steps_2$ may contain duplicates of the same time stamp, as the time variable is only increased in \step3, which is not executed in those iterations when the condition of the internal \emph{while} loop fails, that is, when the $\msr$-vertex found has degree one, so that the chosen $\msr$-half-edge does not have any sibling half-edges. 

\begin{remark}[Original algorithm as special case]
In \cref{ss:discussion}, we have shown that when each $\msr$-vertex has degree $2$, the bipartite configuration model $\BCM_{n}(\bitd^\msl,\bitd^\msr)$ is equivalent to $\rCM_n(\bitd^\msl)$. In this case, \step{3} is executed exactly once in each iteration, and our algorithm gives back the exploration for the $\rCM$ in \cite{JanLuc2009}.  
\end{remark}

\subsection{Analysis of the exploration algorithm}\label{ss:ingredients_giantcomp_proof}
In this section, we study \cref{algo:exploration}. The results obtained serve as ingredients to the proof of \cref{thm:BCM_giantcomp} in \cref{ss:BCM_giantcomp_proof}. Recall that we begin the exploration of a new component exactly when \step{1} is executed, for which $\actives^\msl(t)=0$ is a \emph{necessary} condition. Thus our aim is to understand the behavior of $t\mapsto \actives^\msl(t)$ during the course of the exploration, in particular, to determine the zeros of this function. Our analysis, as in \cite{JanLuc2009}, is based on the simple observation that $\actives^\msl(t) = \living^\msl(t) - \sleeping^\msl(t)$. We move on to studying the quantities $\sleeping^\msl(t)$ and $\living^\msl(t)$ separately.

The dynamics of $\sleeping^\msl(t)$, similarly to the corresponding quantity in the algorithm in \cite{JanLuc2009}, are the following. Note that \step{2} does not affect $\sleeping^\msl(t)$. Regularly, $\msl$-half-edges are removed from the sleeping set when the alarm clock of the half-edge itself or one of its siblings rings, due to \step{3}. However, some families of $\msl$-half-edges are removed from the sleeping set due to \step{1}, when we start the exploration of a new component by picking a uniform $\msl$-half-edge and set it active together with its siblings, and set the incident $\msl$-vertex awake. Let $\wih \sleepingdeg_k^\msl(t)$ denote the number of $\msl$-vertices of degree $k$ such that the alarm clocks of all $\msl$-half-edges show a time greater than $t$, and define
\beq\label{eq:sleepingapprox} \wih \sleeping^\msl(t) := 
\sum_{k=1}^\infty k \wih \sleepingdeg_k^\msl(t). \eeq
Comparing with \eqref{eq:def_sleepingtotal}, we intuitively think of $\wih \sleeping^\msl(t)$ as the number of sleeping $\msl$-half-edges ignoring the contribution of \step{1}, and it serves as an approximation for $\sleeping^\msl(t)$. We recall the following result that holds unchanged for the bipartite case:

\begin{lemma}[Sleeping vertices and half-edges, {\cite[Lemma~5.2.]{JanLuc2009}}]\label{lem:sleeping}
Define
\beq\label{eq:defh1} h_1(z):=\E[D^\msl] z G_{\wit D^\msl}(z) \eeq
for $z\in[0,1]$. For any $t_0$ fixed, as $n\to\infty$,
\begin{gather}
\label{eq:sleeping_deg_k} \forall \,k\geq1,\quad \sup_{t\leq t_0} \Big\lvert \frac{1}{N_n} \wih \sleepingdeg_k^\msl(t) - p_k \e^{-kt} \Big\rvert \toinp 0; \\
\label{eq:sleeping_vert} \sup_{t\leq t_0} \Big\lvert \frac{1}{N_n} \sum_{k=1}^\infty \wih \sleepingdeg_k^\msl(t) - G_{D^\msl}(\e^{-t}) \Big\rvert \toinp 0; \\
\label{eq:sleeping_halfedge} \sup_{t\leq t_0} \Big\lvert \frac{1}{N_n} \wih \sleeping^\msl(t) - h_1(\e^{-t} ) \Big\rvert \toinp 0.
\end{gather}
\end{lemma}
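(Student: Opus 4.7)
My plan is to first establish the pointwise convergence in \eqref{eq:sleeping_deg_k} for fixed $t,k$ via a direct second moment argument, then upgrade to uniform convergence in $t$ via monotonicity, and finally handle \eqref{eq:sleeping_vert}--\eqref{eq:sleeping_halfedge} by truncation in $k$ combined with the uniform integrability implicit in \cref{asmp:convergence}~\cref{cond:mean_ldeg}.

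For \eqref{eq:sleeping_deg_k}, note that $\wih \sleepingdeg_k^\msl(t)$ is a sum over $v\in\vertices_k^\msl$ of independent Bernoulli indicators of the event ``all $k$ alarm clocks attached to $v$ exceed $t$'', each with probability $\e^{-kt}$. Hence $\E[\wih \sleepingdeg_k^\msl(t)]=\abs{\vertices_k^\msl}\e^{-kt}=N_n\,\nn p_k\,\e^{-kt}$ and $\Var(\wih \sleepingdeg_k^\msl(t))\leq N_n \,\nn p_k\, \e^{-kt}\leq N_n$. Combining Chebyshev's inequality with $\nn p_k\to p_k$ from \cref{asmp:convergence}~\cref{cond:limit_ldeg} gives pointwise convergence $\wih \sleepingdeg_k^\msl(t)/N_n \toinp p_k\e^{-kt}$. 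To upgrade to a sup in $t$, I exploit monotonicity: $t\mapsto \wih \sleepingdeg_k^\msl(t)$ is non-increasing, and $t\mapsto p_k\e^{-kt}$ is continuous and non-increasing. A Glivenko--Cantelli-style argument then applies: partition $[0,t_0]$ into finitely many points $0=t^{(0)}<t^{(1)}<\ldots<t^{(J)}=t_0$ so that the continuous limit varies by at most $\eps$ on each subinterval, apply pointwise convergence on the grid, and use monotonicity to sandwich the prelimit on each subinterval by its values at the endpoints. Taking a union bound over the finitely many grid points yields \eqref{eq:sleeping_deg_k}.

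For \eqref{eq:sleeping_vert}, fix $\eps>0$ and use the truncation
\beq\label{eq:sleeping_truncation_proposal}
\frac{1}{N_n}\sum_{k=1}^\infty \wih \sleepingdeg_k^\msl(t)
=\frac{1}{N_n}\sum_{k=1}^K \wih \sleepingdeg_k^\msl(t)
+\frac{1}{N_n}\sum_{k>K} \wih \sleepingdeg_k^\msl(t).
\eeq
For the first sum, finitely many applications of \eqref{eq:sleeping_deg_k} together with $\sum_{k\leq K} p_k\e^{-kt}\to G_{D^\msl}(\e^{-t})$ as $K\to\infty$ take care of the head. For the tail, the crude bound $\wih \sleepingdeg_k^\msl(t)\leq \abs{\vertices_k^\msl}$ gives
\beq
\frac{1}{N_n}\sum_{k>K}\wih \sleepingdeg_k^\msl(t)\leq \sum_{k>K}\nn p_k=\P(D_n^\msl>K),
\eeq
which can be made smaller than $\eps$ uniformly in $n$ (for large enough $K$) by tightness of $D_n^\msl$, a consequence of \cref{asmp:convergence}~\cref{cond:limit_ldeg}. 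Uniformity in $t\leq t_0$ is automatic because the truncation bound does not depend on $t$ and the head is a finite sum of uniformly convergent terms.

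The proof of \eqref{eq:sleeping_halfedge} is the same in structure but requires controlling the $k$-weighted tail $\frac{1}{N_n}\sum_{k>K}k\,\wih \sleepingdeg_k^\msl(t)\leq \sum_{k>K}k\,\nn p_k=\E[D_n^\msl\,\ind_{\{D_n^\msl>K\}}]$. This is where \cref{asmp:convergence}~\cref{cond:mean_ldeg} enters: the convergence $\E[D_n^\msl]\to\E[D^\msl]<\infty$, combined with $D_n^\msl\toindis D^\msl$, implies uniform integrability of $(D_n^\msl)_{n\in\N}$, hence the tail expectation above can be made arbitrarily small uniformly in $n$ by choosing $K$ large. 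The head is handled as before by combining \eqref{eq:sleeping_deg_k} for each $k\leq K$ with the definition $h_1(z)=\E[D^\msl]\,z\,G_{\wit D^\msl}(z)=\sum_{k\geq 1}k\,p_k\,z^k$. I expect the main conceptual step to be this tail control via uniform integrability; the rest follows the same template as the unipartite argument in \cite{JanLuc2009}, so in fact one may simply cite Lemma~5.2 of \cite{JanLuc2009} verbatim, as the degree structure on the lhs is identical to the unipartite configuration model setting.
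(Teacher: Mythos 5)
Your proof is correct and, as you anticipate in your final sentence, the paper itself does not reprove this lemma: it simply cites Lemma~5.2 of \cite{JanLuc2009} with the one-line remark that the result ``holds unchanged for the bipartite case'', precisely because $\wih\sleepingdeg_k^\msl(t)$ depends only on the iid $\Exp(1)$ alarm clocks attached to $\msl$-half-edges and the $\msl$-degree sequence, which are structurally identical to the unipartite configuration model inputs. Your self-contained reconstruction is sound: the binomial second-moment bound plus the monotonicity/grid (Glivenko--Cantelli) upgrade to uniformity for \eqref{eq:sleeping_deg_k}, tightness of $(D_n^\msl)$ for the tail in \eqref{eq:sleeping_vert}, and the standard fact that $D_n^\msl\toindis D^\msl$ together with $\E[D_n^\msl]\to\E[D^\msl]<\infty$ forces uniform integrability for the $k$-weighted tail in \eqref{eq:sleeping_halfedge} are all exactly the ingredients the original Janson--Luczak argument uses.
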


We introduce
\beq\label{eq:atilde} \wih \actives^\msl(t) := \living^\msl(t) -\wih \sleeping^\msl(t), \eeq
that serves as our approximation for $\actives^\msl(t)$. 
The next lemma, recalled from \cite{JanLuc2009}, bounds the error that we make with this approximation: 

\begin{lemma}[The effect of \step{1}, {\cite[Lemma~5.3.]{JanLuc2009}}] \label{lem:step1effect}
With $\dmax{\msl}$ from \cref{rem:asmp_consequences} \cref{cond:dmax},
\beq 0 \leq \wih \sleeping^\msl(t) - \sleeping^\msl(t) < \sup_{s\leq t} \bigl(\wih \sleeping^\msl(s) - \living^\msl(s)\bigr) + \dmax{\msl}. \eeq
The above bound can be rewritten in the more convenient form
\beq 0 \leq \actives^\msl(t) - \wih \actives^\msl(t) = \wih \sleeping^\msl(t) - \sleeping^\msl(t) < -\inf_{s\leq t} \wih \actives^\msl(t) + \dmax{\msl}. \eeq
\end{lemma}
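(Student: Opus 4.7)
The lower bound $0 \leq \wih \sleeping^\msl(t) - \sleeping^\msl(t)$ is immediate: any vertex $v$ still sleeping in the algorithm has had no alarm clock of its $\msl$-half-edges ring (the only mechanism in \step{3} that can pair one of them would first wake $v$), so $\min_i \tau_{v,i} > t$ and $v$ is also counted by $\wih\sleepingdeg_{\ldeg(v)}^\msl(t)$. Hence $S(t)\subseteq \wih S(t)$ in the obvious set-wise sense, and summing $\ldeg(v)$ over the two sets gives the inequality.

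For the upper bound, the plan is to track the gap $G(s) := \wih\sleeping^\msl(s) - \sleeping^\msl(s) = \sum_v \ldeg(v)\, \ind_{v\in\wih S(s)\setminus S(s)}$ and to show that $G$ only increases at \step{1} events. A vertex $v$ contributes to $G(s)$ exactly when it has been woken by some \step{1} at time $\leq s$ yet $\min_i \tau_{v,i} > s$. Two types of events move $G$:
\begin{enumeratea}
\item a \step{1} at time $T$ picks a sleeping vertex $v_T$ with alarm clocks all above $T$, so $v_T$ enters $\wih S\setminus S$ and $G$ jumps up by $\ldeg(v_T)\leq \dmax{\msl}$;
\item at time $s^*=\min_i\tau_{v,i}$ the vertex $v$ exits $\wih S$; if $v$ had been woken by \step{1} then $G$ drops by $\ldeg(v)$, whereas if $v$ was still in $S$ then the ringing is exactly the \step{3} event that also wakes $v$, so $v$ exits $S$ at the same instant and $G$ is unchanged.
\end{enumeratea}
The subtle point, and the main obstacle, is the case analysis in (b): it relies on the fact that a sleeping vertex has all of its $\msl$-half-edges unpaired (they cannot be chosen in \step{2}, which requires an active half-edge), so the first of its alarm clocks to ring is precisely the one that triggers \step{3}; conversely $\wih\sleeping^\msl$ is defined purely in terms of alarm clocks, so it can drop at a time $\min_i\tau_{v,i}$ even after the corresponding half-edge has been paired in the algorithm.

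Given (a) and (b), between consecutive \step{1} executions the process $G$ is non-increasing. Let $T\leq t$ be the latest \step{1} time up to $t$, which exists because the algorithm starts with \step{1} at $t=0$. Then
\begin{equation*}
G(t) \leq G(T^-) + \ldeg(v_T) \leq G(T^-) + \dmax{\msl}.
\end{equation*}
The defining property $\actives^\msl(T^-) = 0$ of \step{1} combined with $\actives^\msl = \living^\msl - \sleeping^\msl$ gives $\sleeping^\msl(T^-) = \living^\msl(T^-)$, so
\begin{equation*}
G(T^-) = \wih\sleeping^\msl(T^-) - \living^\msl(T^-) = -\wih\actives^\msl(T^-) \leq \sup_{s\leq t}\bigl(\wih\sleeping^\msl(s)-\living^\msl(s)\bigr),
\end{equation*}
and combining the two displays yields the first inequality of the lemma. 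The equivalence to the second form is just the identities $\wih\actives^\msl = \living^\msl - \wih\sleeping^\msl$ and $\actives^\msl = \living^\msl - \sleeping^\msl$, which imply both $\actives^\msl(t) - \wih\actives^\msl(t) = \wih\sleeping^\msl(t) - \sleeping^\msl(t)$ and $\sup_{s\leq t}\bigl(-\wih\actives^\msl(s)\bigr) = -\inf_{s\leq t}\wih\actives^\msl(s)$.
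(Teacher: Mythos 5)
Your proof follows essentially the same route as the paper's --- tracking the gap $G(s) = \wih\sleeping^\msl(s) - \sleeping^\msl(s)$, observing that only \step{1} increases it, and bounding $G$ at the last \step{1} by invoking $\actives^\msl = 0$ --- and the conclusion is correct. The place where you gloss over the subtlety that the paper explicitly reproves this lemma to address is the use of $T^-$. In the algorithm, \step{1} and \step{2} are instantaneous and may execute several times at a single time stamp $T$ (for instance when $\msr$-vertices of degree $1$ and then $\msl$-vertices of degree $1$ are found repeatedly). All processes in the paper are defined right-continuous in time, so the \emph{time} left-limit $\actives^\msl(T^-)$ need not be $0$: the \step{3} arriving at $T$ can itself drive $\actives^\msl$ to $0$. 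If instead $T^-$ is read as the algorithmic-step left-limit (the state just before the last \step{1} execution at time $T$), then $\actives^\msl(T^-)=0$ and $G(t)\leq G(T^-)+\ldeg(v_T)$ are both correct, but then $-\wih\actives^\msl(T^-)$ is a step-indexed quantity, and the inequality $-\wih\actives^\msl(T^-)\leq\sup_{s\leq t}\bigl(\wih\sleeping^\msl(s)-\living^\msl(s)\bigr)$ (where the supremum is over time) no longer follows by definition: it holds only because $\living^\msl$ is non-increasing across steps at time $T$ (the intervening \step{2}s only pair half-edges), so $\living^\msl(T^-)\geq\living^\msl(T)$ and hence $\wih\sleeping^\msl(T)-\living^\msl(T^-)\leq\wih\sleeping^\msl(T)-\living^\msl(T)$, and you should spell this out. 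The paper sidesteps this entirely by arguing about the \emph{final} time-indexed value $\actives^\msl(t)\leq\ldeg(v)-1<\dmax{\msl}$ at any $t\in\steps_1$, which also immediately delivers the strict inequality $<$ of the lemma; your chain of $\leq$'s only gives $\leq$, although the same reasoning ($\living^\msl(T^-)\geq\living^\msl(T)+1$ because at least one \step{2} intervenes) would recover strictness if you wanted it.
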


Recall the sequence $\steps_1$ from \cref{algo:exploration} that contains the time stamps of all executions of \step1, and that it may contain the same time stamp several times. 
Since this does not occur in the original algorithm in \cite{JanLuc2009}, we reprove the lemma to show that this does not cause an issue.

\begin{proof} First, we study what happens at a time $t \in \steps_1\subseteq\steps_2$. 
As explained after \cref{algo:exploration}, $t$ may appear several times in the sequences $\steps_1 \subseteq \steps_2$, as finding a degree $1$ $\msr$-vertex uses up its single half-edge in \step{2}, which results in not executing \step{3} in that iteration and not increasing the time variable. However, the number of active $\msl$-half-edges changes with each execution of \step{1} and \step{2}, thus we overwrite (redefine) $\actives^\msl(t)$ each time until an $\msr$-vertex with degree at least $2$ is found in \step{2}. This \step{2} then necessarily corresponds to the last instance of $t$ in $\steps_2$ and sets the final value of $\actives^\msl(t)$, as \step{3} must be executed next and the time variable will increase. Consider the last execution of \step{1} at $t$, which is either in the same or an earlier iteration than the last \step{2}, and denote the $\msl$-vertex woken up by this \step{1} by $v$. As \step{2} is executed at least once afterwards, 
we have $\actives^\msl(t) \leq \ldeg(v)-1 < \dmax{\msl}$. Recall that $\living^\msl(t) = \actives^\msl(t)+\sleeping^\msl(t)$, hence
\beq\label{eq:glob_proof1} \wih \sleeping^\msl(t)-\sleeping^\msl(t) = \wih \sleeping^\msl(t) - \living^\msl(t) + \actives^\msl(t) < \wih \sleeping^\msl(t) - \living^\msl(t) + \dmax{\msl}. \eeq
By the definition of $\wih \sleeping^\msl(t)$, $\wih \sleeping^\msl(t) - \sleeping^\msl(t) \geq 0$ and the difference grows only due to \step{1}, while it might decrease due to \step{2} or \step{3}.\footnote{E.g.\ when a clock of an $\msl$-half-edge rings that was waken up in \step{1} previously.} Hence for a time $t' \notin \steps_1$, $\wih \sleeping^\msl(t') - \sleeping^\msl(t') \leq \wih \sleeping^\msl(s) - \sleeping^\msl(s)$, where $s := \max\{t\in\steps_1:\, t<t'\}$. Then, using \eqref{eq:glob_proof1} and that the supremum is actually a finite maximum over a subsequence of $\steps_1$,
\beq \wih \sleeping^\msl(t') - \sleeping^\msl(t')
\leq \sup_{s\leq t'}\, \bigl(\wih \sleeping^\msl(s) - \sleeping^\msl(s)\bigr)
< \sup_{s\leq t'}\, \bigl(\wih \sleeping^\msl(s) - \living^\msl(s)\bigr) + \dmax{\msl}, \eeq
which concludes the proof.
\end{proof}

Next, we state our novel result on the process of living (unmatched) half-edges $\living^\msl(t)$. As remarked in the sketch of the proof of \cref{thm:BCM_giantcomp} in \cref{ss:results_BCM}, the dynamics of this process are significantly different from the corresponding process in \cite{JanLuc2009}. The analysis of the new process, carried out in \cref{ss:livingproof} below, is our major novel contribution to generalizing the algorithm to the bipartite case.

We introduce some notation necessary to state our result. For an arbitrary invertible function $f$, let $\inv f$ denote the inverse function of $f$, i.e., $\inv f ( f(z) ) = z$ for any $z$ in the domain of $f$ and $f(\inv f (z)) = z$ for any $z$ in the domain of $\inv f$ (i.e., the range of $f$). Recall \eqref{eq:def_sizebiasing}, \eqref{eq:def_genfunc}, $D^\msl$ and $D^\msr$ from \cref{asmp:convergence} \cref{cond:limit_ldeg}  and \cref{cond:limit_rdeg}. Since the generating function $G_X$ of a random variable $X$ taking values from $\N$ (such that $\P(X=0)<1$) is continuous and strictly increasing, $\inv G_X$ exists on the interval $\bigl[\P(X=0),1\bigr]$.

\begin{proposition}[Living half-edges]\label{prop:living}
Define the function
\beq\label{eq:defh2} h_2(z) := \E[D^\msl]z \inv G_{\wit D^\msr}(z) \eeq
on $[\wit q_0,1]$, where $\wit q_0 := \P\bigl(\wit D^\msr = 0\bigr) = q_1/\E[D^\msr]$. The process of living half-edges $\living^\msl(t)$ satisfies, for any $0<t_0<-\log\, \wit q_0$,
\beq\label{eq:living} \sup_{t\leq t_0} \Big\lvert \frac{1}{N_n} \living^\msl(t) - h_2(\e^{-t} ) \Big\rvert \toinp 0. \eeq
\end{proposition}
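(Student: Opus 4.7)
The plan is to analyze $\living^\msl(t)$ by tracking the process at the ends of the iterations of \cref{algo:exploration}. Write $\hitting_j$ for the end of iteration $j$, $L_j:=\living^\msl(\hitting_j)$, and $D^\msr_{(k)}$ for the degree of the $k$-th discovered $\msr$-vertex. Since each paired $\msl$-half-edge corresponds to exactly one paired $\msr$-half-edge and waiting-to-be-paired $\msr$-half-edges are exhausted between iterations,
\begin{equation*}
L_j \;=\; \he_n \;-\; \sum_{k=1}^{j} D^\msr_{(k)}.
\end{equation*}
Because each iteration's $\msr$-vertex is found by matching to a uniformly chosen sleeping $\msr$-half-edge, $(D^\msr_{(k)})_{k\geq 1}$ is a size-biased reordering without replacement of the $\msr$-degree sequence. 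I realize this via a Feller-type coupling: attach independent $E_a\sim\Exp(1)$ to each $a\in\rightpar$ and order the $\msr$-vertices by increasing $E_a/d_a^\msr$. Parameterizing by the \emph{Feller time} $s$ and using a uniform LLN for the monotone indicators $\ind_{\{E_a\leq s d_a^\msr\}}$ (controlled via \cref{asmp:convergence}~{\rm(\ref{cond:limit_rdeg},\ref{cond:mean_rdeg})}), one obtains the deterministic limits
\begin{equation*}
\frac{J(s)}{M_n} \;\toinp\; 1-G_{D^\msr}(\e^{-s}), \qquad \frac{L(s)}{\he_n} \;\toinp\; \e^{-s}\,G_{\wit D^\msr}(\e^{-s}),
\end{equation*}
uniformly on compact subsets of $[0,-\log\wit q_0)$, where $J(s)$ counts the vertices visited by Feller time $s$.

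Next, I analyze the algorithm time $\hitting_j$. Conditionally on the visited sequence, in iteration $k$ the $D^\msr_{(k)}-1$ step-3 alarm rings have independent $\Exp(L_{k-1}-m)$ waiting times for $m=1,\ldots,D^\msr_{(k)}-1$, so summing and telescoping gives
\begin{equation*}
\hitting_j \;=\; \sum_{k=1}^{j}\sum_{m=1}^{D^\msr_{(k)}-1}\frac{1}{L_{k-1}-m} \;+\; o_{\sss\P}(1) \;\approx\; \log\frac{\he_n}{L_j} \;-\; \sum_{k=1}^{j}\frac{1}{L_{k-1}},
\end{equation*}
where the final approximation uses $\sum_{m=1}^{d-1}(L-m)^{-1}\approx\log\frac{L-1}{L-d}$ and telescoping. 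The first term is the hitting time of level $L_j$ for the ``standard'' death process (no instantaneous jumps), while the second is the \emph{time saved} by the $j$ step-2 instantaneous jumps. The elegant identification is that the explicit Feller formulas above yield $\rd J/\rd s=L(s)$ in the deterministic limit (using $\he_n=M_n\E[D^\msr]$ and $G'_{D^\msr}=\E[D^\msr]\,G_{\wit D^\msr}$), so the saved-time sum is a Riemann approximation to $\int_0^s \rd s'=s$. Substituting the explicit form of $L(s)$ then gives $\hitting_{J(s)}\toinp -\log G_{\wit D^\msr}(\e^{-s})$, and inverting this relation yields $\e^{-s}=\inv G_{\wit D^\msr}(\e^{-t})$ and thereby
\begin{equation*}
\frac{\living^\msl(t)}{N_n} \;\toinp\; \E[D^\msl]\,\e^{-t}\,\inv G_{\wit D^\msr}(\e^{-t}) \;=\; h_2(\e^{-t}).
\end{equation*}

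To upgrade to the uniform bound in \eqref{eq:living}, I will use a monotonicity argument: $t\mapsto \living^\msl(t)/N_n$ is non-increasing and $t\mapsto h_2(\e^{-t})$ is continuous and non-increasing on $[0,t_0]$ because $t_0<-\log\wit q_0$ keeps $\inv G_{\wit D^\msr}$ in the regular regime. Pointwise convergence on a dense subset (which follows from the steps above applied at each fixed $t$), together with continuity and monotonicity of the limit, then upgrades to uniform convergence by a standard Dini-type argument. The main obstacle is the simultaneous control of two correlated sources of randomness -- the size-biased partial sums $\sum_{k=1}^j D^\msr_{(k)}$ governing $L_j$, and the alarm-clock waiting-time sums $\sum_m \Exp(L_{k-1}-m)$ governing $\hitting_j$ -- together with the accumulation of errors in the harmonic-sum/telescoping approximation. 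The Feller coupling decouples these via the autonomous relation $\rd J/\rd s=L(s)$, reducing the argument to two Feller-type uniform LLNs plus a deterministic calculation; the restriction $t_0<-\log\wit q_0$ keeps $L(s)/\he_n$ bounded away from $0$, so the harmonic-sum approximation is uniformly valid throughout.
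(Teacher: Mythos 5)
Your approach is essentially the same as the paper's: you decompose the algorithm time into a standard death-process hitting time minus a saved time, and you identify the saved time with the Feller time of an alarm-clock process on the $\msr$ side (your ordering by $E_a/d_a^\msr$ is exactly the paper's process $\alarmproc^\msr(s)$, and your two Feller LLNs are the Janson--Luczak sleeping-vertex lemma applied to the rhs partition). The paper packages the saved-time identification as a distributional equality with the hitting time of $\alarmproc^\msr$, whereas you reach the same limit via a conditional-mean and Riemann-sum calculation motivated by $\rd J/\rd s = L(s)$; this is a repackaging of the same concentration argument rather than a genuinely different route.
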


We prove \cref{prop:living} in \cref{ss:livingproof}. We remark that for the $\RIGC$ and its underlying $\BCM$, postulating $q_1=0$ implies $\wit q_0 = 0$ and $-\log\wit q_0 = \infty$. It is hard to intuitively interpret the appearance of an \emph{inverse} generating function in (\ref{eq:defh2}-\ref{eq:living}). The deeper analysis of the process $\living^\msl(t)$ in \cref{ss:livingproof} reveals that it is due to \step{2} happening instantaneously. The proof of \cref{thm:BCM_giantcomp} in \cref{ss:BCM_giantcomp_proof} provides additional justification that the inverse must appear here in order to obtain \eqref{eq:fixedpoint}, the fixed point equation for the \emph{composition} of the generating functions, which is given an intuitive interpretation later in \cref{ss:BPapprox}. 

\subsection{Living half-edges}
\label{ss:livingproof}
In this section, we carry out the analysis of the process of living half-edges in \cref{algo:exploration} and in particular, prove \cref{prop:living}. 
We first introduce our approach and the ingredients of the proof and complete the proof before proving our lemmas.

\subsubsection{Asymptotics for the living half-edges: proof of Proposition \ref{prop:living}}
\label{sss:hittingtimes_decomp}

As remarked in the sketch of the proof in \cref{ss:results_BCM}, the process of living half-edges is significantly more complex in the bipartite case, as it is a death process with \emph{occasional} instantaneous jumps. In light of \cref{algo:exploration}, we can now explain precisely how this death process arises. Note that in each execution of \step2, as well as in each execution of \step3, one $\msl$-half-edge is paired, thus both steps correspond to a jump of size $-1$. As \step2 does not increase the time variable, it corresponds to an instantaneous jump (a jump with infinite rate). In \step3, we wait for the first $\Exp(1)$ alarm clock (see \cref{ss:exploration}) of an \emph{unmatched} $\msl$-half-edge to ring, which corresponds to each living half-edge dying at rate $1$. Thus in the death process, the regular jumps corresponding to iterations of \step3 happen with rate $i$ from position $i$. 

To determine how often the instanteneous jumps happen, consider that in \step2 of each iteration (of the outer \emph{while} loop), we pick a sleeping $\msr$-vertex in a \emph{size-biased} fashion, since a uniform unmatched $\msr$-half-edge is drawn. It is the remaining degree of the chosen $\msr$-vertex that determines the number of iterations of \step3 in the inner \emph{while} loop. In contrast, in the original algorithm (see \cref{rem:pairingalgo}) \step3 is executed exactly once in each iteration, thus the two jumps can be ``merged'' into a single jump of size $-2$ with rate $2i$ from position $2i$. Consequently, in the original algorithm it suffices to study a death process with only regular jumps (of size $2$). However in our case, such a merging is not possible at all, 
due to the fact that $\msr$-vertices are used up in an order determined by a size-biased reordering (formally defined below), hence the degree distribution is continuously changing throughout the course of the algorithm. Thus, we take an alternative approach, instead using \emph{hitting times}. Note that the initial value of the death process $\living^\msl(0) = \living^{\msl,\sss (n)}(0) = \he_n$ is deterministic. For $c\in[0,1]$, we define the hitting time process
\beq\label{eq:taudef} \hitting(c) := \min\{ t:\; \living^\msl(t)\leq c \he_n \}. \eeq
The following claim ensures that studying the hitting times is essentially equivalent to studying the death process:

\begin{claim}[Concentration of a death process and its hitting times]\label{claim:concentration}
For each $n\in\N$, let $\bigl(\nth X(t)\bigr)_{t\geq0}$ be a pure death process with deterministic initial condition $a_n := \nth X(0)\to\infty$ as $n\to\infty$. For $c\in(0,1]$, let $\nth\CT(c) := \min\{t:a_n^{-1}\nth X(t)\leq c\}$ and let $f:[0,\infty)\to(0,1]$ be a strictly decreasing function such that $f(0) = 1$ and both $f$ and its inverse $\inv f$ are continuous. Then the following two statements are equivalent:
\begin{enumeratei}
\item\label{stm:t} for any $t_0<\infty$, $\displaystyle \sup_{t\leq t_0} \bigl\lvert a_n^{-1}\nth X(t) - f(t)\bigr\rvert \toinp 0$,
\item\label{stm:c} for any $c_0\in(0,1)$, $\displaystyle \sup_{c\geq c_0} \bigl\lvert \nth\CT(c) - \inv f(c) \bigr\rvert \toinp 0$.
\end{enumeratei}
\end{claim}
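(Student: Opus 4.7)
The plan is to establish both implications via the inversion identity
\[
a_n^{-1}\nth X(t)\leq c \iff \nth\CT(c)\leq t,
\]
which holds because $\nth X$ is right-continuous and non-increasing. Combined with the uniform continuity of $f$ and $\inv f$ on compact subintervals (which follows from the stated continuity of both on $[0,\infty)$ and the range of $f$), and the strict monotonicity of $f$, this identity translates sup-norm control of the process directly into sup-norm control of its hitting times and vice versa.

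For \eqref{stm:t}$\Rightarrow$\eqref{stm:c}, fix $c_0\in(0,1)$ and $\eps>0$. Set $t_0:=\inv f(c_0)+\eps<\infty$ and let
\[
\delta:=\inf\bigl\{f(s-\eps)-f(s),\; f(s)-f(s+\eps)\;:\;s\in[\eps,t_0]\bigr\},
\]
which is strictly positive by continuity and strict monotonicity of $f$ on the compact interval. On the event $\{\sup_{t\leq t_0+\eps}|a_n^{-1}\nth X(t)-f(t)|<\delta/2\}$, which holds whp by \eqref{stm:t}, we argue pointwise: for each $c\in[c_0,1]$ set $t^\star:=\inv f(c)\in[0,t_0-\eps]$; then $a_n^{-1}\nth X(t^\star-\eps)\geq f(t^\star-\eps)-\delta/2\geq c+\delta/2>c$, forcing $\nth\CT(c)>t^\star-\eps$, and symmetrically $a_n^{-1}\nth X(t^\star+\eps)<c$, forcing $\nth\CT(c)\leq t^\star+\eps$. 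The boundary case $t^\star<\eps$ is trivial since $\nth\CT(c)\geq 0$. This gives $|\nth\CT(c)-\inv f(c)|\leq\eps$ uniformly in $c\geq c_0$.

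For \eqref{stm:c}$\Rightarrow$\eqref{stm:t}, fix $t_0<\infty$ and target $\eps>0$. Set $c_0:=f(t_0)/2>0$ and, by uniform continuity of $f$ on $[0,t_0+1]$, choose $\eps'>0$ so small that $f(t-\eps')-f(t+\eps')<\eps$ for every $t\in[0,t_0]$, with $\eps'\leq 1$ (extending $f$ constantly past $0$ to deal with $t<\eps'$). On the event $\{\sup_{c\geq c_0}|\nth\CT(c)-\inv f(c)|<\eps'\}$, which holds whp by \eqref{stm:c}, the inversion identity yields: if $c>f(t-\eps')$ then $\inv f(c)<t-\eps'$, hence $\nth\CT(c)<t$, hence $a_n^{-1}\nth X(t)\leq c$; and if $c<f(t+\eps')$ then $\inv f(c)>t+\eps'$, hence $\nth\CT(c)>t$, hence $a_n^{-1}\nth X(t)>c$. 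Letting $c\downarrow f(t-\eps')$ and $c\uparrow f(t+\eps')$ along values in $[c_0,1]$ (which is possible since $f(t\pm\eps')\in[c_0,1]$ by the choice of $c_0$), we obtain $f(t+\eps')\leq a_n^{-1}\nth X(t)\leq f(t-\eps')$, so $|a_n^{-1}\nth X(t)-f(t)|<\eps$ uniformly in $t\leq t_0$.

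The only delicate point, and the one requiring real care rather than symbol-pushing, is a uniform-in-$t$ (or uniform-in-$c$) choice of the continuity modulus: this is where the compactness of the relevant intervals $[0,t_0]$ and $[c_0,1]$, together with strict monotonicity of $f$, is essential. Aside from this and the routine handling of boundary effects near $t=0$ and $c=1$, the argument is a clean inversion/sandwiching.
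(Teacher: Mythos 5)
Your proof is correct and uses essentially the same sandwich/inversion strategy as the paper's own argument in \cref{apx:concentration_deathprocesses}: translate sup-norm control of the process into two-sided control of the hitting time via the monotonicity of $\nth X$, and pass the error through the uniform continuity of $f$ and $\inv f$ on compact intervals. The paper shifts by $\delta$ in the $c$-variable and uses uniform continuity of $\inv f$ on $[c_0/2,1]$; you instead shift by $\eps$ in the $t$-variable and bound the $f$-increment from below, which is the mirror image of the same computation, and you also write out the \eqref{stm:c}$\Rightarrow$\eqref{stm:t} direction that the paper only declares analogous. One small detail to add in your \eqref{stm:c}$\Rightarrow$\eqref{stm:t} step: letting $c\uparrow f(t+\eps')$ along $c\in[c_0,1]$ requires $f(t_0+\eps')>c_0=f(t_0)/2$, which does not follow from the choice of $c_0$ alone as you suggest, but does hold once $\eps'$ is taken small enough by continuity of $f$; this should simply be appended to the list of smallness conditions on $\eps'$.
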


We prove \cref{claim:concentration} in \cref{apx:concentration_deathprocesses}. 
\Cref{claim:concentration} is straightforwardly tailored to be applicable for $\living^\msl(t)$. It is stated in slightly more generality to allow application for similar processes that we define shortly and are necessary for the analysis.

To understand $\living^\msl(t)$, we compare it to a ``standard'' process $\livingstd(t)$, defined as the pure death process where each individual dies independently with rate $1$. 
That is, in the process $\livingstd(t)$ each jump happens with rate $i$ from state $\livingstd(t)=i$, and we set the same initial condition $\livingstd(0) := \he_n$. 
The processes $\living^\msl(t)$ and $\livingstd(t)$ can be coupled in an intuitive way by using the same realization of jumps, however $\livingstd(t)$ ``forgets'' about the occasional infinite rates; in other words, 
all jumps happen with rate $i$ from position $i$. 
Due to its simpler dynamics, the behavior of $\livingstd(t)$ is well understood, and hence so is the behavior of its hitting times
\beq\label{eq:tfulldef} \hittingstd(c) := \min\{t:\, \livingstd(t)\leq c\he_n\}. \eeq
However, in the process $\living^\msl(t)$, the instantaneous jumps due to \step{2} \emph{save us time}, which gives rise to a crucial correction term. We define the saved time as
\beq\label{eq:tskipdef} \hittingskip(c) := \hittingstd(c)-\hitting(c) > 0, \eeq
with $\hitting(c)$ and $\hittingstd(c)$ defined in \eqref{eq:taudef} and \eqref{eq:tfulldef}. Recall \eqref{eq:def_sizebiasing}, \eqref{eq:def_genfunc}, $D^\msl$ and $D^\msr$ from \cref{asmp:convergence} \cref{cond:limit_ldeg} and \cref{cond:limit_rdeg}, and that $\inv f$ denotes the inverse of a function $f$. We can summarize the asymptotics of $\hitting(c)$, $\hittingstd(c)$ and $\hittingskip(c)$ in the following lemma:

\begin{lemma}[Concentration of $\hitting(c)$]\label{lem:tau} For any $c_0 > 0$, as $n\to\infty$,
\begin{gather}
\label{eq:tau_full} \sup_{c\geq c_0} \bigl\lvert \hittingstd(c) + \log(c) \bigr\rvert \toinp 0,\\
\label{eq:tau_skip} \sup_{c\geq c_0} \bigl\lvert \hittingskip(c) + \log\bigl(\inv G_{D^{\msr,\star}}(c)\bigr) \bigr\rvert \toinp 0.
\end{gather}
Consequently, by \eqref{eq:tskipdef}, 
\beq\label{eq:tau} \sup_{c\geq c_0} \bigl\lvert \hitting(c) + \log(c) - \log\bigl(\inv G_{D^{\msr,\star}}(c)\bigr) \bigr\rvert \toinp 0.\eeq
\end{lemma}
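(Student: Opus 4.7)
The plan is to prove \eqref{eq:tau_full} and \eqref{eq:tau_skip} separately, then combine them via the definition \eqref{eq:tskipdef}. For \eqref{eq:tau_full}, I would assign each of the $\he_n$ $\msl$-half-edges an independent $\Exp(1)$ alarm clock $\xi_i$, exactly as in \cref{algo:exploration}, so that $\livingstd(t) = \sum_{i=1}^{\he_n} \ind_{\xi_i > t}$. The Glivenko--Cantelli theorem then gives $\sup_{t\geq 0} |\livingstd(t)/\he_n - e^{-t}| \toinp 0$, and \cref{claim:concentration} applied with $f(t)=e^{-t}$ (so $\inv f(c)=-\log c$) yields \eqref{eq:tau_full} directly.

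The heart of the argument is \eqref{eq:tau_skip}. I would couple $\livingstd$ and $\living^\msl$ by sharing the sojourn times: let $T_L \sim \Exp(L)$ for $L=1,\ldots,\he_n$ be independent, interpreted as the sojourn at state $L$, so that $\hittingstd(c) = \sum_{L=\lceil c\he_n\rceil+1}^{\he_n} T_L$. Inspecting \cref{algo:exploration}, the $k$-th iteration begins at state $L_k^{(0)} := \living^\msl(t_k^-)$, performs an instantaneous \step2 jump to $L_k^{(0)}-1$, and then uses the same $T_L$'s via successive \step3 calls to descend to $L_{k+1}^{(0)} = L_k^{(0)}-\rdeg(a_k)$. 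Summing over iterations telescopes to the exact identity
\beq\label{eq:plan_decomposition}
\hitting(c) = \hittingstd(c) - \sum_{k=1}^{K(c)} T_{L_k^{(0)}}, \qquad \hittingskip(c) = \sum_{k=1}^{K(c)} T_{L_k^{(0)}},
\eeq
where $K(c)$ is the number of \step2 executions performed by time $\hitting(c)$. Crucially, $(L_k^{(0)})_k$ is determined by the $\msr$-sampling alone and is therefore independent of $(T_L)_L$.

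To analyze \eqref{eq:plan_decomposition}, I would represent the size-biased reordering of $\msr$-vertices via independent clocks $\xi^\msr_j \sim \Exp(d_j^\msr)$ for $j\in\rightpar$, so that $a_k$ is the vertex with the $k$-th smallest $\xi^\msr_j$. Writing $\sigma$ for this auxiliary ``clock time'', standard empirical-process arguments under \cref{asmp:convergence}, combined with the identity $z G'_{D^\msr}(z) = \E[D^\msr]\, G_{D^{\msr,\star}}(z)$, yield, uniformly on compact $\sigma$-intervals,
\beq\label{eq:plan_size_biased}
\frac{L_{K(\sigma)+1}^{(0)}}{\he_n} = \frac{1}{\he_n} \sum_{j:\, \xi^\msr_j > \sigma} d_j^\msr \toinp G_{D^{\msr,\star}}(e^{-\sigma}), \qquad \frac{K(\sigma)}{M_n} \toinp 1-G_{D^\msr}(e^{-\sigma}).
\eeq
Inverting the first convergence identifies $\sigma(c):= -\log\,\inv G_{D^{\msr,\star}}(c)$ as the clock time at which $\living^\msl$ reaches $c\he_n$. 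Conditionally on $(L_k^{(0)})_k$, the sum in \eqref{eq:plan_decomposition} has mean $\sum_k 1/L_k^{(0)}$ and variance at most $K(c)/(c\he_n)^2 = O(1/\he_n) \to 0$, so it concentrates around its mean. A Riemann-sum change of variables using $dK/d\sigma \approx \he_n G_{D^{\msr,\star}}(e^{-\sigma})$ produces a clean cancellation with $L_{K(\sigma)+1}^{(0)} \approx \he_n G_{D^{\msr,\star}}(e^{-\sigma})$, giving
\beq
\sum_{k=1}^{K(\sigma)} \frac{1}{L_k^{(0)}} \;\approx\; \int_0^\sigma \frac{\he_n G_{D^{\msr,\star}}(e^{-\sigma'})}{\he_n G_{D^{\msr,\star}}(e^{-\sigma'})}\, d\sigma' \;=\; \sigma,
\eeq
so that $\hittingskip(c) \toinp \sigma(c) = -\log\,\inv G_{D^{\msr,\star}}(c)$, establishing \eqref{eq:tau_skip}. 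Combining with \eqref{eq:tau_full} via \eqref{eq:tskipdef} then yields \eqref{eq:tau}, and uniformity for $c\geq c_0$ is inherited from the uniformity of both ingredients together with the monotonicity of $\hitting$, $\hittingstd$, $\hittingskip$ in $c$. The main obstacle is making the Riemann-sum step uniform in $c$: the cancellation is clean at the level of limiting expectations, but one needs quantitative control of the empirical degree distribution reweighted by the order statistics $(\xi^\msr_{(k)})$, uniformly over $\sigma \in [0,\sigma(c_0)]$, including careful handling of the late iterations where $L_k^{(0)}$ approaches its floor $c\he_n$.
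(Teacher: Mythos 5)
Your treatment of \eqref{eq:tau_full} is correct and is a valid alternative to the paper's: the paper computes moments of $\hittingstd(c) \eqindis \sum_{i > c\he_n} E_i/i$ and applies Doob's inequality to a martingale, whereas you invoke Glivenko--Cantelli for the empirical survival function of the $\he_n$ alarm clocks and then \cref{claim:concentration}; both routes work, and the paper itself remarks that shorter proofs of \eqref{eq:tau_full} exist.

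For \eqref{eq:tau_skip} your decomposition \eqref{eq:plan_decomposition} and the identification of $(L_k^{(0)})_k$ with a size-biased vertex-exploration process via $\Exp(d_j^\msr)$ clocks are exactly the paper's ingredients (the paper phrases the clocks as $\Exp(1)$ alarms on each $\msr$-\emph{half-edge}, a vertex waking when its first alarm rings, which is the same object). The genuine gap is in the last step. You condition on $(L_k^{(0)})_k$, concentrate the sum around its conditional mean $\sum_k 1/L_k^{(0)}$, and then try a Riemann-sum change of variables, and you correctly flag that making this uniform in $c$ is where the difficulty lies. The paper's route avoids this entirely: it observes that the sum $\sum_k E_k'/L_k^{(0)}$ is not merely \emph{approximated} by a hitting time but \emph{is} one, distributionally, namely the hitting time $\min\{s:\alarmproc^\msr(s)\leq c\he_n\}$ of the auxiliary death process $\alarmproc^\msr$ driven solely by the $\msr$-clocks; this is proved by a short inductive coupling. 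Once that identity is in hand, \cref{lem:sleeping} applied on the $\msr$-side gives uniform concentration $\sup_{s\leq s_0}\lvert\he_n^{-1}\alarmproc^\msr(s)-G_{D^{\msr,\star}}(\e^{-s})\rvert\toinp 0$, and \cref{claim:concentration} converts this directly to the uniform hitting-time statement \eqref{eq:tau_skip}, with no conditional expectation, no variance estimate, and no Riemann sum to make uniform. In effect your plan re-derives a special case of \cref{claim:concentration} by hand; the missing idea is to notice that your $\sum_k T_{L_k^{(0)}}$ is itself a hitting time of the very $\msr$-side death process whose concentration you already have, so the already-proved machinery carries the load.
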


We prove \cref{lem:tau} in \cref{sss:hittingtimes_proof}. 
We point out the appearance of the inverse generating function in the asymptotics of the time saved $\hittingskip(c)$, which is related to the size-biased reordering. However, also note that the inverse generating function is of the distribution $D^{\msr,\star} = \wit D^\msr + 1$. Next, we 
prove \cref{prop:living} subject to \cref{claim:concentration,lem:tau}.

\begin{proof}[Proof of \cref{prop:living} subject to \cref{claim:concentration,lem:tau}]
By \eqref{eq:tau}, $\hitting(c)$ concentrates around
\beq\label{eq:finv} \inv f(c) = - \log \,c + \log\bigl(\inv G_{D^{\msr,\star}}(c)\bigr).\eeq
Thus, by \cref{claim:concentration}, $\living^\msl(t)/\he_n$ concentrates around $f$. We claim that $f$ can be expressed as
\beq\label{eq:ft} c = f(t) = \e^{-t} \inv G_{\wit D^\msr} (\e^{-t} ).\eeq
We show that the inverse of the above function $f$ is indeed $\inv f$ by rearranging for $t$ in a clever way. Let $s := \inv G_{\wit D^\msr} (\e^{-t} )$, then $\e^{-t} = G_{\wit D^\msr}(s)$, and $c = G_{\wit D^\msr}(s) \cdot s = G_{D^{\msr,\star}}(s)$, by \eqref{eq:def_sizebiasing} and \eqref{eq:def_genfunc}. Hence
\beq\label{eq:propliving_1} \e^{-t} = G_{\wit D^\msr}(s) = \frac{G_{D^{\msr,\star}}(s)}{s} = \frac{c}{\inv G_{D^{\msr,\star}}(c)}. \eeq
Applying the function $-\log(\cdot)$ on both sides of \eqref{eq:propliving_1}, and noting that $t = \inv f(c)$, yields \eqref{eq:finv} as required, and we conclude that $\sup_{t\leq t_0} \bigl\lvert \living^\msl(t) / \he_n - f(t) \bigr\rvert \toinp 0$. By \eqref{eq:defh2}, $h_2(\e^{-t}) = \E[D^\msl] f(t)$, and $\he_n / N_n = \E[D_n^\msl] \to \E[D^\msl]$ by \cref{rem:asmp_consequences} \cref{cond:partition_ratio} and \cref{asmp:convergence} \cref{cond:mean_ldeg}, thus \eqref{eq:living} follows. This concludes the proof of 
\cref{prop:living} subject to \cref{claim:concentration,lem:tau}.
\end{proof}

\subsubsection{Concentration of the hitting times}
\label{sss:hittingtimes_proof}
This section is dedicated to proving \cref{lem:tau}. We prove \eqref{eq:tau}, \eqref{eq:tau_full} and \eqref{eq:tau_skip} in this order.

\begin{proof}[Proof of \eqref{eq:tau}, subject to \eqref{eq:tau_full} and \eqref{eq:tau_skip}]
Combining \eqref{eq:tau_full} and \eqref{eq:tau_skip} through the triangle inequality yields that, for any $\eps > 0$ fixed,
\beq \begin{split} &\P\biggl( \sup_{c\geq c_0} \bigl\lvert \hitting(c) + \log(c) - \log\bigl(\inv G_{D^{\msr,\star}}(c)\bigr) \bigr\rvert > \eps \biggr) \\
&\leq \P\biggl( \sup_{c\geq c_0} \bigl\lvert \hittingstd(c) + \log(c) \bigr\rvert + \bigl\lvert - \hittingskip(c) - \log\bigl(\inv G_{D^{\msr,\star}}(c)\bigr) \bigr\rvert > \eps \biggr) \\
&\leq \P\biggl( \sup_{c\geq c_0} \bigl\lvert \hittingstd(c) + \log(c) \bigr\rvert > \eps/2 \biggr)
+ \P\biggl( \sup_{c\geq c_0} \bigl\lvert \hittingskip(c) + \log\bigl(\inv G_{D^{\msr,\star}}(c)\bigr) \bigr\rvert > \eps/2 \biggr) \to 0 \end{split} \eeq
as $n\to\infty$. That is, by the definition of convergence in probability, \eqref{eq:tau} holds.
\end{proof}

\begin{proof}[Proof of \eqref{eq:tau_full}]
Recall the ``standard'' pure death process $\livingstd(t)$ and its hitting times \eqref{eq:tau_full} from \cref{sss:hittingtimes_decomp}. Also recall that the process jumps from state $i$ to state $i-1$ at rate $i$. Using that $\Exp(i) \eqindis \Exp(1)\big/i$,
\beq\label{eq:taudecomp} \hittingstd(c) = \hittingstd \Bigl(\frac{\lfloor c\he_n \rfloor}{\he_n}\Bigr)
\eqindis \sum_{i=\lfloor c\he_n\rfloor +1}^{\he_n} \frac{\nth E_i}{i}, \eeq
where for any fixed $n$, $(\nth E_i)_{i\in\Z^+}$ are independent $\Exp(1)$ random variables. For convenience, 
we define the index set
\beq\label{eq:indexset} \indexes_c = \nth\indexes_c := \{ \lfloor c\he_n \rfloor + 1 \leq i \leq \he_n\}. \eeq
Then, for any $c$ fixed, using \eqref{eq:taudecomp} and recognizing the Riemann-approximation sums, 
\beq\label{eq:taufullE} \E[\hittingstd(c)] = \sum_{i\in\indexes_c} \frac{1}{i}
= \log(\he_n) - \log(c\he_n) + O\bigl(N_n^{-1}\bigr)
= - \log(c) + O\bigl(N_n^{-1}\bigr), \eeq
and
\beq\label{eq:taufullVar} \Var\bigl(\hittingstd(c)\bigr)= \sum_{i\in\indexes_c} \frac{1}{i^2} < \: \sum_{\mathclap{i=\lfloor c\he_n\rfloor+1}}^{\infty} \: \frac1{i^2} \to 0 \eeq
as $n\to\infty$, since $\he_n\to\infty$ (and $c>0$). For $s\geq0$, define the process 
\beq Y(s) = \nth Y(s) := \hittingstd(\e^{-s}) - \E\bigl[\hittingstd(\e^{-s})\bigr] = \sum_{\mathclap{i\in\nth\indexes_{\exp\{-s\}}}} \frac{\nth E_i-1}{i}. \eeq
Note that $Y$ is a zero-mean martingale and thus $Y^2(s)$ is a non-negative submartingale. We apply Doob's martingale inequality and \eqref{eq:taufullVar} to obtain the following, for any fixed $\eps>0$ and $c_0>0$, with $s_0:=-\log(c_0)<\infty$,
\beq\label{eq:proof_taufull_1} \bal \P\Bigl( \textstyle \sup_{c\geq c_0} &\bigl\{\hittingstd(c) - \E[\hittingstd(c)]\bigr\}^2 \geq \eps \Bigr) 
= \P\Bigl( \textstyle \sup_{s\leq s_0} Y^2(s) \geq \eps \Bigr) \\
&\leq \frac{\E\bigl[Y^2(s_0)\bigr]}{\eps}
= \frac{\Var\bigl(Y(s_0)\bigr)}{\eps} 
= \frac{\Var\bigl(\hittingstd(c_0)\bigr)}{\eps} \to 0 \eal \eeq
as $n\to\infty$. It follows that
\beq \sup_{c\geq c_0} \bigl\lvert \hittingstd(c) - \E[\hittingstd(c)] \bigr\rvert \toinp 0. \eeq
Consequently, by \eqref{eq:taufullE}, we can bound
\beq \sup_{c\geq c_0} \bigl\lvert \hittingstd(c) + \log(c) \bigr\rvert 
\leq \sup_{c\geq c_0} \bigl\lvert \hittingstd(c) - \E[\hittingstd(c)] \bigr\rvert + o_{\sss\P}(1) + O\bigl(N_n^{-1}\bigr) \toinp 0. \eeq
This concludes the proof of \eqref{eq:tau_full}.
\end{proof}

We remark that \cite[Lemma 6.1]{JanLuc2009} is applicable to $\livingstd(t)$, which provides a shorter alternative proof for \eqref{eq:tau_full}. However, we adopted the proof above to shed light on the decomposition \eqref{eq:taudecomp}, preparing for the proof of \eqref{eq:tau_skip}, which is much more interesting and insightful.

\begin{proof}[Proof of \eqref{eq:tau_skip}]
Recall the definition of the process $\livingstd(t)$ and its hitting times $\hittingstd(c)$ from \cref{sss:hittingtimes_decomp}. The decomposition in \eqref{eq:taudecomp} is equivalent to:
\beq\label{eq:taudecomp_easy} \hittingstd(c) \eqindis \sum_{i\in\indexes_c} \frac{\nth E_i}{i}. \eeq
Next, we derive a similar decomposition for $\hitting(c)$. Let $\indexesskip_c\subseteq\indexes_c$ denote the set of such indices $i\in\indexes_c$ that the jump from position $i$ to position $i-1$ in the process $\living^\msl(t)$ happened instantaneously, i.e., due to \step{2}. (We provide a formal definition of the set $\indexesskip_c$ later.) Clearly, since both processes are defined using the same realization of jumps, the difference in $\hitting(c)$ and $\hittingstd(c)$ only arises due to the different jump rates from positions $i\in\indexesskip_c$. While rate $i$ in $\livingstd(t)$ results in the term $\nth E_i/i$, the instantaneous jump in $\living^\msl(t)$ results in a $0$ term. That is, we can write
\beq \hitting(c) = \sum_{i\in\indexes_c\setminus\indexesskip_c} \frac{\nth E_i}{i} + \sum_{i\in\indexesskip_c} 0 
= \sum_{i\in\indexes_c\setminus\indexesskip_c} \frac{\nth E_i}{i}, \eeq
and necessarily the saved time is
\beq\label{eq:tskipsum} \hittingskip(c) = \hittingstd(c) - \hitting(c) 
= \sum_{i\in\indexesskip_c} \frac{\nth E_i}{i}. \eeq
We analyze $\hittingskip(c)$ through the index set $\indexesskip_c$. Recall that we discover a new $\msr$-vertex exactly when \step{2} is executed. 
This happens exactly when all half-edges of the previous $\msr$-vertex have been paired.\footnote{Here, we ignore the potential \step{1} in between, as \step1 does not pair any half-edges and consequently does not correspond to any jump in the process $\living^\msl(t)$.} 
Cumulatively, we execute \step{2} for the $(j+1)^\mathrm{st}$ time when all half-edges of the first $j$ $\msr$-vertices are paired. Let us denote the $\msr$-degree of the $j\ith$ explored $\msr$-vertex by $d_{\pi(j)}^\msr$. Clearly, $(d_{\pi(j)}^\msr)_{j\in[M_n]}$ is a random reordering of $\bitd^\msr$, or equivalently, $(\pi(j))$ is a random permutation. We pick the next $\msr$-vertex to explore by choosing a uniform unpaired $\msr$-half-edge, thus $\msr$-vertices are always chosen in a \emph{size-biased} fashion wrt their degrees. That is, $\msr$-vertices are explored in the order defined by a size-biased reordering. Define $\usedup_{j} := \{ \pi(1),\ldots,\pi(j) \} \subset [M_n]$, the random set of indices chosen (used) in the first $j$ steps, then the distribution of $\pi$ is given by 
\beq\label{eq:sizebiased_reorder} \P\bigl( \pi(j) = k \bcond \usedup_{j-1} \bigr) = 
\begin{cases}
0 &\text{for $k\in\usedup_{j-1}$}, \\
\displaystyle \frac{d_k^\msr}{\sum_{i\in[M_n]\setminus\usedup_{j-1}} d_i^\msr} &\text{for $k\in[M_n]\setminus\usedup_{j-1}$},
\end{cases}
\eeq
Denote the partial sums of the first $j$ $\msr$-degrees in this reordering by
\beq \partialsum_j := \sum_{i=1}^{j} d_{\pi(i)}^\msr, \eeq
where the empty sum $\partialsum_0=0$ by convention. Then $\he_n - \partialsum_j$ gives the state of $\living^\msl(t)$ after we finish exploring the $j\ith$ $\msr$-vertex, thus \step{2} must be executed again and from this position, an instantaneous jump happens. We can now give an alternative, formal definition of the index set 
\beq \indexesskip_c = \nth\indexesskip_c = \bigl\{ \he_n-\partialsum_j,\,j\in[M_n] \bigr\} \cap\nth\indexes_c. \eeq
Define
\beq\label{eq:jmaxn} \maxindex(c) := \max\bigl\{j:\, \he_n - \partialsum_j > c\he_n \bigr\}, \eeq
then we can rewrite \eqref{eq:tskipsum} as
\beq\label{eq:tskipdecomp} \hittingskip(c) = \sum_{j=0}^{\maxindex(c)} \frac{\mth E_j}{\he_n-\partialsum_j}, \eeq
where the set $(\mth E_j)_{j\in\N}$ is a (possibly reordered) subset of $(\nth E_i)_{i\in\Z^+}$, hence it is composed of iid $\Exp(1)$ random variables. We give a convenient alternative probabilistic interpretation to the decomposition in \eqref{eq:tskipdecomp}, allowing us to relate it to a process that we already understand.

We define a process $\alarmproc^\msr(s) = \alarmproc^{\msr,\sss (n)}(s)$ in continuous time $s\geq0$ on the $\msr$-half-edges, completely independent of the exploration algorithm. The process $\alarmproc^\msr(s)$ follows dynamics analogous to $\wih \sleeping^\msl(t)$,\footnote{We avoid the intuitive notion $\wih\sleeping^\msr(s)$ to emphasize that this process is not related to the exploration algorithm. We use a separate time variable $s$ rather than $t$ for the same reason.} 
formally defined as follows.
Initially, all $\msr$-vertices and $\msr$-half-edges are sleeping, and we assign independent $\Exp(1)$ alarm clocks to each $\msr$-half-edge. An $\msr$-vertex and \emph{all} its half-edges are woken up (and never return to sleeping) when the alarm clock on \emph{any} of the half-edges goes off. The process $\alarmproc^\msr(s)$ keeps track of the number of sleeping $\msr$-half-edges. The hitting times of this process correspond to $\hittingskip(c)$, formally,
\beq\label{eq:ztskiprelation} \bigl(\min\bigl\{ s:\, \alarmproc^\msr(s)\leq c\he_n \bigr\} \bigr)_{1\geq c>0}
\eqindis \bigl( \hittingskip(c) \bigr)_{1\geq c>0}, \eeq
where the distributional equality is meant as stochastic processes. 
We prove \eqref{eq:ztskiprelation} by induction on the number of awake $\msr$-vertices. Clearly, $\alarmproc^\msr(0)=\he_n=\he_n-\partialsum_0$. Assume the number of sleeping $\msr$-half-edges to be $\alarmproc^\msr(s)=\he_n-\partialsum_j$. Since the alarm clocks of awake $\msr$-half-edges can be ignored, the time we have to wait for the next $\msr$-half-edge $y$ to wake up has distribution $\mth E_{j}/ (\he_n - \partialsum_j)$. The $\msr$-half-edge $y$ is chosen uar among the sleeping ones, hence the incident $\msr$-vertex $a$ is chosen in a size-biased fashion. That is, $\rdeg(a) = d_{\pi(j+1)}^\msr$, where $\pi$ is a random permutation with distribution \eqref{eq:sizebiased_reorder}. Also note that all $\msr$-half-edges incident to $a$ are woken up at once, thus the change in $\alarmproc^\msr(s)$ is $-\rdeg(a)$. Hence
\beq \min \bigl\{s: \alarmproc^\msr(s) = \he_n-\partialsum_{j+1} \bigr\} - \min \bigl\{s: \alarmproc^\msr(s) = \he_n-\partialsum_j \bigr\}
= \frac{E'_{j}}{\he_n - \partialsum_j}, \eeq
where $E'_j$ is an $\Exp(1)$ random variable, independent of everything else. 
Then by induction,
\beq \min \bigl\{s: \alarmproc^\msr(s) = \he_n - \partialsum_{k+1} \bigr\}
= \sum_{j=0}^k E'_j \big/ (\he_n - \partialsum_j). 
\eeq
To determine the hitting time $\min\{s:\, \alarmproc^\msr(s)\leq c\he_n\}$, we want the smallest $k$ such that $\he_n - \partialsum_{k+1} \leq c\he_n$. Since $\alarmproc^\msr(s)$ is non-increasing, this is equivalent to finding the largest $k$ such that $\he_n - \partialsum_k > c\he_n$, which is straightforwardly $\maxindex(c)$ by \eqref{eq:jmaxn}. Thus
\beq \min\{s:\, \alarmproc^\msr(s)\leq c\he_n\} 
= \min\bigl\{s:\, \alarmproc^\msr(s)=\he_n- \partialsum_{\maxindex(c)+1}\bigr\} \\
= \sum_{j=0}^{\maxindex(c)} \frac{E'_j}{\he_n - \partialsum_j}, 
\eeq
where we recognize a decomposition analogous to that of $\hittingskip(c)$ from \eqref{eq:tskipdecomp}, with $(E'_j)_{j\in\N}$ rather than $(\mth E_j)_{j\in\N}$. However, as both sets contain iid $\Exp(1)$ random variables, the two processes evolve in the exact same way and the distributional identity \eqref{eq:ztskiprelation} follows.

Now all that is left is to determine the asymptotics of $\alarmproc^\msr(s)$ and apply \cref{claim:concentration} to translate it into the asymptotics of $\hittingskip(c)$. As $\alarmproc^\msr(s)$ is defined analogously to $\wih\sleeping^\msl(t)$, following the same dynamics on the opposite partition, we can use the results in \cref{lem:sleeping} for $\alarmproc^\msr(s)$, with the exchange of lhs and rhs quantities. Replacing the $\msl$-degree distribution by the $\msr$-degree distribution in \eqref{eq:defh1} and \eqref{eq:sleeping_halfedge} yields 
that for any $s_0$ fixed, as $M_n\to\infty$,
\beq\label{eq:zconcentration} \sup_{s\leq s_0} \big\lvert M_n^{-1} \alarmproc^\msr(s) - \E[D^\msr]\e^{-s}G_{\wit D^\msr}(\e^{-s} ) \big\rvert \toinp 0. \eeq
Since $z \cdot G_{\wit D^\msr}(z) = G_{D^{\msr,\star}}(z)$ by \eqref{eq:def_sizebiasing} and \eqref{eq:def_genfunc}, $\E[D_n^\msr]\to\E[D^\msr]$ by \cref{asmp:convergence} \cref{cond:mean_rdeg} and $\he_n = M_n \E[D_n^\msr]$ by \eqref{eq:def_halfedges}, we can rewrite \eqref{eq:zconcentration} as 
\beq \sup_{s\leq s_0} \big\lvert \he_n^{-1} \alarmproc^\msr(s) - G_{D^{\msr,\star}}(\e^{-s} ) \big\rvert \toinp 0, \eeq
for any $s_0$ fixed. If $c=f(s) = G_{D^{\msr,\star}}(\e^{-s})$, then $s=\inv f(c) = -\log\bigl(\inv G_{D^{\msr,\star}}(c)\bigr)$. Then by \cref{claim:concentration} and \eqref{eq:ztskiprelation}, for any $c_0$ fixed,
\beq \sup_{c\geq c_0} \bigl\lvert \hittingskip(c) + \log\bigl(\inv G_{D^{\msr,\star}}(c)\bigr) \bigr\rvert \toinp 0. \eeq
This concludes the proof of \eqref{eq:tau_skip}.
\end{proof}

\subsection{The giant of the BCM: proof of Theorem \ref{thm:BCM_giantcomp}}\label{ss:BCM_giantcomp_proof} 
In this section, we combine the insight gained above and prove \cref{thm:BCM_giantcomp}. 
For technical reasons, our proof requires that $\eta_\msl > 0$, with $\eta_\msl$ defined in \cref{thm:giantcomp}. 
Note that the composition $G_{\wit D^\msr} \circ G_{\wit D^\msl}$ from \eqref{eq:fixedpoint} is the generating function of the random sum 
\beq\label{eq:def_Nr} N^\msr := \sum_{i=1}^{\wit D^\msr} \wit D_{(i)}^\msl, \eeq
where $\wit D_{(i)}^\msl$ are iid copies of $\wit D^\msl$ and independent of $\wit D^\msr$. (This random variable will re-appear in \cref{ss:BPapprox} where it receives an intuitive explanation.) Note that, by properties of generating functions, $\eta_\msl = 0$ exactly when $G_{N^\msr}(0) = \P(N^\msr = 0) = 0$, which is equivalent to $\P(\wit D^\msl = 0) = \P(\wit D^\msr = 0) = 0$ by \eqref{eq:def_Nr}, which in turn is equivalent to $p_1 = q_1 = 0$ by \eqref{eq:def_sizebiasing}. 
In the proof, we shall impose the condition $q_1>0$, with $q_1$ defined in \cref{asmp:convergence} \cref{cond:limit_rdeg}, to ensure that $\eta_\msl>0$. Hence we first show that proving \cref{thm:BCM_giantcomp} for $q_1>0$ is sufficient:

\begin{claim}[Reduction to the case $q_1>0$]\label{claim:reduction}
\Cref{thm:BCM_giantcomp} with $q_1>0$ implies \cref{thm:BCM_giantcomp} for $q_1=0$.
\end{claim}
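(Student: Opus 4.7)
The plan is to reduce the $q_1=0$ case to the $q_1>0$ case by augmenting the $\BCM$ with degree-1 pendants on both sides, applying the theorem in the augmented setting, and transferring back via a coupling and limit argument as the perturbation parameter $\eps\to 0$. For $\eps>0$, form augmented sequences $\bitd^{\msl,(\eps,n)},\bitd^{\msr,(\eps,n)}$ by appending $k_n:=\lfloor\eps N_n\rfloor$ entries of $1$ to each of $\bitd^\msl$ and $\bitd^\msr$. This preserves bipartite balance (both partitions gain $k_n$ half-edges), and the augmented empirical distributions satisfy \cref{asmp:convergence} with limiting pmfs
\[ p_k^{(\eps)} = \frac{p_k + \eps\,\ind_{k=1}}{1+\eps}, \qquad q_k^{(\eps)} = \frac{\gamma q_k + \eps\,\ind_{k=1}}{\gamma+\eps}, \]
so that $q_1^{(\eps)}\geq \eps/(\gamma+\eps)>0$ and $p_2^{(\eps)}+q_2^{(\eps)}<p_2+q_2<2$. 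A short calculation shows that the criticality parameter $\E[\wit D^{\msl,(\eps)}]\E[\wit D^{\msr,(\eps)}]$ decreases strictly in $\eps$ but tends to $\E[\wit D^\msl]\E[\wit D^\msr]$ as $\eps\to 0$, so supercriticality passes to the augmented model for $\eps$ small enough, while subcritical or critical original cases map to strictly subcritical augmented cases. Apply \cref{thm:BCM_giantcomp} (the $q_1>0$ case, by hypothesis of the claim) to the augmented $\BCM$.

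To take $\eps\to 0$, establish continuity of all limiting constants. The explicit formulas give uniform convergence $G_{\wit D^{\msl,(\eps)}}\to G_{\wit D^\msl}$ and $G_{\wit D^{\msr,(\eps)}}\to G_{\wit D^\msr}$ on $[0,1]$; under the assumption $p_2+q_2<2$, the smallest fixed point of the composition in \eqref{eq:fixedpoint} is isolated and depends continuously on the two generating functions, whence $\eta_\msl^{(\eps)}\to \eta_\msl$, $\eta_\msr^{(\eps)}\to \eta_\msr$, and $\xi_\msl^{(\eps)}\to \xi_\msl$. Next, couple the original and augmented matchings using the sequential pairing of \cref{rem:pairingalgo}: first pair all $\he_n$ original $\msl$-half-edges (each uniformly with an unpaired $\msr$-half-edge, either original or pendant), then pair the $k_n$ pendant $\msl$-half-edges with the remaining $\msr$-half-edges. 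A hypergeometric concentration bounds the number of original $\msl$-half-edges matched to pendant $\msr$-half-edges by $O(\eps N_n)$ whp, and symmetrically for pendant $\msl$-half-edges matched to original $\msr$-half-edges. Deleting the pendants leaves $O(\eps N_n)$ dangling original half-edges on each side; uniformly re-pairing them recovers a uniform matching of the original $\BCM$, as a short symmetry check on the sequential pairing confirms.

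The main obstacle is showing that the giant component is \emph{stable} under these $O(\eps N_n)$ edge modifications between the augmented and original matchings, namely that $\abs{\comp_{1,\msb}\cap\leftpar}$ and $\abs{\comp_{1,\msb}^{(\eps)}\cap\leftpar}$ differ by $o_{\sss\P}(N_n)+O(\eps N_n)$. Since the second-largest component of the augmented $\BCM$ has size $o_{\sss\P}(N_n)$ by \cref{thm:BCM_giantcomp} applied in the augmented setting, each added or removed edge changes the giant by at most $\abs{\comp_{2,\msb}^{(\eps)}}+O(1)=o_{\sss\P}(N_n)$; a summation over the $O(\eps N_n)$ re-pairings combined with the fact that the re-pairings are uniform (so typical endpoints lie in small components) gives the stability bound. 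Combining with the continuity from the previous paragraph, $\abs{\comp_{1,\msb}\cap\leftpar}/N_n$ and $\xi_\msl^{(\eps)}$ differ by $o_{\sss\P}(1)+O(\eps)$ whp; sending $\eps\to 0$ after $n\to\infty$ yields \eqref{eq:leftsize}. The remaining conclusions \eqref{eq:deg_k}, \eqref{eq:edges}, the uniqueness of the giant, and the subcritical case $\abs{\comp_{1,\msb}}/(N_n+M_n)\toinp 0$ follow from the same coupling and continuity arguments.
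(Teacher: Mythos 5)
Your approach is genuinely different from the paper's, and it has a real gap. The paper does not append pendant vertices at all: it modifies only the $\msr$-degree sequence, by taking $\eps M_n$ of the $\msr$-vertices of minimal degree $\dmin{\msr}\geq 2$ and \emph{cutting} each into $\dmin{\msr}$ degree-$1$ vertices. This has two decisive advantages you give up. First, it preserves the half-edge count $\he_n$ exactly (so there is no need to also touch $\bitd^\msl$, and hence no need for a coupling of two different random matchings — the very same uniform $\bmatch_n$ serves for both the original and the modified degree sequences). Second, the cutting operation is \emph{monotone}: replacing an $\msr$-vertex by several degree-$1$ copies can only disconnect components, never merge them, so $\abs{\comp_1^{\text{mod}}\cap\leftpar}\leq\abs{\comp_1^{\text{orig}}\cap\leftpar}$ deterministically. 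This gives $\liminf_n \abs{\comp_{1,\msb}\cap\leftpar}/N_n\geq\xi_\msl(\eps)$ for free, and since $\xi_\msl(\eps)\to\xi_\msl=1$ as $\eps\to 0$ (when $q_1=0$ forces $\eta_\msl=0$, recall the discussion just before the claim), the trivial upper bound $\abs{\comp_{1,\msb}\cap\leftpar}\leq N_n$ already matches the lower bound. No stability-under-rewiring argument is needed.

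Your route, by contrast, augments both sides and then must control the effect of the $O(\eps N_n)$ re-pairings, and this is exactly where the argument breaks. The assertion that ``each added or removed edge changes the giant by at most $\abs{\comp_{2,\msb}^{(\eps)}}+O(1)$'' is correct for an \emph{added} edge (merging a non-giant component of size at most $\abs{\comp_2}$), but it is \emph{not} correct for a \emph{removed} edge: deleting one edge of the giant could a priori split it into two macroscopic pieces, and the smaller piece is not bounded by the pre-deletion $\abs{\comp_2}$. Establishing that this does not happen (essentially a $2$-edge-connectivity / sprinkling statement for the $\BCM$ giant) would be a substantial lemma in its own right. Even granting that each modification perturbs the giant by $o_{\sss\P}(N_n)$, the summation ``over the $O(\eps N_n)$ re-pairings'' does not close: $O(\eps N_n)\cdot o_{\sss\P}(N_n)$ is not $o_{\sss\P}(N_n)$, and the appeal to ``typical endpoints lie in small components'' is not a proof — a constant fraction $\Theta(\xi_\msl^{(\eps)})$ of the re-paired half-edges have an endpoint in the giant, so you cannot discount them. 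As written, the stability step is the heart of the proof and it is missing. If you want to salvage a pendant-based argument you would, at minimum, need a genuine stability lemma for the $\BCM$ giant or an independent upper bound, e.g.\ via the local weak convergence of \cite{vdHKomVad18locArxiv} as the paper does for the non-supercritical case; but the paper's monotone cutting construction sidesteps all of this.
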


\begin{proof} 
Assume that \cref{thm:BCM_giantcomp} holds for $q_1>0$, and we are given a graph sequence with $q_1=0$. 
In the following, we introduce a modification of the graph sequence, parametrized by $\eps$, such that $q_1(\eps)>0$ for all $\eps>0$, while we get better approximations of the original graph sequence as $\eps\to 0$. 
Let $\dmin{\msr} := \min \{ k\in\Z^+:\, q_k>0 \}\geq2$, i.e., the minimal degree of the \emph{asymptotic} $\msr$-degree distribution, and fix $\eps$ such that $0<\eps<q_{\dmin{\msr}}$. Then (for $n$ large enough) we cut $\eps M_n$ $\msr$-vertices of degree $\dmin{\msr}$ into vertices of degree $1$, i.e., we replace each of them by $\dmin{\msr}$ $\msr$-vertices of degree $1$. The empirical $\msl$-degrees $D_\eps^{\msr,\sss (n)}$ then converge as $n\to\infty$ to a modified limit $D_\eps^\msr$ with pmf:
\beq q_k(\eps) := \begin{cases}
\displaystyle \frac{\eps \dmin{\msr}}{1+\eps(\dmin{\msr}-1)} &\text{for $k=1$,}\\
\displaystyle \frac{q_{\dmin{\msr}}-\eps}{1+\eps(\dmin{\msr}-1)} &\text{for $k=\dmin{\msr}$,}\\
\displaystyle \frac{q_k}{1+\eps(\dmin{\msr}-1)} &\text{otherwise.}
\end{cases} \eeq
Denote the smallest fixed point of $G_{\wit{D_\eps^\msr}}\circ G_{\wit{D^\msl}}$ by $\eta_\msl(\eps)>0$ and define $\xi_\msl(\eps):= 1 - G_{D_\eps^\msr}\bigl(\eta_\msl(\eps)\bigr)<1$. (Recall $\xi_\msl$ from \cref{thm:giantcomp}.) By our assumptions, \cref{thm:BCM_giantcomp} holds for the modified graph sequence, and consequently formulas (\ref{eq:leftsize}-\ref{eq:edges}) hold with $\eta_\msl(\eps)>0$ and $\xi_\msl(\eps)<1$. We now let $\eps \to 0$, then $D_\eps^\msr\toindis D^\msr$, 
thus $G_{\wit{D_\eps^\msr}} \to G_{\wit D^\msr}$ pointwise on $[0,1]$, which implies $\eta_\msl(\eps) \to \eta_\msl=0$ and $\xi_\msl(\eps) \to \xi_\msl=1$. The above cutting operation can only decrease the number of $\msl$-vertices in each connected component: we are duplicating $\msr$-vertices only and components may become disconnected. Then considering that $\xi_\msl(\eps) \to 1$, \eqref{eq:leftsize} must extend to $\xi_\msl=1$ as well, and (\ref{eq:deg_k}-\ref{eq:edges}) follow for $\eta_\msl=0$.
\end{proof}

\paragraph*{\textbf{Identifying components in the exploration}} 
In the following, wlog we assume that $q_1>0$ or equivalently, $\wit q_0 >0$, with $\wit q_0 = q_1/\E[D^\msr]$ defined in \cref{prop:living}. 
Recall \eqref{eq:defh1} and \eqref{eq:defh2} and define, for $z\in[\wit q_0,1]$, 
\beq\label{eq:defH} H(z) := h_2(z)-h_1(z) = \E[D^\msl] z \bigl( \inv G_{\wit D^\msr}(z) - G_{\wit D^\msl}(z) \bigr). \eeq
For convenience, denote
\beq\label{eq:def_func_h} h(z):= \inv G_{\wit D^\msr}(z) - G_{\wit D^\msl}(z). \eeq
Recall the process $\wih\actives^\msl(t)$ from \eqref{eq:atilde} that approximates $\actives^\msl(t)$. By \cref{lem:sleeping,prop:living}, for any $t_0<-\log\wit q_0$, $\wih\actives^\msl(t)$ satisfies
\beq\label{eq:H_Atilde_relation} \sup_{t\leq t_0} \Bigl\lvert \frac{1}{N_n} \wih \actives^\msl(t) - H \bigl(\e^{-t}\bigr) \Bigr\rvert \toinp 0. \eeq
Recall that we start exploring a new component when \step{1} is executed, for which $\actives^\msl(t)=0$ is a necessary condition. By the intuition that $\actives^\msl(t)/N_n \approx \wih\actives^\msl(t)/N_n$ from \cref{lem:step1effect}, and \eqref{eq:H_Atilde_relation}, we want to find the zero(s) of $t\mapsto H\bigl(\e^{-t}\bigr)$ on $\R^+$. By (\ref{eq:defH}-\ref{eq:def_func_h}), the zeros of this function are described by $h(\e^{-t}) = 0$. Rearranging leads to the fixed point equation $G_{\wit D^\msr} (G_{\wit D^\msl}(\e^{-t}) ) = \e^{-t}$ for some $t\in\R^+$, or equivalently, $G_{\wit D^\msr} \circ G_{\wit D^\msl}(z) = z$ for some $z\in(0,1)$, 
which is the generating function of $N^\msr$ defined in \eqref{eq:def_Nr}. 
We always have the trivial fixed point $1$, however whether a second fixed point exists or not 
depends on whether the derivative $G_{N^\msr}'(1) = \E[N^\msr] = \E[\wit D^\msr] \E[\wit D^\msl] > 1$, which is exactly the supercriticality condition \cref{cond:supercrit}. In the following, we study the two cases separately, and show that a giant component exists if and only if \cref{cond:supercrit} holds.

\subsubsection{The supercritical case}
First, we study the case when \cref{cond:supercrit} holds. Recall $N^\msr$ from \eqref{eq:def_Nr} and \eqref{eq:def_genfunc}. Since $G_{N^\msr}'(1) > 1$, there exists a second fixed point $\eta_\msl<1$ in the interval $[0,1]$. 
In fact, $\eta_\msl > \wit q_0$ with $\wit q_0$ from \cref{prop:living}, by the following reasoning. By the definition of $N^\msr$, $\P(N^\msr=0) \geq \P(\wit D^\msr=0) = \wit q_0 = q_1/\E[D^\msr]$, which is positive by assumption. Thus the fixed point $\eta_\msl$ cannot be $0$, and consequently by the strict monotonicity of $G_{N^\msr}$, $\eta_\msl = G_{N^\msr}(\eta_\msl) > G_{N^\msr}(0) > \P(N^\msr=0) \geq \wit q_0$. 
Define 
\beq\label{eq:deftstar} t^\star := -\log\eta_\msl, \eeq
which lies in $(0,-\log \wit q_0)$, and consequently $t^\star$ is the \emph{unique} value of $t\in\R^+$ such that $H\bigl(\e^{-t^\star}\bigr) = 0$. 
In the following, we work towards showing that 
the exploration of the giant component lasts from time $0+o_{\sss\P}(1)$ to time $t^\star \pm o_{\sss\P}(1)$. Define
\beq\label{eq:def_t0} t_0 := -\log\bigl( (\eta_\msl+\wit q_0)/2\bigr), \eeq
so that $t^\star < t_0 < -\log\wit q_0$, 
and denote the ``good event''
\beq\label{eq:def_goodevent_G} \goodevent_1 (\delta) = \nth\goodevent_1(\delta) := \bigl\{ \sup\nolimits_{t\leq t_0} \bigl\lvert N_n^{-1} \wih \actives^\msl(t) - H(\e^{-t}) \bigr\rvert < \delta \bigr\}. \eeq
Note that since $t_0<-\log \wit q_0$, both \cref{lem:sleeping,prop:living} are applicable for this choice of $t_0$. Consequently, for any fixed $\delta$, by \eqref{eq:H_Atilde_relation} the good event happens whp, i.e.,
\beq\label{eq:goodevent_G_whp} \P\bigl(\nth\goodevent_1(\delta)\bigr) \to 1 \eeq
as $n\to\infty$. By properties of the generating function $G_{N^\msr}$ and \eqref{eq:defH}, rearranging yields that $z\mapsto H(z)$ is positive for $z\in(\eta_\msl,1)$, thus $t\mapsto H(\e^{-t})$ is positive for $t\in(0,t^\star)$. In fact, we have the following analytical properties of $t\mapsto H\bigl(\e^{-t}\bigr)$:

\begin{claim}\label{claim:concavity_replacement}
For any $\eps>0$ small enough, there exists $\delta = \delta(\eps)>0$ such that $t\mapsto H\bigl(\e^{-t}\bigr)>\delta$ on $t\in(\eps,t^\star-\eps)$ and $H\bigl(\e^{-(t^\star+\eps)}\bigr)<-2\delta$.
\end{claim}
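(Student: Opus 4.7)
The plan is to combine a compactness argument on the interval $[\eps,t^\star-\eps]$ with a first-order (transversality) argument at the zero $t^\star$. Recall from \eqref{eq:defH}--\eqref{eq:def_func_h} that $H(z) = \E[D^\msl]\,z\,h(z)$ with $h(z) := \inv G_{\wit D^\msr}(z) - G_{\wit D^\msl}(z)$, and that the zeros of $h$ on $[\wit q_0,1]$ correspond exactly to fixed points of $f := G_{\wit D^\msr} \circ G_{\wit D^\msl}$. By the discussion preceding the claim, under the supercriticality condition \cref{cond:supercrit} (and after the reduction to $q_1>0$ in \cref{claim:reduction}), $\eta_\msl \in (\wit q_0, 1)$ is the unique such fixed point in $[0,1)$, and $h > 0$ on $(\eta_\msl,1)$; equivalently, $H(\e^{-t}) > 0$ on $(0, t^\star)$.

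For the first inequality, I would note that $t \mapsto H(\e^{-t})$ is continuous on the compact interval $[\eps, t^\star - \eps]$ and satisfies $\e^{-t} \in [\e^{-(t^\star-\eps)},\e^{-\eps}] \subset (\eta_\msl, 1)$ there, hence is strictly positive. Compactness then yields
\[ \delta_1 := \min_{t\in[\eps,t^\star-\eps]} H(\e^{-t}) > 0. \]
The core of the proof is the second inequality, which requires showing that $H(\e^{-t})$ is strictly negative and bounded away from $0$ just to the right of $t^\star$. For this I would show that $h$ crosses zero \emph{transversally} at $\eta_\msl$, i.e.\ $h'(\eta_\msl)>0$. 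By the inverse function theorem, $(\inv G_{\wit D^\msr})'(\eta_\msl) = 1/G'_{\wit D^\msr}(\eta_\msr)$ with $\eta_\msr = G_{\wit D^\msl}(\eta_\msl)$, so
\[ h'(\eta_\msl) \;=\; \frac{1}{G'_{\wit D^\msr}(\eta_\msr)} - G'_{\wit D^\msl}(\eta_\msl) \;=\; \frac{1 - f'(\eta_\msl)}{G'_{\wit D^\msr}(\eta_\msr)}. \]
Here $f$ is increasing and convex on $[0,1]$ with $f(1)=1$ and $f'(1) = \E[\wit D^\msl]\E[\wit D^\msr] > 1$ by \cref{cond:supercrit}. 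Since $z \mapsto f(z)-z$ is convex with zeros at $\eta_\msl$ and $1$ and positive derivative at $1$, its derivative at the smaller zero must be strictly negative, so $f'(\eta_\msl) < 1$ and hence $h'(\eta_\msl) > 0$. I expect this transversality step to be the only nontrivial one; the rest is bookkeeping.

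Consequently, for all sufficiently small $\eps > 0$ (ensuring also $\e^{-(t^\star+\eps)} > \wit q_0$), a first-order Taylor expansion around $z = \eta_\msl$ gives
\[ h(\e^{-(t^\star+\eps)}) \;\leq\; -\tfrac12 h'(\eta_\msl)\bigl(\eta_\msl - \e^{-(t^\star+\eps)}\bigr) \;<\; 0, \]
so that $\delta_2 := -H(\e^{-(t^\star+\eps)}) > 0$. Taking $\delta(\eps) := \tfrac14 \min(\delta_1,\delta_2)$ then yields $H(\e^{-t}) \geq \delta_1 > \delta$ for all $t\in(\eps, t^\star-\eps)$ and $H(\e^{-(t^\star+\eps)}) = -\delta_2 < -2\delta$, completing the proposed argument.
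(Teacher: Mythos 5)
Your argument is correct, but it takes a genuinely different route from the paper's. The paper first reduces the claim to the analogous statement for $h(z)=\inv G_{\wit D^\msr}(z)-G_{\wit D^\msl}(z)$ (extracting the bounded positive factor $\E[D^\msl]\,\e^{-t}$, which yields the translation $\delta=\delta'\E[D^\msl]\e^{-t_0}$), and then invokes in one stroke the \emph{strict concavity} of $h$ on $[\wit q_0,1]$: a strictly concave function that vanishes at $\eta_\msl$ and at $1$ and is positive exactly on $(\eta_\msl,1)$ automatically satisfies both the uniform interior lower bound and the required strictly negative value at any point to the left of $\eta_\msl$. You instead split the claim in two: compactness for the interior bound, and a transversality argument for the bound past $t^\star$. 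That transversality step — writing $h'(\eta_\msl)=(1-f'(\eta_\msl))/G'_{\wit D^\msr}(\eta_\msr)$ via the inverse function theorem and then deriving $f'(\eta_\msl)<1$ from the convexity of $f-\mathrm{id}$ with zeros at $\eta_\msl<1$ and $f'(1)>1$ — is correct, and it is precisely the classical branching-process fact that the offspring generating function has derivative below $1$ at its smaller fixed point. The trade-off: your route avoids having to justify that $h$ is strictly concave (true here, but it requires noting that supercriticality forces at least one of $G_{\wit D^\msl}$, $G_{\wit D^\msr}$ to be strictly convex), at the cost of a Taylor error estimate. A small simplification is available: since $h$ is at least weakly concave, $h'(\eta_\msl)>0$ already forces $h$ to be strictly increasing on all of $[\wit q_0,\eta_\msl]$, so $h(\e^{-(t^\star+\eps)})<0$ holds for every $\eps$ with $\e^{-(t^\star+\eps)}>\wit q_0$, and you could drop the first-order Taylor bound (and the attendant ``$\eps$ small enough'' caveat, beyond what is needed for the domain) in favour of this monotonicity.
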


\begin{proof}
Recall \eqref{eq:defH} and \eqref{eq:def_func_h} and note that $0 < \e^{-t_0} < \e^{-t} < 1$ is bounded for $t \in (0,t_0)$. It is sufficient to show that, for some $\delta'>0$,
\beq\label{eq:glob_proof2} \begin{cases}
h\bigl(\e^{-t}\bigr)>\delta' & \text{on $(\eps, t^\star-\eps)$,}\\
h\bigl(\e^{-t}\bigr)<-2\delta' & \text{at $t = t^\star+\eps$,}
\end{cases}
\eeq
then the required statement follows for $\delta := \delta' \E[D^\msl] \e^{-t_0}$. 
By the strict monotonity of the mapping $t \mapsto \e^{-t}$, \eqref{eq:glob_proof2} is equivalent to
\beq\label{eq:concavity_replacement}
\begin{cases}
h(z)>\delta' & \text{ for $z\in \bigl(\e^{-(t^\star-\eps)}, \e^{-\eps}\bigr) = (\eta_\msl+\eps_1, 1-\eps_2)$,}\\
h(z)<-2\delta' & \text{ for $z = \e^{-(t^\star+\eps)} < \eta_\msl$.}
\end{cases}
\eeq
Recall \eqref{eq:def_func_h}. Note that by $q_1>0$, $z\mapsto h(z)$ is strictly concave on its domain $[\wit q_0,1]$ and positive exactly on $(\eta_\msl,1)$, hence for any $\eps$ fixed, we can choose $\delta'>0$ appropriately such that \eqref{eq:concavity_replacement} holds. This concludes the proof of \cref{claim:concavity_replacement}.
\end{proof}

\smallskip
\paragraph*{\textbf{Finding the largest component}}
In the following, we aim to characterize those executions of \step{1} where we start exploring the giant component and the component after, i.e., when we finish exploring the giant. Recall the definition of $\steps_1$ from \cref{algo:exploration}. Denote the last element of $\steps_1$ that is less than $t^\star/2$ by $T_1$, and denote the next element after $T_1$ by $T_2$, i.e., $T_2$ is the first element of $\steps_1$ that is at least $t^\star/2$. Formally,
\beq\label{eq:gianttimes} T_1 = \max\{t\in\steps_1:\,t\leq t^\star/2\},
\qquad T_2 = \min\{t\in\steps_1:\,t>t^\star/2\}, \eeq
with the convention that the minimum over an empty set is $+\infty$. Later, we show that the exploration of the largest component lasts from $T_1$ to $T_2$. We first show the following:

\begin{lemma}[Exploration time of the ``giant'']\label{claim:giantcomptimes}
As $n\to\infty$,
\beq T_1 \toinp 0, \qquad T_2 \toinp t^\star. \eeq
\end{lemma}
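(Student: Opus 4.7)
The plan is to leverage the concentration $\wih\actives^\msl(t)/N_n \approx H(\e^{-t})$ supplied by \eqref{eq:H_Atilde_relation}, the sign information on $H(\e^{-t})$ supplied by \cref{claim:concavity_replacement}, and the pointwise bound $\actives^\msl(t) \geq \wih\actives^\msl(t)$ from \cref{lem:step1effect}. Since \step{1} can only be executed at times when $\actives^\msl(t) = 0$, a strictly positive lower bound on $\actives^\msl(t)$ on some interval precludes any element of $\steps_1$ lying in that interval.

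For $T_1 \toinp 0$ and the matching lower bound $T_2 \geq t^\star - \eps$ whp, fix an arbitrarily small $\eps \in (0, t^\star/2)$ and apply \cref{claim:concavity_replacement} to choose $\delta > 0$ with $H(\e^{-t}) > \delta$ on $(\eps, t^\star - \eps)$. On the good event $\goodevent_1(\delta/2)$, which occurs whp by \eqref{eq:goodevent_G_whp}, we have $\wih\actives^\msl(t) > \delta N_n/2$ for all $t \in (\eps, t^\star - \eps)$, hence $\actives^\msl(t) \geq \wih\actives^\msl(t) > 0$ on this interval. Consequently no \step{1} is executed on $(\eps, t^\star - \eps)$, yielding both $T_1 \leq \eps$ and $T_2 \geq t^\star - \eps$ on this event.

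For the upper bound $T_2 \leq t^\star + \eps$ whp, I argue by contradiction: suppose $T_2 > t^\star + \eps$, so that no \step{1} is executed in $(t^\star/2, t^\star + \eps]$. Consider the gap $D(t) := \actives^\msl(t) - \wih\actives^\msl(t) = \wih\sleeping^\msl(t) - \sleeping^\msl(t) \geq 0$. A case analysis of the jump rules of \cref{algo:exploration} shows that $D$ increases only at \step{1} times (by the degree of the newly awakened vertex), is unchanged during \step{2} and during \step{3}-jumps on sleeping vertices or on already-ticked vertices, and can only decrease at \step{3}-jumps that correspond to the first alarm on a vertex previously awakened by a \step{1}. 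Hence $D$ is non-increasing between consecutive \step{1} times, so $D(t^\star + \eps) \leq D(t^\star/2)$. Using $H(\e^{-s}) \geq 0$ on $[0, t^\star/2]$ together with the good event, $\inf_{s \leq t^\star/2} \wih\actives^\msl(s) \geq -\delta N_n/2$, and then \cref{lem:step1effect} delivers $D(t^\star/2) < \delta N_n/2 + \dmax{\msl}$. On the other hand, \cref{claim:concavity_replacement} and the good event give $\wih\actives^\msl(t^\star + \eps) < -3\delta N_n/2$, and combined with $\actives^\msl(t^\star + \eps) \geq 0$ this forces $D(t^\star + \eps) \geq 3\delta N_n/2$. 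The two bounds together imply $\delta N_n < \dmax{\msl}$, which contradicts $\dmax{\msl} = o(N_n)$ from \cref{rem:asmp_consequences} \cref{cond:dmax} for all $n$ large enough.

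The principal obstacle is the upper bound for $T_2$. The lower bound follows transparently from $\actives^\msl \geq \wih\actives^\msl$ and the strict positivity of $H(\e^{-\cdot})$ on $(0, t^\star)$, but the matching upper bound requires a quantitative control of the discrepancy $D(t)$ between the true and the idealized active-half-edge processes. The non-increase of $D$ between \step{1} events has to be extracted by inspecting how $\sleeping^\msl$ and $\wih\sleeping^\msl$ update in each of the steps of \cref{algo:exploration}, and then the bound in \cref{lem:step1effect} must be combined with a gap of size $3\delta/2$ opened up by \cref{claim:concavity_replacement} on either side of $t^\star/2$ in order to produce the contradiction with $\dmax{\msl} = o(N_n)$. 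Additional care is needed to handle the possibility of several executions of \step{1} at the same time stamp and to respect the right-continuous convention underlying all the processes in play.
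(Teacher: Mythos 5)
Your proof is correct and takes essentially the same route as the paper: the lower bounds $T_1 \leq \eps$, $T_2 \geq t^\star - \eps$ follow from positivity of $\actives^\msl$ on $(\eps, t^\star-\eps)$ (via $\actives^\msl \geq \wih\actives^\msl$, \cref{claim:concavity_replacement}, and \cref{eq:goodevent_G_whp}), while the upper bound $T_2 \leq t^\star + \eps$ comes from showing that the gap $D(t) = \actives^\msl(t) - \wih\actives^\msl(t)$ strictly increases across the interval, which is only possible at \step{1} times. The paper compares $D(t^\star)$ to $D(t^\star+\eps)$ directly and concludes a \step{1} must lie in $(t^\star, t^\star+\eps]$; you phrase it as a contradiction with $D$ non-increasing on $(t^\star/2, t^\star+\eps]$, which is logically equivalent. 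Two small remarks: your explicit case analysis of how $D$ evolves under \step{1}/\step{2}/\step{3} makes the monotonicity claim self-contained, whereas the paper cites it from the proof of \cref{lem:step1effect}; and your bookkeeping with $\goodevent_1(\delta/2)$ is actually tighter than the paper's (the paper invokes $\goodevent_1(\delta)$ yet asserts $N_n^{-1}\wih\actives^\msl(t^\star+\eps)\leq -2\delta$, which only gives $<-\delta$; your choice $\delta/2$ cleanly yields $<-3\delta/2$ and makes the contradiction manifest).
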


\begin{proof}
Note that $\actives^\msl(t)-\wih\actives^\msl(t) = \wih\sleeping^\msl(t)-\sleeping^\msl(t) > 0$ by definition. By \eqref{eq:def_goodevent_G} and \cref{claim:concavity_replacement}, on the event $\goodevent_1(\delta)$ for $t\in(\eps,t^\star-\eps)$,
\beq\label{eq:Apositive} \actives^\msl(t) \geq \wih\actives^\msl(t) > \delta N_n > 0. \eeq
Recall that executing \step{1} requires $\actives^\msl(t) = 0$. Consequently, on the event $\goodevent_1(\delta)$, \step{1} could not have been executed within the time interval $(\eps,t^\star-\eps)$, hence on this event,
\beq\label{eq:nostep1} T_1\leq\eps, \qquad T_2\geq t^\star-\eps. \eeq
Noting that $0\in\steps_1$, thus $0\leq T_1$, it follows that $T_1 \toinp 0$ by \eqref{eq:goodevent_G_whp} and \eqref{eq:nostep1}. We have yet to give an upper bound on $T_2$ to prove that $T_2 \toinp t^\star$. We do so by proving that \step{1} must have been executed between $t^\star-\eps$ and $t^\star+\eps$. In fact, we show that the error $\actives^\msl - \wih \actives^\msl$ has increased on the smaller interval between $t^\star$ and $t^\star+\eps$, which can only happen due to \step{1}, as discussed in the proof of \cref{lem:step1effect}. 
Recall that $H\bigl(\e^{-t}\bigr)$ is positive on $(0,t^\star)$, hence on the event $\goodevent_1(\delta)$, $N_n^{-1}\wih\actives^\msl(t)>-\delta$ on $(0,t^\star)$. \Cref{lem:step1effect} is applicable for our choice of $t_0$ (see \eqref{eq:def_t0}) and $t^\star<t_0$, thus for any fixed $\delta$,
\beq\label{eq:A_Atilde_diff_small} \actives^\msl(t^\star) - \wih\actives^\msl(t^\star)
\leq -\inf_{t\leq t^\star} \wih\actives^\msl(t) + \dmax{\msl} < \delta N_n + \delta N_n / 2 = (3/2)\delta N_n, \eeq
as $\dmax{\msl}<\delta N_n/2$ for $n$ large enough by \cref{rem:asmp_consequences} \cref{cond:dmax}. However, on the event $\goodevent_1(\delta)$, 
\beq N_n^{-1} \wih\actives^\msl(t^\star+\eps) \leq -2\delta \eeq
by \cref{claim:concavity_replacement}, while $\actives^\msl(t)\geq0$ for any $t$. Thus
\beq\label{eq:A_Atilde_diff_big} \actives^\msl(t^\star+\eps) - \wih\actives^\msl(t^\star+\eps) \geq 2\delta N_n. \eeq
Comparing \eqref{eq:A_Atilde_diff_small} and \eqref{eq:A_Atilde_diff_big}, we see that $\actives^\msl(t)-\wih\actives^\msl(t)$ increased between times $t^\star$ and $t^\star+\eps$, which is only possible when \step{1} is executed. Consequently $T_2\leq t^\star+\eps$ on the event $\goodevent_1(\delta)$ that happens whp. Combining this with \eqref{eq:nostep1}, we obtain that $T_2 \toinp t^\star$, concluding the proof of \cref{claim:giantcomptimes}.
\end{proof}

\paragraph*{\textbf{Properties of the giant candidate}}
Recall that the exploration of each component starts with an execution of \step{1}, thus by \eqref{eq:gianttimes}, only one component is explored in the time interval $(T_1,T_2)$. Let us denote this component by $\comp_\star = \nth\comp_\star$. We study some properties of $\comp_\star$ that will help us in showing that $\comp_\star$ is whp the largest component. Recall from \cref{ss:exploration} that $\msl$-vertices can be sleeping or awake and $\msl$-half-edges can be sleeping, active or paired. Also recall that $\sleepingdeg_k^\msl(t)$ denotes the number of vertices of degree $k$ still sleeping at time $t$. Since $T_1, T_2\in\steps_1$, we have $\actives^\msl(T_1)=\actives^\msl(T_2)=0$, thus all $\msl$-half-edges that are removed from the sleeping set between $T_1$ and $T_2$ must be paired by time $T_2$. Thus all $\msl$-vertices and $\msl$-half-edges that are removed from the sleeping set between $T_1$ and $T_2$ are part of the component $\comp_\star$. Hence, with $\leftpar_k$ defined in \eqref{eq:def_Vk},
\begin{gather}\label{eq:giantcompthm_1_vertices} \bigl\lvert \vertices_k^\msl \cap \comp_\star \bigr\rvert 
= \sleepingdeg_k^\msl(T_1) - \sleepingdeg_k^\msl(T_2), \\
\label{eq:giantcompthm_1_edges} \bigl\lvert \edges(\comp_\star) \bigr\rvert = 
\sleeping^\msl(T_1) - \sleeping^\msl(T_2).
\end{gather}
Recall that $t^\star$ is defined in \eqref{eq:deftstar} so that $H(\e^{-t^\star})=0$, and further, $H(\e^{-t})>0$ for $t\in(0,t^\star)$. By \cref{claim:giantcomptimes} and the continuity of $H$, $\inf_{t\leq T_2}H\bigl(\e^{-t}\bigr) \toinp \inf_{t\leq t^\star}H\bigl(\e^{-t}\bigr) = 0$. Thus whp \eqref{eq:H_Atilde_relation} applies to $T_2$ and yields $N_n^{-1}\inf_{t\leq T_2} \wih\actives^\msl(t) \toinp 0$ as well. 
Note that $\wih\sleepingdeg_k^\msl(t)\geq\sleepingdeg_k^\msl(t)$ for all $t$ and $k$, with $\wih\sleepingdeg_k^\msl(t)$ defined in \cref{ss:exploration}. 
Recall \eqref{eq:def_sleepingtotal} and \eqref{eq:sleepingapprox}. 
By \cref{lem:step1effect},
\beq\label{eq:errorT2} \frac{1}{N_n} \sup_{t\leq T_2}\, \bigl(\wih\sleepingdeg_k^\msl(t)-\sleepingdeg_k^\msl(t)\bigr)
\leq \frac{1}{N_n} \sup_{t\leq T_2} \bigl(\wih\sleeping^\msl(t) - \sleeping^\msl(t)\bigr)
\leq \frac{1}{N_n} \inf_{t\leq T_2} \wih\actives^\msl(t) 
+ \frac{\dmax{\msl}}{N_n} \toinp 0. \eeq
Combining \eqref{eq:giantcompthm_1_vertices} and \eqref{eq:errorT2} with \eqref{eq:sleeping_deg_k} from \cref{lem:sleeping},
\beq\label{eq:giantcompthm_3} \bal &N_n^{-1}\bigl\lvert \vertices_k^\msl\cap\comp_\star \bigr\rvert
- \bigl( p_k \e^{-k T_1} - p_k \e^{-k T_2} \bigr) \\
&= N_n^{-1}\bigl(\sleepingdeg_k^\msl(T_1) - \sleepingdeg_k^\msl(T_2)\bigr)
- N_n^{-1}\bigl(\wih\sleepingdeg_k^\msl(T_1) - \wih\sleepingdeg_k^\msl(T_2)\bigr) \\
&\phantom{{}={}}+ \bigl( N_n^{-1}\wih\sleepingdeg_k^\msl(T_1) - p_k \e^{-k T_1} \bigr)
- \bigl( N_n^{-1}\wih\sleepingdeg_k^\msl(T_2) - p_k \e^{-k T_2} \bigr) \toinp 0. \eal \eeq
Since the function $t \mapsto p_k \e^{-k t}$ is continuous, by \cref{claim:giantcomptimes} and \eqref{eq:deftstar},
\beq\label{eq:giantcompthm_4} p_k \bigl( \e^{-k T_1} - \e^{-k T_2} \bigr)
\toinp p_k \bigl( \e^{-k0} - \e^{-k t^\star} \bigr)
= p_k \bigl( 1 - \eta_\msl^k \bigr). \eeq
Then combining \eqref{eq:giantcompthm_3} and \eqref{eq:giantcompthm_4} yields
\beq\label{eq:Cstarprop_degk} N_n^{-1}\bigl\lvert \vertices_k^\msl\cap \comp_\star \bigr\rvert \toinp p_k \bigl( 1 - \eta_\msl^k \bigr). \eeq
Similarly, by summation and \eqref{eq:sleeping_vert}, as well as \eqref{eq:giantcompthm_1_edges} and \eqref{eq:sleeping_halfedge}, respectively,
\begin{gather}
\label{eq:Cstarprop_vert} N_n^{-1}\bigl\lvert \leftpar\cap \comp_\star \bigr\rvert
\toinp G_{D^\msl}\bigl(\e^{-0}\bigr) - G_{D^\msl}\bigl(\e^{-t^\star}\bigr)
= 1 - G_{D^\msl}(\eta_\msl) = \xi_\msl, \\
\label{eq:Cstarprop_edge} N_n^{-1}\bigl\lvert \edges(\comp_\star) \bigr\rvert
\toinp \E[D^\msl]\bigl(1 - \eta_\msl G_{\wit D^\msl} (\eta_\msl) \bigr)
= \E[D^\msl]\bigl(1 - \eta_\msl \eta_\msr \bigr).
\end{gather}
In particular, $\comp_\star$ contains a linear proportion of edges and $\msl$-vertices.

\smallskip
\paragraph*{\textbf{Uniqueness}} Next, we prove that whp there is no other component containing a linear proportion of edges and vertices, hence $\comp_\star$ must be $\comp_{1,\msb}$ and further, the giant component is unique. Since $T_1 \toinp 0$, by \eqref{eq:sleeping_halfedge}, the total number of $\msl$-half-edges explored before $\comp_\star$ is $o_{\sss\P}(N_n)$. Consequently, whp no linear-sized component is explored before $\comp_\star$. Let us define $T_3$ as the element in $\steps_1$ right after $T_2$ (and $\infty$ if there is no such element)\footnote{It may occur that $T_3=T_2$, due to the multiplicities in the sequence $\steps_1$.}. The time of $T_3$ is given by
\beq T_3 = \min\bigl\{ t\in\steps_1\setminus\{T_2\}: t> t^\star/2 \bigr\}. \eeq
Recall \eqref{eq:A_Atilde_diff_small} and \eqref{eq:A_Atilde_diff_big} that we have used to prove that \step{1} must have been executed between $t^\star$ and $t^\star+\eps$, since the difference $\actives^\msl - \wih \actives^\msl$ can only increase due to \step{1}. In fact, we have shown that on the event $\goodevent_1(\delta)$, the difference increased by at least $\delta N_n /2$, that is, \emph{linearly} with $N_n$; however, each execution of \step{1} can only increase the difference by $\dmax{\msl}$, which is $o(N_n)$ by \cref{rem:asmp_consequences} \cref{cond:dmax}. Thus, \step{1} must have been executed not once, but many times between $t^\star$ and $t^\star+\eps$; in particular, $T_3 \leq t^\star + \eps$ on the event $\goodevent_1(\delta)$, which happens whp. 
Combining this with $T_3 \geq T_2$ and $T_2 \toinp t^\star$ yields that $T_3 \toinp t^\star$. Hence the component $\comp'$ explored between $T_2$ and $T_3$ has $o_{\sss \P}(N_n)$ edges by \eqref{eq:sleeping_halfedge}. 

Now assume that for some $\alpha>0$, there exists a component $\wih\comp$ with $\alpha N_n$ many edges, that was not explored before $\comp_\star$. Then, since we pick a new vertex by choosing a \emph{uniform} sleeping $\msl$-half-edge in \step{1}, we find $\wih\comp$ at $T_2$ with \emph{positive} probability, i.e., $\P(\comp' = \wih\comp) > 0$, which implies that $\liminf_{n\to\infty} \P\bigl( \lvert \edges(\comp') \rvert/N_n \geq \alpha \bigr)>0$. This contradicts that $\lvert\edges(\comp')\rvert / N_n \toinp 0$, and thus $\wih\comp$ cannot exist. We conclude that whp no component containing a linear proportion of edges was explored before or after $\comp_\star$. Note that if a connected component has linearly many vertices, it must also have linearly many edges. Hence whp $\comp_{1,\msb} = \comp_\star$ is the largest component, and is unique in the sense that there is no other linear-sized component. Then the properties proven for $\comp_\star$ in \eqref{eq:Cstarprop_degk}-\eqref{eq:Cstarprop_edge} verify the claimed properties of the giant in \eqref{eq:leftsize}-\eqref{eq:edges}. This concludes the proof of the \emph{supercritical case} of \cref{thm:BCM_giantcomp}.

\subsubsection{The non-supercritical case} We now study the case when \cref{cond:supercrit} does not hold. With $N^\msr$ from \eqref{eq:def_Nr}, 
we now have $\E[N^\msr] = \E[\wit D^\msl]\E[\wit D^\msr] \leq 1$, thus $G_{N^\msr} = G_{\wit D^\msr}\circ G_{\wit D^\msl}$ only has the trivial fixed point $1$. It is straightforward to check (rearranging \eqref{eq:defH}) that $t\mapsto H\bigl(\e^{-t}\bigr)$ is then negative on $\R^+$, its last and only zero is $t^\star_\text{subcrit}=0$.

Let us denote the first two elements of $\steps_1$ by $T_1'=t^\star_\text{subcrit}=0$ and $T_2' = \min\bigl\{t\in\steps_1\setminus\{T_1'\}\bigr\}$. By \cref{lem:step1effect} and its proof, we have that $\actives^\msl(0)-\wih\actives^\msl(0)=o_{\sss\P}(N_n)$ and since the error can only increase due to \step1, $\actives^\msl(T_2')-\wih\actives^\msl(T_2')\leq \actives^\msl(0)-\wih\actives^\msl(0) + \dmax{\msl} = o_{\sss\P}(N_n)$. On the other hand, for any $\eps>0$, $N_n^{-1} \wih\actives^\msl(\eps) \toinp H\bigl(\e^{-\eps}\bigr) < 0$ by \eqref{eq:H_Atilde_relation}. Noting that $\actives^\msl(\eps)\geq0$, $N_n^{-1}(\actives^\msl(\eps)-\wih\actives^\msl(\eps)) > 0$ whp, hence $T_2'<\eps$ whp, i.e., $T_2' \toinp 0$. Denote by $\comp^0$ the component explored between $T_1'$ and $T_2'$, then $\lvert\edges(\comp^0)\rvert = o_{\sss \P}(N_n)$ by \eqref{eq:sleeping_halfedge}. With an analogous argument to the proof of uniqueness in the supercritical case, no linear-sized component can exist, since we would find it at $T_1'$ with positive probability. Hence $\lvert\nth\comp_{1,\msb}\rvert = o_{\sss \P}(N_n)$. This concludes the proof of \cref{thm:BCM_giantcomp}. \proofends

We remark that the non-supercritical case could alternatively be proved by showing that $\limsup_{n\to\infty} \abs{\comp_1} / N_n \leq \xi_\msl$ through combining the local weak convergence result \cite[Theorem 2.14]{vdHKomVad18locArxiv} and arguments similar to those in \cref{s:local_global}. 

\section{Percolation phase transition and the giant of the {\rm RIGC}}
\label{s:perc_proof}
In this section, we prove \cref{thm:perc_bond} as a consequence of \cref{thm:giantcomp}. 

\subsection{Percolation on the {\rm RIGC} represented as an {\rm RIGC} with random parameters}
\label{ss:precolation_representation}
First, we focus on a qualitative understanding of the bond percolation model. 

Recall the construction of the $\RIGC$ from \cref{ss:RIGC_def}. Recall that $\edges(\comvect)$ denotes the disjoint union of edges in all community graphs. Further, denote the probability measure of the bipartite matching $\bmatch_n$ by $\P_{\bmatch_n}$. 
For a given $\bmatch_n$, denote the edge set of the corresponding realization of the $\RIGC$ by $\edges(\bmatch_n)$. Note that by construction and by our choice of treating the $\RIGC$ as a multigraph, for any given $\bmatch_n$, there is a one-to-one correspondence between $\edges(\comvect)$ and $\edges(\bmatch_n)$. 
For $e\in\edges(\comvect)$, we denote the corresponding edge $e'=e(\bmatch_n)\in\edges(\bmatch_n)$.

Recall that percolation is defined conditionally on the realization of the random graph $\RIGC$, as follows. Given $\bmatch_n$, each edge $e'\in\edges(\bmatch_n)$ is assigned an independent Bernoulli random variable $X_{e'}$ with success probability $\pi$, and we denote this conditional measure by $\P_\pi(\cdot \cond \bmatch_n)$. Together with the measure $\P_{\bmatch_n}$ of $\bmatch_n$, this determines the joint measure $\P_\pi$ of the percolated graph $\RIGC(\pi)$. In the following, we establish an alternative representation as a \emph{product measure}. Intuitively, we make use of the correspondence between $\edges(\comvect)$ and $\edges(\bmatch_n)$ to define percolation on the communities, rather than on the $\RIGC$, which can be done \emph{independently} of the bipartite matching.

\begin{figure}[hbt]
\centering
\begin{subfigure}[b]{0.28\textwidth}
	\centering
	\includegraphics[width=0.95\textwidth]{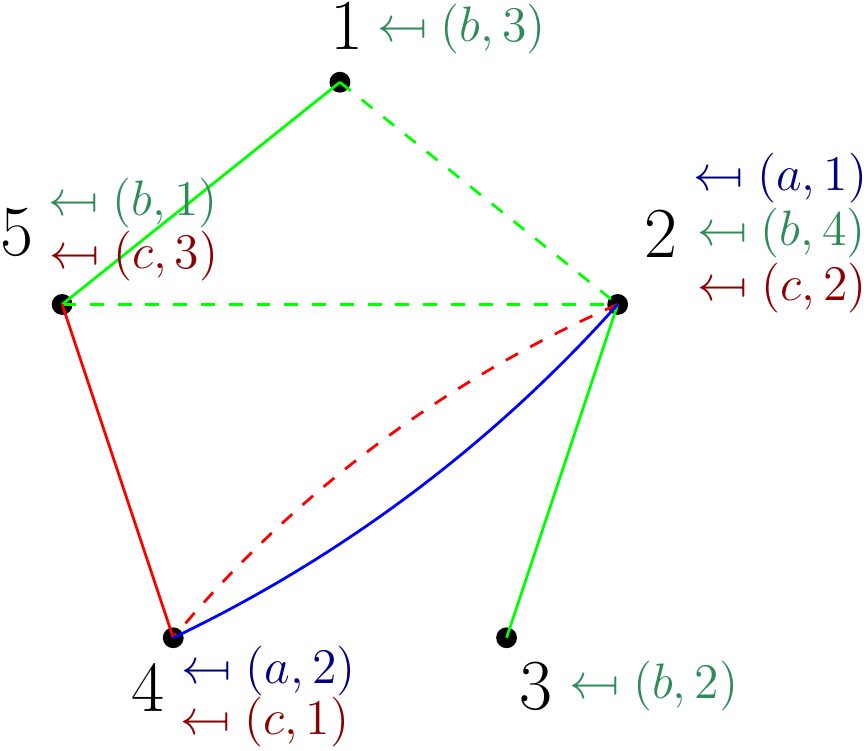}
	\caption{\emph{The percolated $\RIGC$}\\ The removed edges are represented as dashed.}
\end{subfigure}\hfill
\begin{subfigure}[b]{0.7\textwidth}
	\centering
	\includegraphics[width=0.95\textwidth]{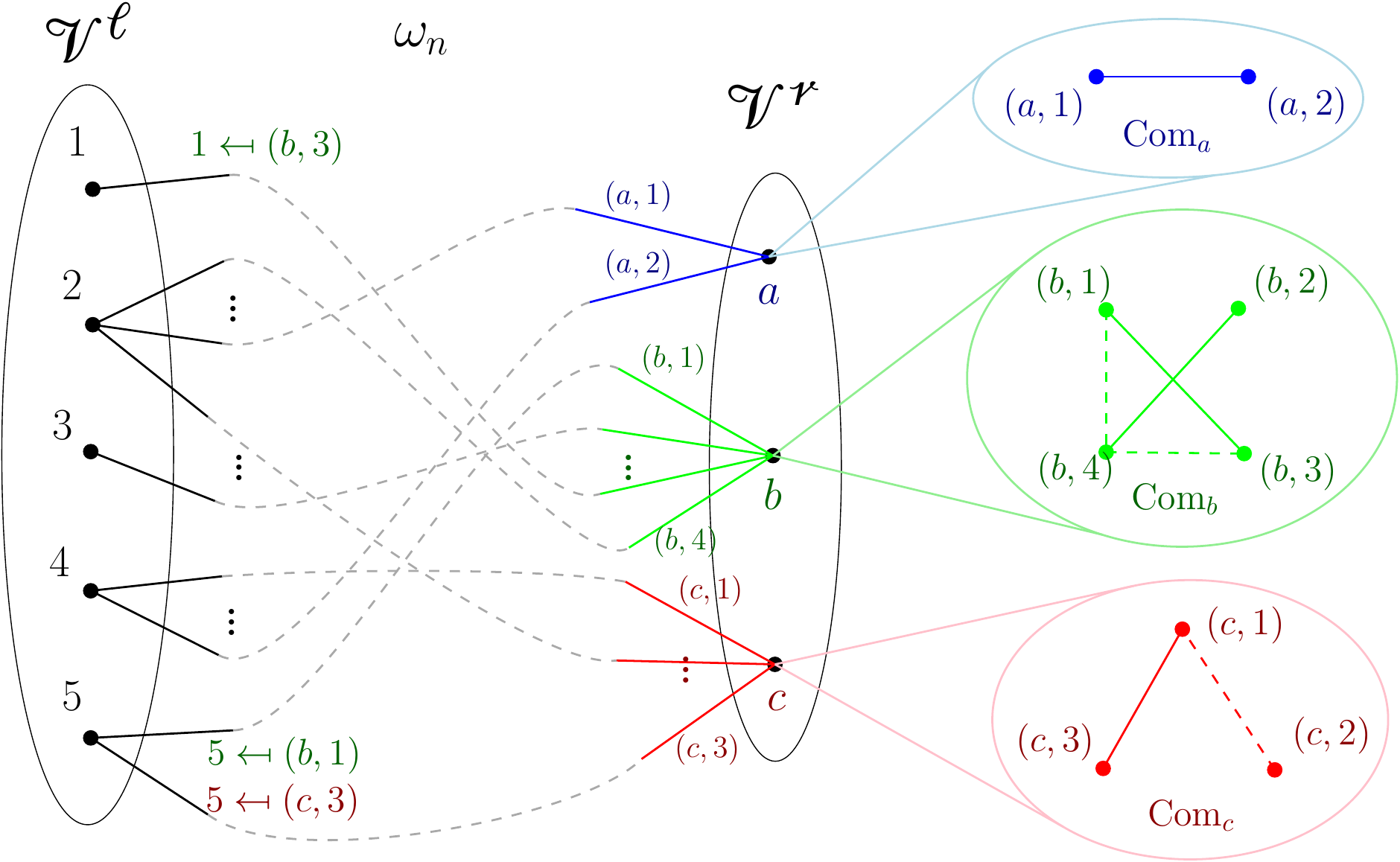}
	\caption{\emph{Percolation on the communities}\\ Using the group memberships, we can ``trace back'' each removed edge to a community edge.}
\end{subfigure}\\
\begin{subfigure}[b]{0.7\textwidth}
	\centering
	\includegraphics[width=0.95\textwidth]{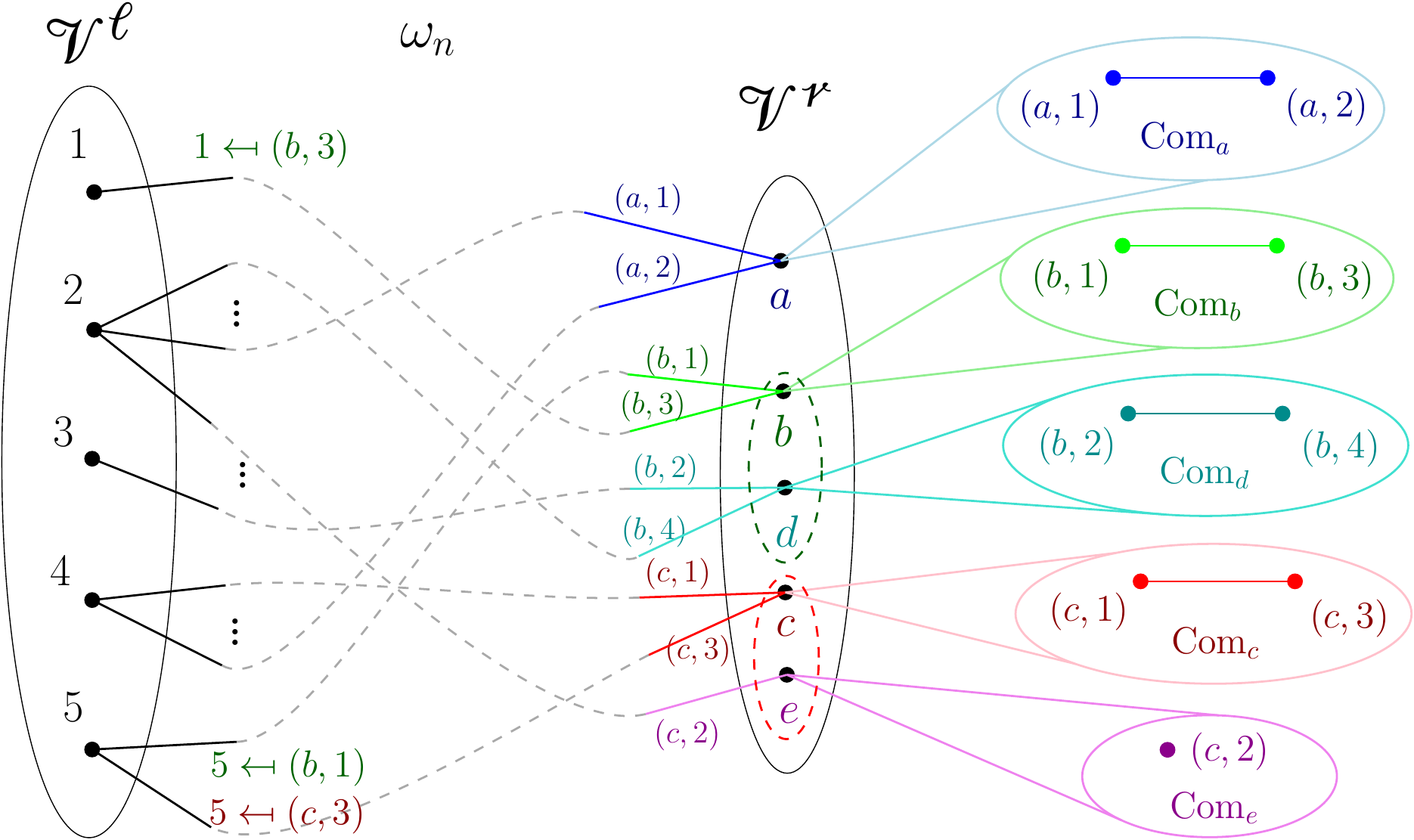}
	\caption{\emph{The new, percolated community list}\\ If communities become disconnected, we separate each connected component as its own community, e.g.\ $b$ is separated into $b$ and $d$.}
\end{subfigure}
\caption{Reducing percolation on the $\RIGC$ to percolation on the communities}
\end{figure}
We define percolation on the communities and the percolated community list $\comvect(\pi)$, as follows. With each $e\in\edges(\comvect)$, we associate an independent $\Bernoulli(\pi)$ random variable $X_e$; $e$ is retained exactly when $X_e=1$. Denote by $\com_a(\pi)$ the random graph produced by percolation on $\com_a$. Note that $\com_a(\pi)$ is not necessarily connected, which conflicts with our initial assumptions. Thus, we need to replace $\com_a(\pi)$ by the random list of its connected components $\bigl(\com_{a,i}(\pi)\bigr)_{i\in[c(\com_a(\pi))]}$, where $c(\com_a(\pi))$ denotes the number of connected components of $\com_a(\pi)$. Then $\bigl(\com_{a,i}(\pi)\bigr)_{a\in[M_n], i\in[c(\com_a(\pi))]}$ is the new list of communities. 
We introduce the new number of communities $M_n(\pi):=\sum_{a\in\rightpar} c(\com_a(\pi))$, so that the new rhs partition is $[M_n(\pi)]$. By re-indexing, we can now write and define $\comvect(\pi) := \bigl(\com_{a'}^\pi\bigr)_{a'\in[M_n(\pi)]}$. With these new parameters, the above intuition can be formalized as follows:

\begin{proposition}[Percolation on the $\RIGC$ is still an $\RIGC$]
\label{prop:rigc_perc}
Bond percolation with edge retention probability $\pi$ on an $\RIGC$ with parameters $\bitd^\msl$ and $\comvect$ is equivalent to an $\RIGC$ with parameters $\bitd^\msl$ and $\comvect(\pi)$. Formally,
\beq \RIGC(\bitd^\msl,\comvect)(\pi) \eqindis \RIGC(\bitd^\msl,\comvect(\pi)). \eeq
\end{proposition}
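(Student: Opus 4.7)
The plan is to prove the proposition by an explicit coupling that exploits the one-to-one correspondence between $\edges(\comvect)$ and $\edges(\bmatch_n)$ mentioned just before the proposition. First, I would sample the bipartite matching $\bmatch_n \sim \Unif[\bmatchset_n]$ and, independently of it, iid $\Bernoulli(\pi)$ random variables $(X_e)_{e\in\edges(\comvect)}$, one for each community edge. Under this joint law, the percolated graph $\RIGC(\bitd^\msl,\comvect)(\pi)$ is by definition the multigraph whose edge multiset is $\{e(\bmatch_n) : e\in\edges(\comvect),\, X_e = 1\}$, since assigning $\Bernoulli(\pi)$'s to edges of the RIGC is the same, via the bijection $e\mapsto e(\bmatch_n)$, as assigning them to community edges.

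Next, I would use the same $(X_e)$ to construct $\comvect(\pi)$ as in the excerpt, by percolating each $\com_a$ and then splitting the result into its connected components $\com_{a,i}(\pi)$. The crucial observation is that this operation does not change the underlying set of community vertices: the disjoint union $\vertices(\comvect(\pi))$ is, as an indexed collection of ``membership slots'', in canonical bijection with $\vertices(\comvect)$, since each community vertex ends up in exactly one of the pieces $\com_{a,i}(\pi)$. Consequently, the set of $\msr$-half-edges of $\comvect(\pi)$ is in canonical bijection with that of $\comvect$, and the corresponding sets of bijections $\bmatchset_n(\pi)$ and $\bmatchset_n$ are identified. I would then define $\bmatch_n(\pi) := \bmatch_n$ under this identification; since $\bmatch_n$ is independent of $(X_e)$, it is also independent of $\comvect(\pi)$, so its conditional law given $\comvect(\pi)$ is $\Unif[\bmatchset_n(\pi)]$, which matches the definition of the $\RIGC$ construction with parameters $(\bitd^\msl,\comvect(\pi))$.

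Finally, by \eqref{eq:def_edge_multiplicities}, the edge multiset of $\RIGC(\bitd^\msl,\comvect(\pi))$ built from the matching $\bmatch_n(\pi) = \bmatch_n$ is exactly $\{e(\bmatch_n) : e \in \edges(\comvect(\pi))\}$. But since percolation only removes edges (and splitting into components merely regroups vertices without deleting or adding edges), we have the identification $\edges(\comvect(\pi)) = \{e \in \edges(\comvect) : X_e = 1\}$. Comparing multisets yields equality of edge multiplicities with the percolated graph from the first paragraph, so the two random multigraphs are equal almost surely under the coupling, which gives the distributional equality.

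The main subtlety, and where I would spend most of the written proof, lies not in any calculation but in bookkeeping: carefully verifying that the splitting operation $\com_a(\pi) \rightsquigarrow (\com_{a,i}(\pi))_i$ preserves both the community vertex set (so the relabeled $\msr$-half-edges are genuinely the same tokens) and the edge multiset, and that the uniform measure on $\bmatchset_n(\pi)$ is indeed induced by $\bmatch_n$ under the canonical relabeling. The multigraph aspect causes no trouble because \eqref{eq:def_edge_multiplicities} computes multiplicities by summing indicators over $\edges(\comvect)$, so deleting a single community edge via $X_e = 0$ reduces the corresponding $\RIGC$ edge multiplicity by exactly one, in lockstep on both sides of the coupling.
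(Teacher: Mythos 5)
Your proof is correct and rests on the same key observation as the paper: the bipartite matching $\bmatch_n$ and the Bernoulli variables $(X_e)_{e\in\edges(\comvect)}$ can be taken independent, so percolation ``commutes'' with the community-projection. The paper phrases this as a factorization of the joint law into a product measure, while you phrase it as an explicit almost-sure coupling; these are two faces of the same argument. If anything, your write-up is slightly more careful on the final step the paper leaves implicit, namely the canonical bijection $\vertices(\comvect) \leftrightarrow \vertices(\comvect(\pi))$ (and hence of $\msr$-half-edges and of matching spaces), which is what justifies reading the same $\bmatch_n$ as a uniform element of $\bmatchset_n(\pi)$ conditionally on $\comvect(\pi)$.
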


We refer to $\RIGC(\bitd^\msl,\comvect(\pi))$ as the \emph{$\RIGC$ representation of $\RIGC(\pi)$}. 

\begin{proof} 
Recall that given $\bmatch_n$, percolation on the $\RIGC$ is described by the iid $\Bernoulli(\pi)$ random variables $(X_{e'})_{e'\in\edges(\bmatch_n)}$. Also recall that each $e'\in\edges(\bmatch_n)$ can be written as $e'=e(\bmatch_n)$ for a unique $e\in\edges(\comvect)$ and define $X_e(\bmatch_n) := X_{e'}$ for each $e\in\edges(\comvect)$. 

A given realization of $\RIGC(\pi)$ can be characterized by its (unpercolated) edge set $\edges(\bmatch_n) = \edges$ and the outcomes of the Bernoulli variables, $x_{e'}\in\{0,1\}$ for $e'\in\edges$. Define $x_e := x_{e'}$, with $e'=e(\bmatch_n)$. We have that, for any given edge set $\edges$ and $(x_{e'})_{e'\in\edges}$,
\beq\label{eq:percolation_factorizes} \bal &\P_{\pi}\bigl( \edges(\bmatch_n) = \edges, X_{e'} = x_{e'} \;\forall {e'}\in\edges \bigr) \\
&= \P_{\bmatch_n}\bigl( \edges(\bmatch_n) = \edges \bigr) 
\P_\pi\bigl( X_{e'} = x_{e'} \;\forall {e'}\in\edges(\bmatch_n) \bcond \bmatch_n \bigr) \\
&= \P_{\bmatch_n}\bigl( \edges(\bmatch_n) = \edges \bigr) 
\P_\pi( X_e(\bmatch_n) = x_e \;\forall e\in\edges(\comvect) \bcond \bmatch_n \bigr) \\
&= \P_{\bmatch_n}\bigl( \edges(\bmatch_n) = \edges \bigr) 
\P\bigl( X_e = x_e \;\forall e\in\edges(\comvect) \bigr), \eal \eeq
where in the last step we have used that for any $\bmatch_n$, $(X_e(\bmatch_n))_{e\in\edges(\comvect)}$ are independent $\Bernoulli(\pi)$ random variables, thus the collection has the same law as $(X_e)_{e\in\edges(\comvect)}$. We conclude that the law of the percolated graph can indeed be written as a product measure.

Noting that $\bmatch_n$ did not change throughout \eqref{eq:percolation_factorizes}, we conclude that the new measure is still an $\RIGC$. Similarly, as $\bitd^\msl$ did not even appear in the formulas, it necessarily remains unchanged. As intuition has predicted, percolation can be executed on the communities before constructing the random graph, the formulas indeed contain the random variables $X_e$ corresponding to $e\in\edges(\comvect)$, meaning that the new $\RIGC$ must use $\comvect(\pi)$. This concludes the proof of \cref{prop:rigc_perc}.
\end{proof}

Next, we show that $\RIGC(\bitd^\msl,\comvect(\pi))$ still satisfies our assumptions, in the sense of \cref{rem:randomparam}. 
Denote the (random) empirical distribution of $\comvect(\pi)$ by $\nn{\bm\mu}(\pi)$. Then, we have the following convergence result:

\begin{lemma}[Convergence of percolated community list]
\label{lem:comvect_perc_conv}
Assume that the original $\comvect$ sequence satisfies \cref{asmp:convergence} \cref{cond:limit_com}. Then for the sequence of $\comvect(\pi)$, there exists a mass function $\bm\mu(\pi)$ on $\comgraphs$, such that for each $H\in\comgraphs$, as $n\to\infty$,
\beq\label{eq:comvect_perc_convinprob} \nn\mu_{H}(\pi) \toinp \mu_H(\pi). \eeq
Denote the empirical and limiting community-size distributions corresponding to $\nn{\bm\mu}(\pi)$ and $\bm\mu(\pi)$ respectively by $D_n^\msr(\pi)$ and $D^\msr(\pi)$. If the original $\comvect$ sequence also satisfies \cref{asmp:convergence} \cref{cond:mean_rdeg}, then
\beq\label{eq:comvect_perc_convinmean} \E\bigl[ D_n^\msr(\pi) \bcond \comvect(\pi) \bigr] 
\toinp \E[D^\msr(\pi)] < \infty. \eeq
\end{lemma}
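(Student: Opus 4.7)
My plan is to write
\[
\nn\mu_H(\pi) = \frac{|\vertices_H^\msr(\pi)|/M_n}{M_n(\pi)/M_n}
\]
and identify the probability limits of numerator and denominator by applying the weak law of large numbers community by community, then divide. To this end, for $H,H'\in\comgraphs$ introduce
\[
p_\pi(H',H) := \E\bigl[\#\{i:\, H'(\pi)_i \simeq H\}\bigr], \qquad c_\pi(H') := \E[c(H'(\pi))] = \sum_{H\in\comgraphs} p_\pi(H',H),
\]
with the deterministic bounds $p_\pi(H',H) \leq |H'|/|H|$ and $1\leq c_\pi(H') \leq |H'|$, and the identity $\sum_{H\in\comgraphs} |H|\, p_\pi(H',H) = |H'|$ (percolation removes edges, not vertices). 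I then define the candidate limit
\[
\mu_H(\pi) := \frac{\sum_{H'\in\comgraphs} \mu_{H'}\, p_\pi(H',H)}{\sum_{H'\in\comgraphs} \mu_{H'}\, c_\pi(H')},
\]
which is a pmf on $\comgraphs$ as soon as the denominator is finite.

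The first step exploits that the Bernoulli retention variables are independent across different communities, so conditionally on $\comvect$ the percolated graphs $(\com_a(\pi))_{a\in\rightpar}$ are independent. Fixing $H$ and $H'$, the variables $X_{H,a} := \#\{i:\, \com_{a,i}(\pi)\simeq H\}$ for $a\in\vertices_{H'}^\msr$ are iid with mean $p_\pi(H',H)$ and bounded by $|H'|/|H|$; combining the weak law of large numbers with $|\vertices_{H'}^\msr|/M_n \to \mu_{H'}$ (\cref{asmp:convergence} \cref{cond:limit_com}) then yields
\[
M_n^{-1}\sum_{a\in\vertices_{H'}^\msr} X_{H,a} \toinp \mu_{H'}\, p_\pi(H',H), \qquad M_n^{-1}\sum_{a\in\vertices_{H'}^\msr} c(\com_a(\pi)) \toinp \mu_{H'}\, c_\pi(H').
\]
Summing over the finite collection $\{H':\, |H'|\leq K\}$ gives convergence of the truncated sums. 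The remaining tail is dominated, deterministically in $n$, by $M_n^{-1}\sum_{a:|\com_a|>K}|\com_a| = \E\bigl[D_n^\msr\,\ind_{\{D_n^\msr>K\}}\bigr]$, which vanishes uniformly in $n$ as $K\to\infty$ provided $(D_n^\msr)_n$ is uniformly integrable. This uniform integrability input is the main technical obstacle in part (i); it is supplied by \cref{asmp:convergence} \cref{cond:mean_rdeg} together with \cref{rem:asmp_consequences} (which the paper implicitly has at hand throughout). Letting $K\to\infty$ yields $|\vertices_H^\msr(\pi)|/M_n \toinp \sum_{H'} \mu_{H'}\, p_\pi(H',H)$ and $M_n(\pi)/M_n \toinp \sum_{H'}\mu_{H'}\, c_\pi(H')\in[1,\E[D^\msr]]$, and taking the ratio proves \eqref{eq:comvect_perc_convinprob}.

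For part (ii), the crucial observation is that percolation preserves each community's vertex set, so $\sum_{a'\in[M_n(\pi)]} |\com_{a'}^\pi| = \sum_{a\in\rightpar}|\com_a| = \he_n$, whence
\[
\E\bigl[D_n^\msr(\pi)\bcond\comvect(\pi)\bigr] = \frac{\he_n}{M_n(\pi)} = \E[D_n^\msr]\cdot \frac{M_n}{M_n(\pi)}.
\]
Under \cref{asmp:convergence} \cref{cond:mean_rdeg}, $\E[D_n^\msr]\to\E[D^\msr]<\infty$, and by part (i), $M_n(\pi)/M_n \toinp \sum_{H'}\mu_{H'}\,c_\pi(H')$; hence the conditional expectation converges in probability to $\E[D^\msr]\big/\sum_{H'} \mu_{H'}\, c_\pi(H')$. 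Finally, using the identity $\sum_H |H|\,p_\pi(H',H) = |H'|$ in a direct computation,
\[
\E[D^\msr(\pi)] = \sum_{H\in\comgraphs} |H|\, \mu_H(\pi) = \frac{\sum_{H'} \mu_{H'}\, |H'|}{\sum_{H'} \mu_{H'}\, c_\pi(H')} = \frac{\E[D^\msr]}{\sum_{H'} \mu_{H'}\, c_\pi(H')},
\]
which is finite and matches the limit above, establishing \eqref{eq:comvect_perc_convinmean}.
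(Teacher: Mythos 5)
Your proof is correct and follows the same decomposition and truncation strategy as the paper: split by the \emph{original} community type $H'$, apply a law of large numbers to the bounded iid counts inside each $\vertices_{H'}^\msr$, and control the tail $\abs{H'}>K$ by $\E[D_n^\msr\ind_{\{D_n^\msr>K\}}]$, which is small uniformly for large $n$ by the uniform integrability implicit in \cref{asmp:convergence} \cref{cond:mean_rdeg}. The paper carries out the same argument but unpacks the expected multiplicity $p_\pi(H',H)$ as a finite sum $\sum_{G\in\supp(H'(\pi))}\P(H'(\pi)\isom G)\,\kappa(H\countin G)$ and invokes multinomial concentration for $(\abs{\vertices_{H'(\pi)\isom G}^\msr})_G$; your shortcut of working directly with $p_\pi(H',H)$ and $c_\pi(H')$ is a genuine simplification of the bookkeeping without changing the substance. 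The one place where you take a slightly different route is part (ii): to match the probability limit $\E[D^\msr]\big/\sum_{H'}\mu_{H'}\,c_\pi(H')$ with $\E[D^\msr(\pi)]$, you compute $\sum_H\abs{H}\,\mu_H(\pi)$ directly via the vertex-conservation identity $\sum_H\abs{H}\,p_\pi(H',H)=\abs{H'}$ and Tonelli, whereas the paper instead verifies $\E[D_n^\msr(\pi)]\to\E[D^\msr(\pi)]$ via stochastic domination and UI of $(D_n^\msr(\pi))_n$ and then identifies the constant by uniqueness of limits. Your direct calculation is cleaner and avoids that detour. You correctly flag that the truncation in part (i) already uses \cref{cond:mean_rdeg} even though the lemma's statement only hypothesizes \cref{cond:limit_com} for that part; this is a mild imprecision that the paper's proof shares.
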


We prove \cref{lem:comvect_perc_conv} in \cref{sss:perc_com_list_conv}. 
Recall that $\vertices(\comvect)$ denotes the disjoint union of vertices in all community graphs, and recall $J_n\sim\Unif[\vertices(\comvect)]$ and \eqref{eq:def_sizebiasing}. 
For $j\in\vertices(\comvect)$, let $\comp^\msc(j,\pi)$ denote the percolated component of $j$ within its community. 
The following statement provides insight into the percolated community sizes and it is also instrumental to the proof of \cref{prop:pi_c_properties}.

\begin{claim}[Representation of size-biased percolated community size]
\label{claim:sizebiased_comsize}
We have the following identity of distributions:
\beq\label{eq:perc_comsize_representation} \wit{D_n^\msr(\pi)} \eqindis \abs{\comp^\msc(J_n,\pi)} - 1. \eeq
\end{claim}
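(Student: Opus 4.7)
The plan is to unpack the size-biasing definition and identify the two distributions by a one-line counting argument, invoking the general principle that size-biasing a partition's block-size distribution equals the distribution of the block-size containing a uniformly chosen ground-set element. The key observation is that bond percolation removes edges but not vertices, so $\vertices(\comvect(\pi))=\vertices(\comvect)$ as labeled sets, and the total number of community vertices stays equal to $\he_n$. Moreover, by the construction of $\comvect(\pi)$, the new community that contains a given $j\in\vertices(\comvect)$ is precisely $\comp^\msc(j,\pi)$, so $\comvect(\pi)$ is the partition of $\vertices(\comvect)$ into the percolated components.

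Working conditionally on $\comvect(\pi)$, I would then compute from \eqref{eq:def_sizebiasing}, using $\E[D_n^\msr(\pi)\cond\comvect(\pi)] = \sum_{a'} \abs{\com_{a'}^\pi}/M_n(\pi) = \he_n/M_n(\pi)$, that for any $k\in\Z^+$,
\begin{equation*}
\P\bigl(\wit{D_n^\msr(\pi)}=k-1 \bcond \comvect(\pi)\bigr)
= \frac{k\,\P(D_n^\msr(\pi)=k \cond \comvect(\pi))}{\E[D_n^\msr(\pi)\cond \comvect(\pi)]}
= \frac{k\cdot\abs{\{a':\abs{\com_{a'}^\pi}=k\}}}{\he_n}.
\end{equation*}
On the other hand, since $J_n\sim\Unif[\vertices(\comvect)]$ is independent of the percolation and each $j\in\com_{a'}^\pi$ satisfies $\comp^\msc(j,\pi)=\com_{a'}^\pi$ by the observation above,
\begin{equation*}
\P\bigl(\abs{\comp^\msc(J_n,\pi)}=k \bcond \comvect(\pi)\bigr)
= \frac{1}{\he_n}\sum_{a'\in[M_n(\pi)]} \abs{\com_{a'}^\pi}\,\ind_{\{\abs{\com_{a'}^\pi}=k\}}
= \frac{k\cdot\abs{\{a':\abs{\com_{a'}^\pi}=k\}}}{\he_n}.
\end{equation*}
The two right-hand sides coincide, hence $\wit{D_n^\msr(\pi)}$ and $\abs{\comp^\msc(J_n,\pi)}-1$ share the same conditional law given $\comvect(\pi)$; integrating yields the unconditional identity \eqref{eq:perc_comsize_representation}.

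I do not expect a genuine obstacle: the content is purely an instance of the general ``vertex-is-a-size-biased-community'' identity. The only points to take care of are (i) to note that percolation preserves the community vertex set, so that $\abs{\vertices(\comvect(\pi))}=\he_n$ is deterministic and the factors of $M_n(\pi)$ cancel cleanly, and (ii) to use the conditional expectation $\E[D_n^\msr(\pi)\cond\comvect(\pi)]$ in the size-biasing denominator, consistent with the usage elsewhere in the paper.
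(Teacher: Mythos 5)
Your proposal is correct and follows essentially the same route as the paper's proof: condition on $\comvect(\pi)$, apply the size-biasing formula \eqref{eq:def_sizebiasing}, use the cancellation of $M_n(\pi)$ together with $\sum_{a'}\abs{\com_{a'}^\pi}=\he_n$ (percolation preserves the community vertex set), recognize the resulting expression as the conditional law of $\abs{\comp^\msc(J_n,\pi)}$, and integrate out the conditioning. The only cosmetic difference is the index shift ($k$ vs.\ $k+1$); the substance and the careful points you flag in (i)--(ii) match the paper exactly.
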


We give the proof of \cref{claim:sizebiased_comsize} in \cref{sss:sizebiased_comsize_representation}.

\subsection{Proof of Theorem \ref{thm:perc_bond}}
\label{ss:proof_bond_perc}
We now prove \cref{thm:perc_bond} subject to \cref{lem:comvect_perc_conv}.

\begin{proof}
By \cref{prop:rigc_perc}, studying percolated component sizes is equivalent to studying component sizes in $\RIGC(\bitd^\msl,\comvect(\pi))$, i.e., the $\RIGC$ representation of $\RIGC(\pi)$. By a slight abuse of notation, we use $\comp_1(\pi)$ and $\comp_2(\pi)$ to denote the largest and second largest component of $\RIGC(\bitd^\msl,\comvect(\pi))$, respectively. Recall $\bm\mu(\pi)$ from \cref{lem:comvect_perc_conv}, and the corresponding limiting community-size distribution $D^\msr(\pi)$. 

By \cref{lem:comvect_perc_conv,rem:randomparam}, our results apply to $\RIGC(\bitd^\msl,\comvect(\pi))$, the $\RIGC$ representation of $\RIGC(\pi)$. In particular, applying \cref{thm:giantcomp} yields that

{\it there exists $\eta_\msl(\pi) \in [0,1]$, the smallest solution of the fixed point equation
\beq\label{eq:fixedpoint_perc} \eta_\msl(\pi) = G_{\wit{ D^\msr(\pi)}}\bigl( G_{\wit D^\msl} \bigl( \eta_\msl(\pi) \bigr) \bigr), \eeq
and $\xi_\msl(\pi) := 1-G_{D^\msl}\bigl( \eta_\msl(\pi) \bigr) \in [0,1]$ such that
\beq\label{eq:perc_giant_conv} \abs{\comp_1(\pi)}/N_n \toinp \xi_\msl(\pi). \eeq
Furthermore, $\xi_\msl(\pi) > 0$ exactly when 
\beq\label{eq:supercrit_perc} 
\E[\wit D^\msl] \E[\wit{ D^\msr(\pi)}] > 1, \eeq
which we call \emph{supercritical percolation}. In this case, $\comp_1(\pi)$ is unique in the sense that $\abs{\comp_2(\pi)} = o_{\sss\P}(N_n)$, and we call $\comp_1(\pi)$ the \emph{percolated giant component}. }

In the following, we show that there exists $\pi_c\in[0,1]$ such that the set of supercritical parameters $\pi$ can be (almost exactly, as explained shortly) characterized by $\pi > \pi_c$. 
We do so by proving that $\xi_\msl(\pi)$ is a non-decreasing function of $\pi$. Subject to this statement, clearly there exists a threshold $\pi_c$ given by
\beq\label{eq:def_pi_c} \pi_c := \inf \{\pi:\, \xi_\msl(\pi) > 0 \} = \inf \bigl\{\pi:\, \E[\wit D^\msl] \E[\wit{ D^\msr(\pi)}] > 1 \bigr\}. \eeq 

This shows that the characterization is almost exact: $\pi > \pi_c$ implies that $\pi$ is supercritical, and $\pi < \pi_c$ implies that $\pi$ is not supercritical. Whether $\pi_c$ itself is supercritical, i.e., whether $\eta_\msl(\pi_c) < 1$, depends on continuity properties of $\pi \mapsto \xi_\msl(\pi)$, which are nontrivial in some cases. We conjecture that $\pi = \pi_c$ is \emph{not supercritical}; in \cite{KomVad19}, we show that this is true in special cases.

We now prove the required monotonicity of $\xi_\msl(\pi)$, by showing that for any $n$ fixed, $\abs{\comp_1(\pi)}/N_n$ is non-decreasing in $\pi$, in the sense of \emph{stochastic domination}: we say that $Y$ stochastically dominates $X$ and denote $X \preceq Y$ if $\P(Y \geq x) \geq \P(X \geq x)$ for all $x\in\R$. Subject to $\abs{\comp_1(\pi)}/N_n$ being non-decreasing in $\pi$, clearly the required monotonicity follows for the limit $\xi_\msl(\pi)$ as well, by \eqref{eq:perc_giant_conv}. We prove monotonicity for fixed $n$ through the so-called Harris-coupling, defined as follows. 
To each edge $e\in\edges(\RIGC)$, we assign independent standard uniform random variables $U_e$, and for any $\pi$, define $X_e^\pi := \ind_{\{U_e \leq \pi\}}$. The edges retained in $\RIGC(\pi)$, i.e., in $\pi$-percolation on $\RIGC$, are exactly the edges $e$ such that $X_e^\pi = 1$. Clearly, for $\pi_1 < \pi_2$, we have $X_e^{\pi_1} \leq X_e^{\pi_2}$ for any edge $e$, thus $\RIGC(\pi_1)$ is a subgraph (edge-subgraph) of $\RIGC(\pi_2)$. Denote by $\comp(v,\pi)$ the component of $v$ in $\RIGC(\pi)$. Suppose $v\in\comp_1(\pi_1)$, then 
\beq \abs{\comp_1(\pi_1)}
= \abs{\comp(v,\pi_1)} 
\leq \abs{\comp(v,\pi_2)} 
\leq \abs{\comp_1(\pi_2)}. \eeq
That is, $\abs{\comp_1(\pi_1)} \leq  \abs{\comp_1(\pi_2)}$ holds almost surely under this coupling, which implies the stochastic domination $\abs{\comp_1(\pi_1)}/N_n \preceq \abs{\comp_1(\pi_2)}/N_n$, as required. 
This concludes the proof of \cref{thm:perc_bond} subject to \cref{lem:comvect_perc_conv}. 
\end{proof}

\subsection{Percolated community list: convergence and percolated community sizes}
\label{ss:perc_com_list}
In this section, we provide the proof for \cref{lem:comvect_perc_conv,claim:sizebiased_comsize}.

\subsubsection{Convergence of the percolated community list}
\label{sss:perc_com_list_conv}

\begin{proof}[Proof of \cref{lem:comvect_perc_conv}]
We first prove \eqref{eq:comvect_perc_convinprob}. Recall that $\comvect(\pi) = \bigl(\com_a^\pi\bigr)_{a\in[M_n(\pi)]}$ (see \cref{ss:precolation_representation}). 
Recall \eqref{eq:def_VH} and for any possible community graph $H \in\comgraphs$, we introduce
\beq\label{eq:def_VHpi} \vertices_H^\msr(\pi) := \{ a\in[M_n(\pi)]:\; \com_a^\pi \isom H \}. \eeq
Our aim is to prove that the following quantity converges in probability to some constant: 
\beq\label{eq:perc_com_fraction} \nn\mu_H(\pi)
= \abs{\vertices_{H}^{\msr}(\pi)} \big/ M_n(\pi) 
= \frac{\abs{\vertices_{H}^{\msr}(\pi)}}{M_n} \cdot \frac{M_n}{M_n(\pi)} . \eeq
In the following, we prove convergence in probability to respective \emph{constants} for both factors separately. This implies convergence in probability for the product, despite the dependence. 

Recall that each new community in $\comvect(\pi)$ is a connected component under percolation on some original community in $\comvect$. Thus, to count the frequencies and total number of new communities, we break it down with respect to the original communities, and first study percolation on an arbitrary community and the frequency of each outcome.

We introduce some notation. Recall that $\comgraphs$ denotes the set of possible community graphs: simple, finite, connected graphs, with a fixed arbitrary labeling. For an arbitrary $F\in\comgraphs$, denote bond percolation on $F$ by $F(\pi)$. We introduce an object to compare realizations of this random graph to: let $\graphs$ denote the set of simple, finite, not necessarily connected, unlabeled graphs. Denote by $\supp(F(\pi))$ the set of all $G\in\graphs$ that are isomorphic to some possible realization of $F(\pi)$, which is exactly the set of edge-subgraphs of $F$. Note that for a fixed $F$, $\supp(F(\pi))$ is a finite set. Recall \eqref{eq:def_VH}. For $F\in\comgraphs$ and $G\in\supp(F(\pi))$, define the random subset of original $\msr$-vertices with original community graph $F$ that become isomorphic to $G$ under percolation:
\beq\label{eq:def_VFpiG} \vertices_{F(\pi)\isom G}^\msr := \{ a \in \vertices_F^\msr :\; \com_a(\pi) \isom G \}. \eeq
Note that
\beq\label{eq:perc_proof3} \sum_{G\in\supp(F(\pi))} \abs{\vertices_{F(\pi)\isom G}^\msr} = \abs{\vertices_F^\msr}, \eeq
with $\vertices_F^\msr$ defined as in \eqref{eq:def_VH}. 
Since percolation on different communities is independent, $(\abs{ \vertices_{F(\pi)\isom G}^\msr })_{G\in\supp(F(\pi))}$ has a multinomial distribution with number of trials $\abs{\vertices_F^\msr}$ and probability vector $(\P(F(\pi)\isom G))_{G\in\supp(F(\pi))}$. Thus by multinomial concentration and $\abs{\vertices_F^\msr}/M_n \to \mu_F$ (\cref{asmp:convergence} \cref{cond:limit_com}),
\beq\label{eq:multinom} M_n^{-1}\cdot \bigl( \abs{ \vertices_{F(\pi)\isom G}^\msr } \bigr)_{G\in\supp(F(\pi))} 
\toinp \mu_F \cdot \bigl(\P(F(\pi)\isom G)\bigr)_{G\in\supp(F(\pi))}. \eeq
In the following, we study how one or more copies of $H\in\comgraphs$ can be produced by percolating some $F\in\comgraphs$. 
We define the multiplicity of $H$ in any $G\in\graphs$, denoted by $\kappa(H \countin G) \geq 0$, as the number of distinct connected components in $G$ that are isomorphic to $H$. Note that there exists some (possibly more than one) $G\in\supp(F(\pi))$ such that $\kappa(H \countin G) \geq 1$ exactly when $H$ is isomorphic to an edge-subgraph of $F$. 
We compute, for arbitrary $H\in\comgraphs$,
\beq\label{eq:perc_com_pmf} M_n^{-1}\, \abs{\vertices_{H}^{\msr}(\pi)} 
= \sum_{F\in\comgraphs} 
\;\sum_{G\in\supp(F(\pi))} 
M_n^{-1} \,\abs{ \vertices_{F(\pi)\isom G}^\msr } \cdot \kappa(H \countin G). \eeq
We claim that 
\beq\label{eq:perc_com_lim} M_n^{-1}\, \abs{\vertices_{H}^\msr(\pi)} 
\toinp \sum_{F\in\comgraphs} 
\;\sum_{G\in\supp(F(\pi))} 
\mu_F \cdot \P(F(\pi)\isom G) \cdot \kappa(H \countin G) < \infty. \eeq
For convenience, we denote the inner sums
\beq \bal T_n^\pi(F,H) &:= \sum_{G\in\supp(F(\pi))} 
M_n^{-1} \,\abs{ \vertices_{F(\pi)\isom G}^\msr } \cdot \kappa(H \countin G), \\
T^\pi(F,H) &:= \sum_{G\in\supp(F(\pi))} 
\mu_F \cdot \P(F(\pi)\isom G) \cdot \kappa(H \countin G). \eal \eeq
Since $\supp(F(\pi))$ is a finite set, and $\kappa(H \countin G)$ are constants, by \eqref{eq:multinom} the linear combinations $T_n^\pi(F,H) \toinp T^\pi(F,H)$ as $n\to\infty$, for each $H,F\in\comgraphs$ and $\pi$. Next, we use a truncation argument to prove the convergence of the infinite sum over $F\in\comgraphs$. The same argument also reveals the rhs of \eqref{eq:perc_com_lim} to be finite.

Note that, as $\kappa(H \countin G)$ counts \emph{components} of $G$ that are isomorphic to $H$, $\kappa(H \countin G) \leq \abs{G}\big/\abs{H} = \abs{F}\big/\abs{H}$. Thus, for a fixed $F$, by \eqref{eq:perc_proof3} almost surely
\beq\label{eq:perc_com_tailbound1} T_n^\pi(F,H)
\leq \sum_{G\in\supp(F(\pi))} 
M_n^{-1}\, \abs{ \vertices_{F(\pi)\isom G}^\msr } \cdot \frac{\abs{F}}{\abs{H}} 
= M_n^{-1}\, \abs{\vertices_F^\msr} \cdot \frac{\abs{F}}{\abs{H}}
= \frac{\nn\mu_{F} \,\abs{F}}{\abs{H}}.
\eeq
By \cref{asmp:convergence} \cref{cond:mean_rdeg},
\beq\label{eq:perc_com_tailbound2} \sum_{F\in\comgraphs} \nn\mu_{F} \abs{F} = \E[D_n^\msr] \to \E[D^\msr] = \sum_{F\in\comgraphs} \mu_{F} \,\abs{F} < \infty. \eeq
Thus for arbitrary $\eps>0$, there exists $K=K(\eps)$ and $n_0=n_0(\eps)$ such that for all $n\geq n_0$,
\beq\label{eq:perc_com_tailbound3} \sum_{F\in \comgraphs, \abs{F}>K} \mu_F \,\abs{F} < \eps / 6, \qquad 
\sum_{F\in \comgraphs, \abs{F}>K} \nn\mu_{F} \,\abs{F} < \eps / 3. \eeq
Combining (\ref{eq:perc_com_tailbound1}-\ref{eq:perc_com_tailbound3}), we obtain that for $n\geq n_0$, almost surely
\beq\label{eq:perc_com_tailbound4} 0 \leq 
\sum_{\substack{F\in\comgraphs\\\abs{F}>K}} T_n^\pi(F,H)
\leq \sum_{\substack{F\in\comgraphs\\\abs{F}>K}} \frac{\nn\mu_{F}\,\abs{F}}{\abs{H}} 
< \eps/3. \eeq
Analogously, using \eqref{eq:perc_com_tailbound2} and the identity $\sum_{G\in\supp(F(\pi))} \P(F(\pi)\isom G) = 1$,
\beq\label{eq:perc_com_tailbound5} 0 \leq 
\sum_{\substack{F\in\comgraphs\\\abs{F}>K}} \!\! T^\pi(F,H) 
\leq \sum_{\substack{F\in\comgraphs\\\abs{F}>K}} 
\sum_{G\in\supp(F(\pi))} \!\!\!\!
\mu_F \,\P(F(\pi)\isom G) \frac{\abs{F}}{\abs{H}} 
= \sum_{\substack{F\in\comgraphs\\\abs{F}>K}} 
\mu_F \frac{\abs{F}}{\abs{H}} 
< \eps/6. \eeq
Note that the number of $F\in\comgraphs$ such that $\abs{F}\leq K$ is finite, thus by \eqref{eq:multinom}, it follows that the truncated sum converges in probability, i.e., as $n\to\infty$,
\beq\label{eq:perc_com_tailbound6} 
\P \biggl( \Bigl\lvert 
\sum_{\substack{F\in\comgraphs\\\abs{F}\leq K}} T_n^\pi(F,H) 
- \sum_{\substack{F\in\comgraphs\\\abs{F}\leq K}} T^\pi(F,H) 
\Bigr\rvert < \eps/2 \biggr) \to 1. \eeq
Combining (\ref{eq:perc_com_tailbound4}-\ref{eq:perc_com_tailbound6}) yields the convergence in probability claimed in \eqref{eq:perc_com_lim}. We see that the rhs of \eqref{eq:perc_com_lim} can be written as the sum of a finite sum $\sum_{F\in\comgraphs,\abs{F}\leq K} T^\pi(F,H)$ in \eqref{eq:perc_com_tailbound6} and a bounded quantity $\sum_{F\in\comgraphs,\abs{F}>K} T^\pi(F,H)$ in \eqref{eq:perc_com_tailbound5} and is thus finite.

Next, we study $M_n(\pi)/M_n$, that is the reciprocal of the second factor in \eqref{eq:perc_com_fraction}. Recall that for $G\in\graphs$, the number of connected components in $G$ is denoted by $c(G)$. Recall \eqref{eq:def_VFpiG} and compute
\beq M_n(\pi)/M_n 
= \sum_{F\in\comgraphs} \;\sum_{G\in\supp(F(\pi))} 
M_n^{-1} \, \abs{\vertices_{F(\pi)\isom G}^\msr} \cdot c(G). \eeq
We note the similarity between this formula and \eqref{eq:perc_com_pmf}, as well as $c(G) \leq \abs{G} = \abs{F}$. Thus, with analogous arguments and the same truncation as above, we conclude that
\beq\label{eq:perc_com_rescaling} M_n(\pi)/M_n 
\toinp \sum_{F\in\comgraphs} \;\sum_{G\in\supp(F(\pi))} 
\mu_F \cdot \P(F(\pi)\isom G) \cdot c(G) < \infty. \eeq
Combining \eqref{eq:perc_com_fraction}, \eqref{eq:perc_com_pmf} and \eqref{eq:perc_com_rescaling} yields 
\beq \bal \nn\mu_H(\pi)
&= \frac{\abs{\vertices_{H}^{\msr}(\pi)}}{M_n} \Big/ \frac{M_n(\pi)}{M_n} \\
&\toinp \frac{\sum_{F\in\comgraphs} \sum_{G\in\supp(F(\pi))} 
\mu_F \cdot \P(F(\pi)\isom G) \cdot \kappa(H \countin G)}
{\sum_{F\in\comgraphs} \sum_{G\in\supp(F(\pi))} 
\mu_F \cdot \P(F(\pi)\isom G) \cdot c(G)} =: \mu_H(\pi). \eal \eeq
This concludes the proof of \eqref{eq:comvect_perc_convinprob}. Note that community sizes cannot increase under percolation, thus $D_n^\msr(\pi) \preceq D_n^\msr$, and $(D_n^\msr)_{n\in\N}$ is tight (since it is also UI). Consequently, by $\P\bigl(D_n^\msr(\pi) \geq K\bigr) \leq \P\bigl( D_n^\msr \geq K \bigr)$, $(D_n^\msr(\pi))_{n\in\N}$ is also tight, thus 
$\sum_{H\in\comgraphs} \nn\mu_H(\pi) = 1$ for each $n\in\N$ implies that $\sum_{H\in\comgraphs} \mu_H(\pi) = 1$. 

Next, we prove \eqref{eq:comvect_perc_convinmean}. 
By definition, we compute
\beq \E\bigl[ D_n^\msr(\pi) \bcond \comvect(\pi) \bigr] 
= \frac{1}{M_n(\pi)} \sum_{a\in[M_n(\pi)]} \abs{\com_a^\pi} 
= \frac{\abs{\vertices(\comvect(\pi))}}{M_n(\pi)}. \eeq
Recall that, by the definition of $\comvect(\pi)$, $\abs{\vertices(\comvect(\pi))} = \abs{\vertices(\comvect)} = \he_n$. Thus,
\beq\bal &\E\bigl[ D_n^\msr(\pi) \bcond \comvect(\pi) \bigr] 
= \frac{\he_n}{M_n(\pi)} 
= \frac{\he_n}{M_n} \cdot \frac{M_n}{M_n(\pi)} 
= \E[D_n^\msr] \cdot \frac{M_n}{M_n(\pi)} \\
&\toinp \frac{\E[D^\msr]}{\sum_{F\in\comgraphs} \sum_{G\in\supp(F(\pi))} \mu_F \cdot \P(F(\pi)\isom G) \cdot c(G)} =: C < \infty, \eal\eeq
by \cref{asmp:convergence} \cref{cond:mean_rdeg} and \eqref{eq:perc_com_rescaling}. 
From \eqref{eq:comvect_perc_convinprob}, it follows that $D_n^\msr(\pi) \toindis D^\msr(\pi)$ as $n\to\infty$. Further, the stochastic domination $0 \leq D_n^\msr(\pi) \preceq D_n^\msr$ also holds conditionally on $\comvect(\pi)$. 
By \cref{asmp:convergence} \cref{cond:mean_rdeg}, 
$(D_n^\msr)_{n\in\N}$ is UI. For any fixed $K$, by stochastic domination, $\E\bigl[ D_n^\msr(\pi) \ind_{\{D_n^\msr(\pi) > K\}} \bigr] \leq \E\bigl[ D_n^\msr \ind_{\{D_n^\msr > K\}} \bigr]$, thus $(D_n^\msr(\pi))_{n\in\N}$ is also UI, and consequently $\E[D_n^\msr(\pi)] \to \E[D^\msr(\pi)] < \infty$. Noting that $\E\bigl[ D_n^\msr(\pi) \bcond \comvect(\pi) \bigr] \leq \E[D_n^\msr]$ by stochastic domination, and $(\E[D_n^\msr])_{n\in\N}$ is bounded, 
$\E\bigl[ D_n^\msr(\pi) \bcond \comvect(\pi) \bigr] \toinp C$ implies $\E\bigl[ \E[ D_n^\msr(\pi) \cond \comvect(\pi) ] \bigr] = \E[ D_n^\msr(\pi) ] \to C$. Since the limit is unique, we obtain that necessarily $C = \E[D^\msr(\pi)]$. 
This concludes the proof of \eqref{eq:comvect_perc_convinmean}, and consequently the proof of \cref{lem:comvect_perc_conv}.
\end{proof}

\subsubsection{Representation of size-biased community sizes}
\label{sss:sizebiased_comsize_representation}

\begin{proof}[Proof of \cref{claim:sizebiased_comsize}]
Recall that for $j\in\vertices(\com_a)$, $\comp^\msc(j,\pi)$ denotes its connected component under $\pi$-percolation on $\com_a$. 
Recall the percolated community list $\comvect(\pi) = (\com_a^\pi)_{a\in[M_n(\pi)]}$. Using the definition of the empirical distribution $D_n^\msr(\pi)$ and its transform by \eqref{eq:def_sizebiasing}, we compute the empirical mass function of $\wit{D_n^\msr(\pi)}$
\beq \bal \P\bigl( \wit{D_n^\msr(\pi)} = k \bcond \comvect(\pi)\bigr) 
&= \frac{(k+1)\, \P\bigl( D_n^\msr(\pi) = k+1 \bcond \comvect(\pi) \bigr)}{\E\bigl[ D_n^\msr(\pi) \bcond \comvect(\pi) \bigr]} \\
&= \frac{(k+1) \tfrac{1}{M_n(\pi)} \sum_{a\in[M_n(\pi)]} \ind_{\{\abs{\com_a^\pi} = k+1\}}}{\tfrac{1}{M_n(\pi)} \sum_{a\in[M_n(\pi)]} \abs{\com_a^\pi} }. \eal \eeq
Note that in the numerator, we can replace $(k+1) \cdot \ind_{\{\abs{\com_a^\pi} = k+1\}} = \abs{\com_a^\pi} \cdot \ind_{\{\abs{\com_a^\pi} = k+1\}}$. Also note that, by construction of $\comvect(\pi)$ (see \cref{ss:precolation_representation}), the sum in the denominator equals $\sum_{a\in[M_n(\pi)]} \abs{\com_a^\pi} = \abs{\vertices(\comvect)} = \sum_{a\in[M_n]} \abs{\com_a} = \he_n$, with $\he_n$ from \eqref{eq:def_halfedges}. Thus,
\beq \bal &\P\bigl( \wit{D_n^\msr(\pi)} = k \bcond \comvect(\pi)\bigr) 
= \frac{\sum_{a\in[M_n(\pi)]} \abs{\com_a^\pi} \cdot \ind_{\{\abs{\com_a^\pi} = k+1\}} }{\he_n} \\
&= \frac{1}{\he_n} \sum_{a\in[M_n(\pi)]} \sum_{j\in\vertices(\com_a^\pi)} \ind_{\{\abs{\com_a^\pi} = k+1\}} 
= \frac{1}{\he_n} \sum_{a\in[M_n(\pi)]} \sum_{j\in\vertices(\com_a^\pi)} \ind_{\{\abs{\comp^\msc(j,\pi)} = k+1\}}, \eal \eeq
where we have used that for $j\in\vertices(\com_a^\pi)$, its percolated component is exactly $\com_a^\pi$, thus $\abs{\comp^\msc(j,\pi)} = \abs{\com_a^\pi}$. Once again by $\cup_{a\in[M_n(\pi)]} \vertices(\com_a^\pi) = \vertices(\comvect)$,
\beq \begin{split} &\P\bigl( \wit{D_n^\msr(\pi)} = k \bcond \comvect(\pi)\bigr) 
= \frac{1}{\he_n} \sum_{j\in\vertices(\comvect)} \ind_{\{\abs{\comp^\msc(j,\pi)} = k+1\}} \\
&= \P\bigl( \abs{\comp^\msc(J_n,\pi)} = k+1 \bcond \comvect(\pi) \bigr) 
= \P\bigl( \abs{\comp^\msc(J_n,\pi)} - 1 = k \bcond \comvect(\pi) \bigr). \end{split} \eeq
It follows that $\P\bigl( \wit{D_n^\msr(\pi)} = k \bigr) = \P\bigl( \abs{\comp^\msc(J_n,\pi)} - 1 = k \bigr)$ for all $k$, which implies \eqref{eq:perc_comsize_representation}. 
This concludes the proof of \cref{claim:sizebiased_comsize}.
\end{proof}

\subsection{Proof of Proposition \ref{prop:pi_c_properties}}
\label{ss:pi_c_properties}
First, we prove \eqref{eq:pi_c_representation}. Recall that in \eqref{eq:def_pi_c}, we have identified $\pi_c = \inf \{\pi:\, \E[\wit D^\msl] \E[\wit{D^\msr(\pi)}] > 1 \}$. Thus, it is sufficient to show that
\beq\label{eq:pi_c_repres_reduction} \E\bigl[ \wit{D^\msr(\pi)} \bigr] 
= \E\bigl[ \abs{\rg} \,(\abs{\comp^\rg(U_\rg,\pi)}-1) \bigr] / \E[D^\msr], \eeq
where $\rg$ is a random graph with pmf $\bm\mu$, $U_\rg \cond \rg \distr \Unif[\rg]$, and $\E[\cdot]$ on the right hand side denotes total expectation (wrt the joint measure of $\rg$, $U_\rg$ and the percolation). 

To express its expectation, we analyze the distribution of $\wit{D^\msr(\pi)}$, using that $\wit{D_n^\msr(\pi)} \toindis \wit{D^\msr(\pi)}$ by \cref{lem:comvect_perc_conv} and $\abs{\comp^\msc(J_n,\pi)} - 1 \eqindis \wit{D_n^\msr(\pi)}$ by \cref{claim:sizebiased_comsize}. As before, $\comp^\msc(j,\pi)$ denotes the component of $j\in\com_a$ under $\pi$-percolation on $\com_a$, and $J_n\distr\Unif[\vertices(\comvect)]$, where $\vertices(\comvect)$ denotes the disjoint union of all vertices in community graphs. 

Note that $J_n$ is chosen uniformly at random among all community roles, which is equivalent to choosing a community in a \emph{size-biased} fashion, then choosing a uniform member of the chosen community. 
In the following, we use this observation to analyze the distribution of $\abs{\comp^\msc(J_n,\pi)}$. 
Recall from \cref{ss:RIGC_def} that all isomorphic community graphs are labeled in the same way and community roles in $H$ are described by a label $l\in[\abs{H}]$. 
Intuitively, for the distribution of $\comp^\msc(J_n,\pi)$, only the community graph and label of $J_n$ matters, thus we introduce the concept of \emph{type} to represent this pair. Recall that a community role $j\in\vertices(\comvect)$ that is in $\com_a$ and has label $l$ can be represented by the pair $(a,l)$, and define its type as $\type(j) := (\com_a,l)$. Using that $J_n \distr \Unif[\vertices(\comvect)]$, we compute the distribution of its random type $\type(J_n) =: (\rg_n^\star,I_n)$. Using \eqref{eq:def_mu} and that $\abs{\vertices(\comvect)} = \he_n = M_n \E[D_n^\msr]$ by \cref{rem:asmp_consequences} \cref{cond:partition_ratio}, 
\beq\label{eq:randomtype_graph} \bal \P\bigl( \rg_n^\star = H \bigr) 
= \P\bigl( J_n \in \cup_{a\in\rightpar_H} \vertices(\com_a) \bigr) 
= \frac{\abs{\rightpar_H} \cdot \abs{H}}{\abs{\vertices(\comvect)}} 
= \frac{\abs{\rightpar_H} \cdot \abs{H}}{M_n \E[D_n^\msr]} 
= \frac{\nn\mu_H \cdot \abs{H}}{\E[D_n^\msr]} 
=: \mu_H^{{\sss(n)},\star}. \eal\eeq
Indeed, as intuition suggests, $\rg_n^\star$ is chosen in a size-biased fashion. It is also intuitive that conditionally on $\rg_n^\star$, $I_n$ is uniform on $[\abs{\rg_n^\star}]$. Noting that in each community graph $\com_a \isom H$ there is one vertex with label $l$ and $\abs{H}$ vertices in total, we indeed obtain
\beq\label{eq:randomtype_label} \P\bigl( I_n = l \bcond \rg_n^\star = H \bigr) 
= \frac{\abs{\rightpar_H} \cdot 1}{\abs{\rightpar_H} \cdot \abs{H}} 
= \frac{1}{\abs{H}}. \eeq
By \cref{asmp:convergence} \cref{cond:limit_com}and \cref{cond:mean_rdeg}, the joint mass function converges:
\beq \P\bigl( (\rg_n^\star,I_n) = (H,l) \bigr) 
= \frac{\nn \mu_H \cdot \abs{H}}{\E[D_n^\msr]} \frac{1}{\abs{H}}
\to \frac{\mu_H \cdot \abs{H}}{\E[D^\msr]} \frac{1}{\abs{H}}
=: \P\bigl( (\rg^\star,I) = (H,l) \bigr), \eeq
so that $(\rg_n^\star,I_n) \toindis (\rg^\star,I)$. We now return to studying $\abs{\comp^\msc(J_n,\pi)}$. Let us we write $\comp^\msc(j,\pi)$ in terms of $\type(j) = (H,l)$ as $\comp^H(l,\pi) \eqindis \comp^\msc(j,\pi)$ (see \cref{prop:pi_c_properties}), and consider $\comp^\msc(J_n,\pi) \eqindis \comp^{\rg_n^\star}(I_n,\pi)$ as a \emph{mixture}. We have shown that the mixing variable $(\rg_n^\star,I_n) \toindis (\rg^\star,I)$, and since it has countably many values, this implies\footnote{The pointwise difference of mass functions of the mixtures can be bounded in terms of the total variation distance of the mixing variables.} that the mixture also converges in distribution: $\abs{\comp^\msc(J_n,\pi)} \eqindis \abs{\comp^{\rg_n^\star}(I_n,\pi)} \toindis \abs{\comp^{\rg^\star}(I,\pi)}$. Recall that $\wit{D_n^\msr(\pi)} + 1 \eqindis \abs{\comp^\msc(J_n,\pi)}$ by \cref{claim:sizebiased_comsize} and $\wit{D_n^\msr(\pi)} \toindis \wit{D^\msr(\pi)}$ by \cref{lem:comvect_perc_conv}. Necessarily, by the uniqueness of limit, $\wit{D^\msr(\pi)} + 1 \eqindis \abs{\comp^{\rg^\star}(I,\pi)}$. We compute 
\beq\label{eq:perc_new_proof2} \bal \E\bigl[ \wit{D^\msr(\pi)} \bigr] 
&= \E\bigl[ \abs{\comp^{\rg^\star}(I,\pi)} -1 \bigr] 
= \E\Bigl[ \E\bigl[ \abs{\comp^{\rg^\star}(I,\pi)} - 1 \bcond (\rg^\star,I) \bigr] \Bigr] \\
&= \sum_{H\in\comgraphs} \sum_{i\in[\abs{H}]} \E\bigl[ \abs{\comp^H(i,\pi)} - 1 \bigr] \frac{1}{\abs{H}} \frac{\mu_H \abs{H}}{\E[D^\msr]} \\
&= \sum_{H\in\comgraphs} \mu_H \abs{H} 
\biggl( \frac{1}{\abs{H}} \sum_{i\in[\abs{H}]} \E\bigl[ \abs{\comp^H(i,\pi)} - 1 \bigr] \biggr)  \Big/ \E[D^\msr]. \eal \eeq
Recall that $\rg$ denotes a random graph with pmf $\bm\mu$ and $U_\rg \cond \rg \distr \Unif[\rg]$; in particular, $U_H \distr \Unif[H]$ for fixed $H\in\comgraphs$. Then
\beq \bal \E[\wit{D^\msr(\pi)}] 
&= \sum_{H\in\comgraphs} \mu_H \abs{H} \cdot \E\bigl[ \abs{\comp^H(U_H,\pi)} - 1 \bigr] \big/ \E[D^\msr] \\
&= \E\Bigl[ \abs{\rg} \cdot \E\bigl[\abs{\comp^\rg(U_\rg,\pi)} - 1 \bcond \rg \bigr] \Bigr] \Big/ \E[D^\msr], \eal \eeq
which is equivalent to \eqref{eq:pi_c_repres_reduction} 
by the tower property of conditional expectation. This concludes the proof of \eqref{eq:pi_c_representation}. 

Next, we prove that $\pi_c<1$ by showing that there exists some $\pi<1$ such that $\E[\wit{D^\msr(\pi)}] > 1/\E[\wit D^\msl]$, which is equivalent to \eqref{eq:supercrit_perc}. 
Using \eqref{eq:perc_new_proof2}, for some $K\in\Z^+$ to be specified later, 
we bound
\beq\label{eq:perc_new_proof3} \E[\wit{D^\msr(\pi)}] 
\geq \sum_{\substack{H\in\comgraphs\\\abs{H}\leq K}} \frac{\mu_H}{\E[D^\msr]} \sum_{i\in[\abs{H}]} \E\bigl[ \abs{\comp^H(i,\pi)} -1 \bigr] =: S_{\leq K}(\pi). \eeq
We show that, for appropriately chosen $K$ and $\pi$, $S_{\leq K}(\pi) > 1/\E[\wit D^\msl]$ by comparing both to $S_{\leq K}(1)$ as an intermediate step. In fact, noting that $\comp^H(i,1)$ is the unpercolated component of $i$, i.e., the entire graph $H$, we have
\beq \begin{split} 
S_{\leq K}(1) 
&= \sum_{\substack{H\in\comgraphs\\\abs{H}\leq K}} \frac{\mu_H}{\E[D^\msr]} \sum_{i\in[\abs{H}]} \E\bigl[ \abs{\comp^H(i,1)} -1 \bigr] 
= \sum_{\substack{H\in\comgraphs\\\abs{H}\leq K}} \frac{\mu_H}{\E[D^\msr]} \sum_{i\in[\abs{H}]} (\abs{H} - 1) \\
&= \sum_{\substack{H\in\comgraphs\\\abs{H}\leq K}} \frac{\mu_H}{\E[D^\msr]} \abs{H} (\abs{H} - 1) 
= \frac{\E\bigl[ \abs{\rg}(\abs{\rg}-1) \ind_{\{\abs{\rg}\leq K\}}\bigr]}{\E[D^\msr]}, \end{split} \eeq
where $\rg$ denotes a random graph with pmf $\bm\mu$. Then, by \cref{asmp:convergence} \cref{cond:limit_rdeg} and \eqref{eq:def_sizebiasing},
\beq 
S_{\leq K}(1) 
= \frac{\E\bigl[ D^\msr(D^\msr-1) \ind_{\{D^\msr\leq K\}}\bigr]}{\E[D^\msr]}
= \E\bigl[ \wit D^\msr \cdot \ind_{\{\wit D^\msr < K\}} \bigr]. \eeq
Clearly, the partial sums $S_{\leq K}(1) \to \E[\wit D^\msr]$ as $K\to\infty$. 
By the supercriticality condition \cref{cond:supercrit}, we know that $\E[\wit D^\msr] > 1/\E[\wit D^\msl]$, thus 
there exists $K$ large enough so that $S_{\leq K}(1) > 1/\E[\wit D^\msl]$. We fix such a $K$. 
Next, we compare $S_{\leq K}(\pi)$ and $S_{\leq K}(1)$. Note that $S_{\leq K}(\pi)$ is a \emph{finite} sum, as $\{H\in\comgraphs:\, \abs{H}\leq K\}$ is a finite set. Further, for any fixed $(H,i)$, $\E\bigl[ \abs{\comp^H(i,\pi)} \bigr]$ is a polynomial in $\pi$, thus it is \emph{continuous}. Consequently, $\pi \mapsto S_{\leq K}(\pi)$ is continuous, and by $S_{\leq K}(1) > 1/\E[\wit D^\msl]$, we can choose $\pi<1$ sufficiently close to $1$ so that $S_{\leq K}(\pi) > 1/\E[\wit D^\msl]$. We have thus shown that for some $K\in\Z^+$ and $\pi<1$, as chosen above,
\beq \E[\wit{D^\msr(\pi)}] \geq S_{\leq K}(\pi) > 1/\E[\wit D^\msl], \eeq
and we conclude that indeed $\pi_c < 1$. This concludes the proof of \cref{prop:pi_c_properties}.
\proofends

\section{Further properties of the giant of the {\rm RIGC}}
\label{s:local_global}
In this section, we explore the relation between ``local'' and ``global'' properties of the $\RIGC$ and the (underlying) $\BCM$. 
In \cite{vdHKomVad18locArxiv}, we have shown that neighborhoods in the $\BCM$ and $\RIGC$ are respectively well-approximated by a branching process (that we recall shortly) and an appropriate transform of it (see \cite[Section 5.1]{vdHKomVad18locArxiv}). 
In \cref{s:proof_giant}, we have analyzed the largest component of each model using the continuous-time exploration introduced in \cref{ss:exploration}. As for the configuration model \cite{MolRee95,MolRee98} or the Erd\H{o}s-R\'{e}nyi random graph \cite{ErdosRenyi60evolution}, we show below that also in the $\BCM$ and $\RIGC$, the proportion of the giant component is given by the survival probability of the branching process approximating the neighborhoods. However, it is not entirely trivial to prove results on the largest component using \emph{solely} the branching process approximation, which is why we turn to the continuous-time exploration for an independent proof, and make the connection afterwards. This connection then allows us to derive ``local'' properties of the giant component; we present the degree distribution within the giant and as a (not entirely trivial\footnote{The difficulty lies in identifying when the obtained degree distribution has finite or infinite mean.}) consequence, the number of edges in the giant, however the technique is more generally applicable.

\subsection{The approximating BPs}
\label{ss:BPapprox}
In this section, we describe the branching processes (see e.g.\ \cite{athreya-ney} for an introduction on branching processes) related to the local weak limit of the $\BCM$ and the $\RIGC$ (intuitively speaking, the local weak limit describes the neighborhood of a typical vertex; for a precise definition, see \cite[Definition 2.7 and Section 3.1]{vdHKomVad18locArxiv}). Further, we show how these branching processes ($\BP$s) are related to the constants $\eta_\msl$, $\eta_\msr$, $\xi_\msl$ and $\xi_\msr$ (defined in \cref{thm:giantcomp,thm:degrees_giant,cor:BCM_giantcomp}) describing the size and properties of the giant component of the $\BCM$ and $\RIGC$.

\smallskip
\paragraph*{\textbf{Description of the approximating BPs}}
We first describe the discrete-time branching process $\BP_\msl$ defined in \cite[Section 4.1]{vdHKomVad18locArxiv} that approximates the neighborhood of $\msl$-vertices in the underlying $\BCM$. Recall $D^\msl$ and $D^\msr$ from \cref{asmp:convergence} \cref{cond:limit_ldeg} and \cref{cond:limit_rdeg} respectively, and further recall \eqref{eq:def_sizebiasing}. We start with a single root in generation $0$ that produces offspring distributed as $D^\msl$. Every other individual in an even generation has offspring distributed as $\wit D^\msl$, while every individual in an odd generation has offspring distributed as $\wit D^\msr$. The offspring of any two individuals are independent. In \cite[Section 4.1]{vdHKomVad18locArxiv}, the branching process $\BP_\msr$ that approximates the neighborhood of $\msr$-vertices in the underlying $\BCM$ is defined analogously by reversing the roles of $\msl$ and $\msr$.

In the following, we show the relation between the supercriticality of the $\BCM$ and the supercriticality of the approximating $\BP$s. Just like in \cref{thm:giantcomp,thm:BCM_giantcomp}, we exclude the special case $\P(D^\msl = 2) = \P(D^\msr = 2) = 1$, or equivalently, $\P(\wit D^\msl = 1) = \P(\wit D^\msr = 1) = 1$. The reason is that in this special case for both $\BP_\msl$ and $\BP_\msr$ exactly one offspring is guaranteed in each generation (except the root that has offspring $2$), and thus both $\BP$s are guaranteed infinite survival\footnote{For an individual in the process, the survival event means producing an infinite (sub)tree rooted at the vertex, for the BP, the survival event means the survival of the root as an individual.} in an \emph{atypical} way. Thus we exclude this special case from now on.

Next, we show that $\BP_\msl$ and $\BP_\msr$ are supercritical exactly when the supercriticality condition \cref{cond:supercrit} of the graph holds; and the survival probabilities are $\xi_\msl$ (from \cref{thm:giantcomp}) and $\xi_\msr$ (from \cref{cor:BCM_giantcomp}), respectively. We prove the statement for $\BP_\msl$ first. Consider the subprocess $\wih\BP_\msl$ formed by the descendants in odd generations of the first child of the root, so that offspring in $\wih\BP_\msl$ is defined as grandchildren in $\BP_\msl$. Consequently, $\wih\BP_\msl$ is a Galton-Watson process, with offspring distribution $N^\msr \eqindis \sum_{i=1}^{\wit D^\msr} \wit D_{(i)}^\msl$, where $\wit D_{(i)}^\msl$ are iid copies of $\wit D^\msl$ and are independent from $\wit D^\msr$. Note that $N^\msr$ has been introduced in \eqref{eq:def_Nr} to make sense of the \emph{composite generating function} $G_{N^\msr}(z) = G_{\wit D^\msr}\bigl(G_{\wit D^\msl}(z)\bigr)$. We know from branching process literature (see e.g.\ \cite{athreya-ney}) that 
the extinction probability of $\wih\BP_\msl$ is the smallest fixed point of $G_{N^\msr}$, defined as $\eta_\msl$ in \eqref{eq:fixedpoint}. Further, $\eta_\msl < 1$ exactly when $\E[N^\msr] = \E[\wit D^\msr] \E[\wit D^\msl] > 1$ (by Wald's identity), i.e., when \cref{cond:supercrit} holds. 
Each child of the root in $\BP_\msl$ survives exactly when their corresponding $\wih \BP_\msl$ survives. Using that the root of $\BP_\msl$ produces offspring distributed as $D^\msl$, the survival probability of $\BP_\msl$ is $1-G_{D^\msl}(\eta_\msl)$, defined as $\xi_\msl$ in \cref{thm:giantcomp}. By properties of $G_{D^\msl}$, $\xi_\msl > 0$ exactly when $\eta_\msl < 1$, that is, under the supercriticality condition \cref{cond:supercrit}.

\smallskip
\paragraph*{\textbf{Left-right correspondence}}
We define the analogous Galton-Watson process $\wih\BP_\msr$ (consisting of odd generations in $\BP_\msr$) with offspring distribution $N^\msl \eqindis \sum_{i=1}^{\wit D^\msl} \wit D_{(i)}^\msr$, where $\wit D_{(i)}^\msr$ are iid random variables with distribution $\wit D^\msr$ and independent from $\wit D^\msl$. This process is supercritical when $\E[N^\msl] = \E[\wit D^\msr] \E[\wit D^\msl] > 1$, where we again recognize the supercriticality condition \cref{cond:supercrit}. Thus the extinction probability of $\wih\BP_\msr$ is the smallest solution to the fixed point equation
\beq\label{eq:fixedpoint_BPr} G_{\wit D^\msl}\bigl( G_{\wit D^\msr}(z)\bigr) = z. \eeq
We now verify that the smallest solution is $\eta_\msr := G_{\wit D^\msl}(\eta_\msl)$ (defined in \cref{thm:degrees_giant}). Applying $G_{\wit D^\msl}$ to both sides of \eqref{eq:fixedpoint} implies that $\eta_\msr$ indeed satisfies \eqref{eq:fixedpoint_BPr}. Further, when \cref{cond:supercrit} holds, we have that $\eta_\msl < 1$, which implies $\eta_\msr < 1$. 
Analogously with $\xi_\msl$, $\xi_\msr = 1 - G_{D^\msr}(\eta_\msr)$ is indeed the survival probability of $\BP_\msr$, and $\xi_\msr > 0$ exactly when $\eta_\msr < 1$, that is, under the supercriticality condition \cref{cond:supercrit}. We also note that applying $G_{\wit D^\msr}$ to both sides of \eqref{eq:fixedpoint_BPr} yields $\eta_\msl = G_{\wit D^\msr}(\eta_\msr)$, showing that the roles of lhs and rhs are indeed symmetric, however the corresponding quantities are generally \emph{not equal}.

\subsection{The relation of local and global properties and degrees in the giant component}
\label{ss:local_global_rel}
Thinking of $\xi_\msl$ in \eqref{eq:leftsize} and $\xi_\msr$ in \eqref{eq:rightsize} as the probability for an $\msl$-vertex and $\msr$-vertex respectively to be in the giant, we find that the same quantities equal the probabilities of survival of the respective branching processes $\BP_\msl$ and $\BP_\msr$. Thus, we have established the ``asymptotic equivalence'', as formalized below, of the following events: a vertex being in the giant and the survival of the corresponding branching process.

For two sets $A$ and $B$, let $A \symmdiff B := (A\setminus B)\cup(B\setminus A)$ denote their symmetric difference. For $v\in\leftpar$, denote by $\comp(v)$ the connected component of $v$ in the $\RIGC$. For $K\in\Z^+$, introduce the set 
\beq \SZ_{\geq K} := \bigl\{ v\in\leftpar:\, \abs{\comp(v)} \geq K \bigr\}.
\eeq

\begin{lemma}[Relation of the $\BP$-approximation and the giant component]
\label{lem:loc_glob_relation}
Consider the \allowbreak$\RIGC(\bitd^\msl,\comvect)$ under \cref{asmp:convergence}. Then, for any $\eps>0$,
\beq\label{eq:equiv_bp_giant} \lim_{K\to\infty} \lim_{n\to\infty} \P\bigl( N_n^{-1}\, 
\abs{ \SZ_{\geq K} \symmdiff \comp_1} > \eps \bigr) = 0. \eeq
\end{lemma}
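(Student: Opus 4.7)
The proof combines the local weak convergence of the $\RIGC$ from \cite[Theorem 2.14]{vdHKomVad18locArxiv}, the asymptotic giant-component size from \Cref{thm:giantcomp}, and the identification in \Cref{ss:BPapprox} of $\xi_\msl$ as the survival probability of the approximating branching process $\BP_\msl$. The first step is to establish, for each fixed $K\in\Z^+$, the concentration
\beq\label{eq:loc_glob_conc}
N_n^{-1}\abs{\SZ_{\geq K}} \toinp \alpha_K,
\eeq
where $\alpha_K$ denotes the limit $\lim_{n\to\infty} \P(\abs{\comp(V_n^\msl)}\geq K)$ with $V_n^\msl\distr\Unif[\leftpar]$. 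Indeed, $\{\abs{\comp(v)}\geq K\}$ is a local event at $v$: it is determined by a BFS-exploration from $v$ halted once $K$ vertices are discovered or the component is exhausted. Hence local weak convergence yields $\E[N_n^{-1}\abs{\SZ_{\geq K}}] = \P(\abs{\comp(V_n^\msl)}\geq K) \to \alpha_K$, while a second-moment bound -- exploiting asymptotic independence of the (bounded-depth) BFS-explorations from two independent uniform $\msl$-vertices -- gives $\Var(N_n^{-1}\abs{\SZ_{\geq K}}) \to 0$.

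By the branching-process identification recalled in \Cref{ss:BPapprox}, the sequence $(\alpha_K)_K$ is decreasing in $K$ and its limit as $K\to\infty$ equals the survival probability of $\BP_\msl$, i.e., $\alpha_K \downarrow \xi_\msl$. I then decompose
\beq\label{eq:loc_glob_split}
\abs{\SZ_{\geq K} \symmdiff \comp_1} = \abs{\SZ_{\geq K}\setminus \comp_1} + \abs{\comp_1\setminus \SZ_{\geq K}}.
\eeq
In the supercritical case, $\abs{\comp_1}/N_n \toinp \xi_\msl > 0$ by \Cref{thm:giantcomp}, so for each fixed $K$, $\abs{\comp_1}\geq K$ whp. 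Since every $v\in\comp_1$ satisfies $\comp(v)=\comp_1$, this forces $\comp_1\subseteq\SZ_{\geq K}$ whp, so the second term in \eqref{eq:loc_glob_split} vanishes whp, and the first equals $\abs{\SZ_{\geq K}}-\abs{\comp_1}$, giving
\beq
N_n^{-1}\abs{\SZ_{\geq K}\symmdiff \comp_1} \toinp \alpha_K - \xi_\msl,
\eeq
and $\alpha_K - \xi_\msl \to 0$ as $K\to\infty$ completes this case. In the non-supercritical case $\xi_\msl=0$, we have $\abs{\comp_1}/N_n \toinp 0$ and $N_n^{-1}\abs{\SZ_{\geq K}} \toinp \alpha_K \to 0$, so a union bound on \eqref{eq:loc_glob_split} yields the conclusion.

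The main obstacle is the concentration \eqref{eq:loc_glob_conc}, which requires joint local weak convergence of the BFS-explorations from two independent uniform $\msl$-vertices. This should follow from the local weak convergence in probability established in \cite{vdHKomVad18locArxiv}, but one must verify that the two exploration processes decouple asymptotically (i.e., that whp they do not collide within the first $K$ steps); for bounded-depth explorations this is standard, using that the expected overlap is $O(1/N_n)$ per pair.
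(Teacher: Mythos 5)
Your proof is correct and follows essentially the same route as the paper's: establish $\comp_1 \subseteq \SZ_{\geq K}$ whp in the supercritical case so that the symmetric difference reduces to $\abs{\SZ_{\geq K}} - \abs{\comp_1}$, combine the concentration of $N_n^{-1}\abs{\SZ_{\geq K}}$ around $\P(\abs{\rCP}\geq K)$ with $\abs{\comp_1}/N_n\toinp\xi_\msl$, and send $K\to\infty$ using $\P(\abs{\rCP}\geq K)\to\P(\abs{\rCP}=\infty)=\xi_\msl$, with a union bound handling the non-supercritical case. The one point worth noting is that the concentration step you flag as the main obstacle is not actually an obstacle: the companion paper's local weak convergence holds \emph{in probability} (see \cite[Theorem 2.8]{vdHKomVad18locArxiv}), which already yields $N_n^{-1}\abs{\SZ_{\geq K}}\toinp\P(\abs{\rCP}\geq K)$ for each fixed $K$ directly — no separate second-moment/decoupling argument is needed, though the one you sketch is the standard way such a result would be proved if it were not already available.
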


This notion is closely related to convergence in probability. 

\begin{proof}
We first prove the lemma assuming the supercriticality condition \eqref{cond:supercrit}. Note that, for any $K$ fixed, for $n$ large enough $\comp_1 \subseteq \SZ_{\geq K}$ whp, since $\abs{\comp_1} \asymp \xi_\msl N_n + o_{\sss\P}(N_n) \geq K$ whp for $n$ large enough, since under the supercriticality condition \eqref{cond:supercrit}, $\xi_\msl>0$. Thus $\abs{\SZ_{\geq K} \symmdiff \comp_1} = \abs{\SZ_{\geq K} \setminus \comp_1} = \abs{\SZ_{\geq K}} - \abs{\comp_1}$ whp, and \eqref{eq:equiv_bp_giant} is equivalent to
\beq\label{eq:equiv_bp_giant2} \lim_{K\to\infty} \lim_{n\to\infty} \P\bigl( N_n^{-1}\, 
\abs{\SZ_{\geq K}} - N_n^{-1}\,\abs{\comp_1} > \eps \bigr) = 0. \eeq
By \cref{thm:giantcomp}, as $n\to\infty$, independently of $K$
\beq\label{eq:equiv_bp_giant3} \P\bigl( \bigl\lvert N_n^{-1}\, \abs{\comp_1} - \xi_\msl \bigr\rvert > \eps/2 \bigr) \to 0. \eeq 
We denote by $(\rCP,\groot)$, introduced in detail in \cite[Section 5.1]{vdHKomVad18locArxiv}, the local weak limit of the $\RIGC$, i.e., the random graph $\rCP$ with root $\groot$ that approximates neighborhoods in the $\RIGC$. 
By the triangle inequality, 
\begin{subequations}
\label{eq:loc_glob0}
\begin{align} 
\nonumber &\lim_{K\to\infty} \lim_{n\to\infty} \P\bigl( \bigl\lvert N_n^{-1}\, \abs{\SZ_{\geq K}} - \xi_\msl \rvert > \eps/2 \bigr) \\
\label{eq:loc_glob1} &\leq \lim_{K\to\infty} \lim_{n\to\infty} \P\Bigl( \bigl\lvert N_n^{-1}\, \abs{\SZ_{\geq K}} - \P\bigl( \abs{\rCP} \geq K \bigr) \bigr\rvert > \eps/4 \Bigr) \\
\label{eq:loc_glob2} &\phantom{{}={}}+ \lim_{K\to\infty} \lim_{n\to\infty}
\P\Bigl( \bigl\lvert \P\bigl( \abs{\rCP} \geq K \bigr) - \xi_\msl \bigr\rvert > \eps/4 \Bigr) .
\end{align}
\end{subequations}
By the local weak convergence stated in \cite[Theorem 2.8]{vdHKomVad18locArxiv}, the inner limit in \eqref{eq:loc_glob1} equals $0$ for any fixed $K$, thus \eqref{eq:loc_glob1} equals $0$. Note that the limit in $n$ in \eqref{eq:loc_glob2} can be omitted, and also note that the survival probability is $\xi_\msl = \P(\abs{\BP_\msl} = \infty)$. From the construction of $\rCP$ in \cite[Section 5.1]{vdHKomVad18locArxiv} using $\BP_\msl$, we know that $\P(\abs{\rCP} = \infty) = \P(\abs{\BP_\msl} = \infty)$. As $\P(\abs{\rCP} \geq K) \to\P(\abs{\rCP} = \infty) = \xi_\msl$, \eqref{eq:loc_glob2} equals $0$ as well. Combining this with (\ref{eq:equiv_bp_giant2}-\ref{eq:equiv_bp_giant3}) through the triangle inequality yields \eqref{eq:equiv_bp_giant}, 
concluding the proof in the case when the supercriticality condition \eqref{cond:supercrit} holds.

When \eqref{cond:supercrit} does not hold, $\xi_\msl = 0$. In this case, the first argument (that $\comp_1 \subseteq \SZ_{\geq K}$ whp) does not hold, instead we can bound
\beq N_n^{-1} \abs{ \SZ_{\geq K} \symmdiff \comp_1 } 
\leq N_n^{-1} \abs{\SZ_{\geq K}} + N_n^{-1} \abs{\comp_1}. \eeq
The rest of the argument is analogous. By \cref{thm:giantcomp}, $\abs{\comp_1}/N_n \toinp \xi_\msl = 0$, and $\bigl\lvert N_n^{-1} \abs{\SZ_{\geq K}} - \xi_\msl \bigr\rvert = N_n^{-1} \abs{\SZ_{\geq K}}$ can still be bound by (\ref{eq:loc_glob1}-\ref{eq:loc_glob2}). We conclude that \eqref{eq:equiv_bp_giant} holds in this case as well. 
This concludes the proof of \cref{lem:loc_glob_relation}.
\end{proof}

The above equivalence can be extended to give a heuristic interpretation for further results, namely, the formulas in \cref{thm:BCM_giantcomp,cor:BCM_giantcomp}. Knowing the extinction probability of a child of the root and the degree of the root results in \eqref{eq:deg_k} and \eqref{eq:deg_k_right}. For \eqref{eq:edges}, note that choosing an (instance of an) edge uar \emph{in the random graph} is equivalent to picking the comprising $\msl$-and $\msr$-half-edges uar independently. Then the two endpoints can be viewed as a child of the root in $\BP_\msl$ and $\BP_\msr$, respectively, and at least one of them has to survive, which has probability $1 - \eta_\msl \eta_\msr$. 
In the following, we prove our results on degrees and edges 
in the giant as a consequence of local weak convergence and \cref{thm:giantcomp}.

\begin{proof}[Proof of \cref{thm:degrees_giant}]
As we recalled above from \cite[Section 5.1]{vdHKomVad18locArxiv}, we denote by $(\rCP,\groot)$ the local weak limit of the $\RIGC$, i.e., the random graph $\rCP$ with root $\groot$ that approximates neighborhoods in the $\RIGC$. Denote by $\deg(\groot)$ the degree of $\groot$ in $\rCP$, and by $\bdeg(\groot)$ the number of communities that $\groot$ is part of in $\rCP$. 
Let $V_n^\msl \distr \Unif[\leftpar]$ and note that
\beq N_n^{-1}\, \abs{\leftpar_k \cap \vertices_d^\msp \cap \comp_1} = \P\bigl( \pdeg(V_n^\msl)=d,\, \ldeg(V_n^\msl)=k,\, V_n^\msl \in \comp_1 \bigr). \eeq
Intuitively, as $(\rCP,\groot)$ approximates $(\RIGC,V_n^\msl)$, the limit of the above quantity must be $\P\bigl( \bdeg(\groot)=k,{\deg(\groot)}=d,\lvert{\vertices(\rCP)}\rvert = \infty \bigr)$. In the following, we prove this formally. 
Recall \eqref{eq:def_deggiant_distr} and that $\nu(c \countin H)$ denotes the number of vertices in $H$ with $\msc$-degree $c$. From the construction of $(\rCP,0)$ in \cite[Section 5.1]{vdHKomVad18locArxiv}, it is straightforward to see (by conditioning on the community graphs $H_1,\ldots,H_k$ assigned to the $k$ communities that the root $\groot$ is part of) that 
\beq\label{eq:Akd_expand} \bal
&\P\bigl( \bdeg(\groot)=k,{\deg(\groot)}=d,\lvert{\vertices(\rCP)}\rvert = \infty \bigr) \\
&= p_k 
\sum_{H_1,\ldots,H_k\in\comgraphs} \sum_{\substack{c_1,\ldots,c_k\in\Z^+\\ c_1+\ldots+c_k=d}} 
\Bigl( 1 - \eta_\msr^{ \sum_{i=1}^k (\abs{H_i}-1)} \Bigr)
\prod_{i=1}^k \frac{\nu(c_i \countin H_i) \mu_{H_i}}{\E[D^\msr]}
= A(k,d). \eal \eeq
For convenience, denote, with $K\in\Z^+$,
\beq\label{eq:def_AkdK} 
A_K(k,d) := \P\bigl( \bdeg(\groot)=k,{\deg(\groot)}=d,\lvert{\vertices(\rCP)}\rvert \geq K \bigr). \eeq
Keep in mind that our goal is to prove \eqref{eq:deggiant_distr}, i.e., that $N_n^{-1} \abs{\leftpar_k\cap\vertices_d^\msp\cap\comp_1} \toinp A(k,d)$. 
We compute
\begin{subequations}
\begin{align} 
\nonumber &\lim_{n\to\infty} \P\Bigl( \bigl\lvert N_n^{-1}\, \abs{\leftpar_k \cap \vertices_d^\msp \cap \comp_1} 
- A(k,d) \bigr\rvert > \eps \Bigr) \\
\label{eq:loc_glob3} &\leq \lim_{K\to\infty} \lim_{n\to\infty} \P\Bigl( \bigl\lvert N_n^{-1}\, \abs{\leftpar_k \cap \vertices_d^\msp \cap \comp_1} 
- N_n^{-1}\, \abs{\leftpar_k \cap \vertices_d^\msp \cap \SZ_{\geq K}}  \bigr\rvert > \eps/3 \Bigr) \\
\label{eq:loc_glob4} &\phantom{{}\leq{}}+ \lim_{K\to\infty} \lim_{n\to\infty} \P\Bigl( \bigl\lvert N_n^{-1}\, \abs{\leftpar_k \cap \vertices_d^\msp \cap \SZ_{\geq K}} 
- A_K(k,d) \bigr\rvert > \eps/3 \Bigr) \\
\label{eq:loc_glob5} &\phantom{{}\leq{}}+ \lim_{K\to\infty} \lim_{n\to\infty} \P\Bigl( \bigl\lvert A_K(k,d) - A(k,d) \bigr\rvert > \eps/3 \Bigr). 
\end{align}
\end{subequations}
Clearly, $A_K(k,d) \to A(k,d)$ as $K\to\infty$ (compare \eqref{eq:Akd_expand} and \eqref{eq:def_AkdK}), thus the double limit in \eqref{eq:loc_glob5} equals $0$. Next, we look at \eqref{eq:loc_glob4} and note that $N_n^{-1}\, \abs{\leftpar_k \cap \vertices_d^\msp \cap \SZ_{\geq K}} = \P\bigl( \pdeg(V_n^\msl)=d,\, \ldeg(V_n^\msl)=k,\, V_n^\msl \in \SZ_{\geq K} \bigr)$. Thus for $K$ fixed, by \eqref{eq:def_AkdK} and the local weak convergence stated in \cite[Theorem 2.8]{vdHKomVad18locArxiv}, the inner limit in \eqref{eq:loc_glob4} is $0$, thus \eqref{eq:loc_glob4} equals $0$. 
Removing some of the conditions, we can bound \eqref{eq:loc_glob3} as
\beq \bal
&\P\Bigl( \bigl\lvert N_n^{-1}\, \lvert \leftpar_k \cap \vertices_d^\msp \cap \comp_1 \rvert
- N_n^{-1}\, \lvert \leftpar_k \cap \vertices_d^\msp \cap {\SZ_{\geq K}} \rvert \bigr\rvert > \eps/3 \Bigr) \\
&\leq {\P\Bigl( N_n^{-1} \lvert \leftpar_k \cap \vertices_d^\msp \cap (\comp_1 \symmdiff \SZ_{\geq K}) \rvert \geq \eps/3 \Bigr)} \\
&\leq \P\Bigl( {N_n^{-1}}\, {\abs{\comp_1 \symmdiff \SZ_{\geq K}}} > \eps/3 \Bigr), \eal \eeq
which tends to $0$ as first $n\to\infty$ followed by $K\to\infty$, by \cref{lem:loc_glob_relation}. Thus \eqref{eq:loc_glob3} is also $0$, and combining everything\hyphenation{ev-ery-thing} above, indeed $N_n^{-1}\,\abs{\leftpar_k\cap\vertices_d^\msp\cap\comp_1} \toinp A(k,d)$, that is, \eqref{eq:deggiant_distr} holds. This concludes the proof of \cref{thm:degrees_giant}.
\end{proof}

\appendix
\section{Edges in the giant component of the {\rm RIGC}}
\label{apdx:edges}

In this section, we provide the details for the sketch proof of \cref{thm:edgesRIGC} from \cref{ss:results_giant_RIGC}, by proving both \cref{lem:UI_equiv} and the remaining statements \eqref{eq:edgesconv_Akd} and \eqref{eq:equiv_edgesformulas}. 

Recall $D_n^\msc$ with pmf $\nn{\bm\varrho}$ from \eqref{eq:def_rho} and its limit $D^\msc$ with pmf $\bm\varrho$ from \cref{rem:asmp_consequences} \cref{cond:limit_cdeg}. Denote a uniform $\msl$-vertex $V_n^\msl \distr \Unif[\leftpar]$ and its (projected) degree (see \eqref{eq:def_deg}) $D_n^\msp = \pdeg(V_n^\msl)$. In \cite[Corollary 2.9]{vdHKomVad18locArxiv}, the distributional limit of $D_n^\msp$ is established as $D^\msp \eqindis \sum_{i=1}^{D^\msl} D_{(i)}^\msc$, where $D_{(i)}^\msc$ are iid copies of $D^\msc$ independently of $D^\msl$. Let $\rg_n$ and $\rg$ denote random graphs with pmfs $\nn{\bm\mu}$ from \eqref{eq:def_mu} and $\bm\mu$ from \cref{asmp:convergence} \cref{cond:limit_com}, respectively.

\subsection{Proof of Claim \ref{lem:UI_equiv}}
\label{apxs:UI}

We prove \cref{lem:UI_equiv} by showing that statement \eqref{UI:comdeg} is equivalent to both statement \eqref{UI:deg} and statement \eqref{UI:edges}.

\begin{proof}[Proof of \cref{lem:UI_equiv} \eqref{UI:comdeg} implies \eqref{UI:deg}]
We prove that $(D_n^\msp)_{n\in\N}$ is uniformly integrable by showing that for any $\eps > 0$, $\E[D_n^\msp \ind_{\{D_n^\msp \geq K\}}] < \eps$ for $K$ large enough, uniformly in $n$. Recall that $\vertices(\comvect)$ denotes the disjoint union of all vertices in community graphs and that for $j\in\vertices(H)$, $d_j^\msc$ denotes the degree of $j$ within $H$. Recall that $v \comrole j$ denotes the event that $j\in\vertices(\comvect)$ is one of the community roles assigned to $v\in\leftpar$, so that we can write the \emph{random} degree of $v$ (see \eqref{eq:def_deg}), depending on the bipartite matching, as $d_v^\msp = \sum_{j\in\vertices(\comvect)} d_j^\msc \ind_{\{v \comrole j\}}$. We compute, for $K\in\N$, by taking the empirical average first,
\beq \bal \E\bigl[ D_n^\msp \ind_{\{D_n^\msp \geq K\}} \bigr] 
&= \E\Bigl[ N_n^{-1} \sum_{v\in [N_n]} 
\sum_{j\in\vertices(\comvect)} d_j^\msc \cdot 
\ind_{\{v \comrole j\}} \ind_{\{d_v^\msp \geq K\}} \Bigr] \\
&= N_n^{-1} \sum_{j\in\vertices(\comvect)} d_j^\msc \cdot
\E\Bigl[ \sum_{v\in [N_n]} \ind_{\{v \comrole j\}} \ind_{\{d_v^\msp \geq K\}} \Bigr] \\
&= N_n^{-1} \sum_{j\in\vertices(\comvect)} d_j^\msc \sum_{v\in [N_n]} \P\bigl( v \comrole j, d_v^\msp \geq K \bigr).
\eal \eeq
We further rewrite the probability as
\beq \P\bigl( v \comrole j, d_v^\msp \geq K \bigr) 
= \P\bigl( d_v^\msp \geq K \bcond v \comrole j \bigr) \cdot \P(v \comrole j). \eeq
Note that $\P( v \comrole j ) = d_v^\msl / \he_n$. We now split the sum over $j$ according to whether $d_j^\msc$ is smaller or larger than $\sqrt{K}$. For $d_j^\msc \geq \sqrt{K}$, we use the trivial bound $\P\bigl( d_v^\msp \geq K \bcond v \comrole j \bigr) \leq 1$.
\begin{subequations}
\label{eq:UIproof1}
\begin{align}
\label{eq:UIproof1a} \E\bigl[ D_n^\msp \ind_{\{D_n^\msp \geq K\}} \bigr] 
&\leq N_n^{-1} \sum_{j\in\vertices(\comvect)} d_j^\msc \ind_{\{d_j^\msc \geq \sqrt{K}\}} \sum_{v\in [N_n]} \frac{d_v^\msl}{\he_n} \\
\label{eq:UIproof1b} &+ N_n^{-1} \sum_{j\in\vertices(\comvect)} d_j^\msc \ind_{\{d_j^\msc < \sqrt{K}\}} 
\sum_{v\in [N_n]} \P\bigl( d_v^\msp \geq K \bcond v \comrole j \bigr) \cdot \frac{d_v^\msl}{\he_n}. 
\end{align}
\end{subequations}
The term corresponding to large values of $d_j^\msc$, i.e., the rhs of \eqref{eq:UIproof1a} equals
\beq N_n^{-1} \sum_{v\in{N_n}} d_v^\msl \frac{1}{\he_n} \sum_{j\in\vertices(\comvect)} d_j^\msc \ind_{\{d_j^\msc \geq \sqrt{K}\}} 
= \E[D_n^\msl] \cdot \E[D_n^\msc \ind_{\{D_n^\msc \geq \sqrt{K}\}}], \eeq
where $\E[D_n^\msl]$ is bounded due to \cref{asmp:convergence} \cref{cond:mean_ldeg} and since $(D_n^\msc)_{n\in\N}$ is UI by assumption, $\E[D_n^\msc \ind_{\{D_n^\msc \geq \sqrt{K}\}}]$ can be made arbitrarily small uniformly in $n$ by choosing $K$ large enough. 
In the term corresponding to small values of $d_j^\msc$, i.e., \eqref{eq:UIproof1b}, we further analyze $\P\bigl( d_v^\msp \geq K \bcond v \comrole j \bigr)$. Note that conditionally on $v \comrole j$, $d_v^\msp = d_j^\msc + d_{J_2}^\msc + \ldots + d_{J_{\ldeg(v)}}^\msc$, where $J_i$ is chosen uar from $\vertices(\comvect) \setminus \{j,J_2,\ldots,J_{i-1}\}$. Using that $d_j^\msc < \sqrt{K}$,
\beq\bal \P\bigl( d_v^\msp \geq K \bcond v \comrole j \bigr)
&= \P\bigl( d_v^\msp - d_j^\msc = d_{J_2}^\msc + \ldots d_{J_{\ldeg(v)}}^\msc \geq K - d_j^\msc \bigr) \\
&\leq \P\bigl( d_{J_2}^\msc + \ldots + d_{J_{\ldeg(v)}}^\msc \geq K - \sqrt{K} \bigr) \\
&\leq \frac{\E\bigl[d_{J_2}^\msc\bigr] + \ldots + \E\bigl[d_{J_{\ldeg(v)}}^\msc\bigr]}{K-\sqrt{K}}, \eal \eeq
by Markov's inequality. For any fixed $v$, $\ldeg(v)$ is constant, thus the depletion of community vertices becomes negligible as $n\to\infty$, and $\E\bigl[d_{J_i}^\msc\bigr] = \E[D_n^\msc] (1+o(1))$ for all $i$. Substituting, 
and using that $d_j^\msc \ind_{\{d_j^\msc < \sqrt{K}\}} \leq \sqrt{K} \ind_{\{d_j^\msc < \sqrt{K}\}}  \leq \sqrt{K}$, 
we can bound \eqref{eq:UIproof1b} by
\beq\bal &N_n^{-1} \sum_{j\in\vertices(\comvect)} d_j^\msc \ind_{\{d_j^\msc < \sqrt{K}\}} 
\sum_{v\in [N_n]} \frac{\E[D_n^\msl - 1] \cdot \E[D_n^\msc] (1+o(1))}{K-\sqrt{K}} \cdot \frac{d_v^\msl}{\he_n} \\
&= \E[D_n^\msl - 1] \E[D_n^\msc] (1+o(1)) \frac{1}{N_n} \sum_{v\in [N_n]} d_v^\msl \frac{1}{\he_n} \sum_{j\in\vertices(\comvect)} d_j^\msc \ind_{\{d_j^\msc < \sqrt{K}\}} \frac{1}{K-\sqrt{K}} \\
&\leq \E[D_n^\msl - 1] \E[D_n^\msc] (1+o(1)) \E[D_n^\msl] \frac{\he_n \sqrt{K}}{\he_n (K-\sqrt{K})},
\eal\eeq
where $\E[D_n^\msl - 1] \E[D_n^\msc] (1+o(1)) \E[D_n^\msl]$ is bounded due to \cref{asmp:convergence} \cref{cond:mean_ldeg} and our assumption that $(D_n^\msc)_{n\in\N}$ is UI, and $\sqrt{K}/(K-\sqrt{K})$ can be made arbitrarily small for $K$ large enough. We conclude that by choosing $K$ sufficiently large, $\E\bigl[D_n^\msp \ind_{\{D_n^\msp \geq K\}} \bigr]$ can be made arbitrarily small, uniformly in $n$, that is, $(D_n^\msp)_{n\in\N}$ is uniformly integrable.
\end{proof}

\begin{proof}[Proof of \cref{lem:UI_equiv} \eqref{UI:deg} implies \eqref{UI:comdeg}]
Recall that for $v\in\leftpar$ and $j\in\vertices(\comvect)$, $v \comrole j$ denotes the event that $j$ is one of the community roles assigned to $v$. Note that each $j$ is assigned to a \emph{unique} $v$, thus $\sum_{v\in[N_n]} \ind_{\{v \comrole j\}} = 1$ (almost surely). We calculate, for some $K\in\N$,
\beq\bal \E\bigl[D_n^\msc \ind_{\{ D_n^\msc \geq K \}} \bigr] 
&= \frac{1}{\he_n} \sum_{j\in\vertices(\comvect)} d_j^\msc \ind_{\{ d_j^\msc \geq K \}} \\
&= \frac{1}{\he_n} \sum_{j\in\vertices(\comvect)} d_j^\msc \ind_{\{ d_j^\msc \geq K \}} \E\Bigl[ \sum_{v\in[N_n]} \ind_{\{v \comrole j\}} \Bigr] \\
&= \E\Bigl[ \frac{1}{\he_n} \sum_{v\in[N_n]} \sum_{j\in\vertices(\comvect)} d_j^\msc \ind_{\{v \comrole j\}} \ind_{\{ d_j^\msc \geq K \}} \Bigr].
\eal\eeq
We recognize the $\msp$-degree (see \eqref{eq:def_deg}) of $v$ written as $d_v^\msp = \sum_{j\in\vertices(\comvect)} d_j^\msc \ind_{\{v \comrole j\}}$, and note that on the event $v \comrole j$, $d_j^\msc \geq K$ implies that $d_v^\msp \geq K$. Thus,
\beq\bal &\E\bigl[ D_n^\msc \ind_{\{ D_n^\msc \geq K \}} \bigr] 
\leq \E\Bigl[ \frac{1}{\he_n} \sum_{v\in[N_n]} \sum_{j\in\vertices(\comvect)} d_j^\msc \ind_{\{v \comrole j\}} \ind_{\{ d_v^\msp \geq K \}} \Bigr] \\
&= \E\Bigl[ \frac{1}{\E[D_n^\msl] N_n} \sum_{v\in[N_n]} d_v^\msp \ind_{\{ d_v^\msp \geq K \}} \Bigr] 
= \frac{1}{\E[D_n^\msl]} \E\bigl[ D_n^\msp \ind_{\{ D_n^\msp \geq K \}} \bigr]. \eal\eeq
This can be made arbitrarily small, uniformly in $n$, by choosing $K$ large enough, since $\E[D_n^\msl]$ is bounded due to \cref{asmp:convergence} \cref{cond:mean_ldeg} (and bounded away from $0$), and $(D_n^\msp)_{n\in\N}$ is UI by assumption. This implies that $(D_n^\msc)_{n\in\N}$ is also UI.
\end{proof}

\begin{proof}[Proof of \cref{lem:UI_equiv} \eqref{UI:comdeg} implies \eqref{UI:edges}]
Note that with an $\msr$-vertex $V_n^\msr \distr \Unif[M_n]$ chosen uar, $\com_{V_n^\msr} \eqindis \rg_n$. Thus, for $K\in\N$, we compute
\beq \E\bigl[ \abs{\edges(\rg_n)} \ind_{\{ \abs{\edges(\rg_n)} \geq K \}} \bigr] 
= \frac{1}{M_n} \sum_{a\in[M_n]} \abs{\edges(\com_a)} \cdot \ind_{\{ \abs{\edges(\com_a)} \geq K \}}.
\eeq
For any graph $H$, $\abs{\edges(H)} = \frac12 \sum_{v\in\vertices(H)} \deg(v)$. Noting that for any graph $H$, $\abs{\edges(H)} \leq \abs{H} (\abs{H}-1)/2$, we have that $\abs{\edges(H)}\geq K$ implies $\abs{H} \geq \sqrt{2K}$, thus
\beq\begin{split} \label{eq:UIproof2}
&\E\bigl[ \abs{\edges(\rg_n)} \ind_{\{ \abs{\edges(\rg_n)} \geq K \}} \bigr] 
= \frac{1}{M_n} \sum_{a\in[M_n]} \frac12 \sum_{j \in \vertices(\com_a)} d_j^\msc \cdot \ind_{\{ \abs{\edges(\com_a)} \geq K \}} \\
&\leq \frac{1}{2 M_n} \sum_{a\in[M_n]} \sum_{j \in \vertices(\com_a)} d_j^\msc \cdot \ind_{\{ \abs{\com_a} \geq \sqrt{2K} \}} 
= \frac{1}{2 M_n} \sum_{j\in\vertices(\comvect)} d_j^\msc \cdot \ind_{\{ \abs{\com_{a(j)}} \geq \sqrt{2K} \}}, 
\end{split}\eeq
where $a(j)$ is the community that $j$ is a vertex in. To estimate \eqref{eq:UIproof2}, we first compute the number of terms, that is, the number of vertices in large enough communities, using that $\vertices(\comvect) = \cup_{a\in[M_n]} \vertices(\com_a)$:
\beq \bal \kappa_n :&= \sum_{j\in\vertices(\comvect)} \ind_{\{ \abs{\com_{a(j)}} \geq \sqrt{2K} \}} 
= \sum_{a\in[M_n]} \abs{\com_a} \cdot \ind_{\{ \abs{\com_a} \geq \sqrt{2K} \}} \\
&= M_n \E\bigl[ D_n^\msr \ind_{\{ D_n^\msr \geq \sqrt{2K} \}} \bigr]. \eal\eeq
By \cref{asmp:convergence} \cref{cond:mean_rdeg}, $(D_n^\msr)_{n\in\N}$ is UI, thus $\E\bigl[ D_n^\msr \ind_{\{ D_n^\msr \geq \sqrt{2K} \}} \bigr]$ can be made arbitrarily small uniformly in $n$ by choosing $K$ large enough. That is, we can make $\kappa_n \leq \delta' M_n \leq \delta \he_n$, since $\he_n/M_n = \E[D_n^\msr]$ is bounded. 
Let $d_{(i)}^\msc$ denote the $i$th largest element in the sequence $(d_j^\msc)_{j\in\vertices(\comvect)}$. 
Since there are at most $\delta \he_n$ community roles in communities larger than $\sqrt{2K}$, we can bound the sum of their $\msc$-degrees on the rhs of \eqref{eq:UIproof2} by taking (at least\footnote{If there are several terms equal to the one with rank $\delta\he_n$, we may include more terms.}) $\delta \he_n$ of the largest $\msc$-degrees:
\beq \E\bigl[ \abs{\edges(\rg_n)} \ind_{\{ \abs{\edges(\rg_n)} \geq K \}} \bigr] 
\leq \frac{1}{2 M_n} \sum_{j \in \vertices(\comvect)} d_j^\msc \cdot \ind_{\{d_j^\msc \geq d_{(\delta \he_n)}^\msc\}}. \eeq
Note that $d_{(\delta \he_n)}^\msc$ the upper-$\delta$-quantile $Q_n^\delta$ of the empirical distribution $D_n^\msc$, thus it must converge to the upper-$\delta$-quantile $Q^\delta$ of the distribution $D^\msc$. Consequently, there exists $n_0$ such that for $n \geq n_0$, $Q_n^\delta \geq Q^{\delta} - \eps =: K'$, and
\beq\bal \E\bigl[ \abs{\edges(\rg_n)} \ind_{\{ \abs{\edges(\rg_n)} \geq K \}} \bigr] 
&\leq \frac{\E[D_n^\msr]}{2 \he_n} \sum_{j\in\vertices(\comvect)} d_j^\msc \ind_{\{ d_j^\msc \geq Q^\delta - \eps \}} \\
&= \frac12\E[D_n^\msr] \E\bigl[ D_n^\msc \ind_{\{ D_n^\msc \geq K' \}}\bigr]. \eal\eeq
Since $\E[D_n^\msr]$ is bounded due to \cref{asmp:convergence} \cref{cond:mean_rdeg} and $(D_n^\msc)_{n\in\N}$ is UI, the upper bound can be made arbitrarily small for $n \geq n_0$ by choosing $K'$ sufficiently large. This is possible by choosing the earlier $K$ sufficiently large so that $\delta$ is sufficiently small. Further, since $n < n_0$ is a finite set, we can increase $K$ so that $\max_{n < n_0} \E\bigl[ \abs{\edges(\rg_n)} \ind_{\{ \abs{\edges(\rg_n)} \geq K \}} \bigr]$ is sufficiently small as well. We conclude that indeed, $(\abs{\edges(\rg_n)})_{n\in\N}$ is UI.
\end{proof}

\begin{proof}[Proof of \cref{lem:UI_equiv} \eqref{UI:edges} implies \eqref{UI:comdeg}]
Recall that $\vertices(\comvect)$ denotes the union of vertices in all $\com_a \in \comvect$. We calculate, for some $K\in\N$,
\beq \E\bigl[D_n^\msc \ind_{\{ D_n^\msc \geq K \}} \bigr] 
= \frac{1}{\he_n} \sum_{j\in\vertices(\comvect)} d_j^\msc \ind_{\{ d_j^\msc \geq K \}} 
= \frac{1}{\he_n} \sum_{a\in[M_n]} \sum_{j\in\vertices(\com_a)} d_j^\msc \ind_{\{ d_j^\msc \geq K \}}. \eeq
Note that $\abs{\edges(\com_a)} = 2 \sum_{j\in\vertices(\com_a)} d_j^\msc$, and further, that for $j\in\vertices(\com_a)$, $d_j^\msc \geq K$ implies $\abs{\edges(\com_a)} \geq K$. Thus
\beq\bal &\E\bigl[D_n^\msc \ind_{\{ D_n^\msc \geq K \}} \bigr] 
\leq \frac{1}{\he_n} \sum_{a\in[M_n]} \sum_{j\in\vertices(\com_a)} d_j^\msc \cdot \ind_{\{ \abs{\edges(\com_a)} \geq K \}} \\
&= \frac{1}{\E[D_n^\msr] M_n} \sum_{a\in[M_n]} 2 {\abs{\edges(\com_a)}} \cdot \ind_{\{ {\abs{\edges(\com_a)}} \geq K \}} 
= \frac{2}{\E[D_n^\msr]} \E\bigl[ {\abs{\edges(\rg_n)}} \ind_{\{ {\abs{\edges(\rg_n)}} \geq K \}} \bigr]. \eal\eeq
This can be made arbitrarily small, uniformly in $n$, by choosing $K$ large enough, since $\E[D_n^\msr]$ is bounded due to \cref{asmp:convergence} \cref{cond:mean_rdeg} (and bounded away from $0$), and $\abs{\edges(\rg_n)}$ is UI. This implies that $(D_n^\msc)_{n\in\N}$ is also UI.
\end{proof}

\subsection{Proof of Theorem \ref{thm:edgesRIGC}} 
\label{apxs:edges_giant}
Recall from the sketch proof that in order to complete the proof of \cref{thm:edgesRIGC}, we need to prove \eqref{eq:edgesconv_Akd} and \eqref{eq:equiv_edgesformulas}, that we complete below one by one.

\begin{proof}[Proof of \eqref{eq:edgesconv_Akd}]
Recall \eqref{eq:def_deg} and $D_n^\msp = \pdeg(V_n^\msl)$, with $V_n^\msl\sim\Unif[N_n]$. Note that $D_n^\msp$ has double randomness: the choice of $V_n^\msl$ and the bipartite matching $\bmatch_n$ that determines $\pdeg(v)$ (see \eqref{eq:def_deg}) for each $\msl$-vertex $v$. 
We rewrite 
\beq \frac{\abs{\edges(\comp_1)}}{N_n }
= \frac12 \E\bigl[ \pdeg(V_n^\msl) \ind_{\{V_n^\msl\in\comp_1\}} \bcond \bmatch_n \bigr] 
= \frac{1}{2N_n} \sum_{v\in[N_n]} \pdeg(v) \ind_{\{v\in\comp_1\}}. \eeq
Under the condition \cref{cond:UI_Dc}, by \cref{lem:UI_equiv}, we have that $(D_n^\msp)_{n\in\N}$ is UI. Noting that \allowbreak $\pdeg(V_n^\msl) \ind_{\{V_n^\msl\in\comp_1\}} \cond \bmatch_n \leq \pdeg(V_n^\msl) \cond \bmatch_n$, 
\beq \E\bigl[ \pdeg(V_n^\msl) \ind_{\{V_n^\msl\in\comp_1\}} \ind_{\{\pdeg(V_n^\msl) > K\}} \bcond \bmatch_n \bigr] 
\leq \E\bigl[ \pdeg(V_n^\msl) \ind_{\{\pdeg(V_n^\msl) > K\}} \bcond \bmatch_n \bigr],  \eeq
which implies that also $(\pdeg(V_n^\msl) \ind_{\{V_n^\msl\in\comp_1\}})_{n\in\N}$ is UI. Recall that by \eqref{eq:deggiant_distr}, the empirical mass function of $\pdeg(V_n^\msl) \ind_{\{V_n^\msl\in\comp_1\}}$ converges in probability, which combined with the uniform integrability implies the convergence of the empirical average to the mean of the limit: 
\beq \bal \frac{\abs{\edges(\comp_1)}}{N_n}
&= \frac12 \E\bigl[ \pdeg(V_n^\msl) \ind_{\{V_n^\msl \in \comp_1\}} \bcond \bmatch_n \bigr] \\
&= \frac12 \sum_{d\in\N} d\cdot \P\bigl( \pdeg(V_n^\msl) = d,\, V_n^\msl \in \comp_1 \bcond \bmatch_n \bigr) 
= \frac12 \sum_{d\in\N} d\cdot \frac{\abs{\vertices_d^\msp \cap \comp_1}}{N_n} \\
&= \frac12 \sum_{d\in\N} d\cdot \sum_{k\in\Z^+} \frac{\abs{\vertices_k^\msl \cap \vertices_d^\msp \cap \comp_1}}{N_n} 
\toinp \frac12 \sum_{d\in\N} \sum_{k\in\Z^+} d\cdot A(k,d), \eal \eeq
by \eqref{eq:deggiant_distr}. This concludes the proof of \eqref{eq:edgesconv_Akd}.
\end{proof}

\begin{proof}[Proof of \eqref{eq:equiv_edgesformulas}]
Recall that $\rg$ denotes a random graph with pmf $\bm\mu$. Note that under condition \cref{cond:UI_Dc}, by \cref{lem:UI_equiv}, $\E\bigl[ \abs{\edges(\rg)} (1-\eta_\msr^{\abs{\rg}-1}) \bigr] \leq \E\bigl[ \abs{\edges(\rg)} \bigr] <\infty$. 
Substituting \eqref{eq:def_deggiant_distr}, and noting that under the condition $d=c_1+\ldots+c_k$, we can replace $d$ by this sum, we compute
\begin{subequations}
\begin{align}
\nonumber &\sum_{d\in\N} \sum_{k\in\Z^+} d \cdot A(k,d) \\
\nonumber &= \sum_{d\in\N} \sum_{k\in\Z^+} p_k 
\sum_{H_1,\ldots,H_k\in\comgraphs} \sum_{\substack{c_1,\ldots,c_k\in\N\\ c_1+\ldots+c_k=d}} 
\Bigl( \sum_{j=1}^k c_j \Bigr)
\Bigl( 1 - \eta_\msr^{ \sum_{i=1}^k (\abs{H_i}-1)} \Bigr)
\prod_{i=1}^k \frac{\nu(c_i \countin H_i) \mu_{H_i}}{\E[D^\msr]} \\
\label{eq:loc_glob_edges1} &= \sum_{k\in\Z^+} p_k 
\sum_{H_1,\ldots,H_k\in\comgraphs} \sum_{c_1,\ldots,c_k\in\N} 
\sum_{j=1}^k c_j 
\prod_{i=1}^k \frac{\nu(c_i \countin H_i) \mu_{H_i}}{\E[D^\msr]} \\
\label{eq:loc_glob_edges2} &\phantom{{}={}}- \sum_{k\in\Z^+} p_k 
\sum_{H_1,\ldots,H_k\in\comgraphs} \sum_{c_1,\ldots,c_k\in\N} 
\sum_{j=1}^k c_j 
\cdot \eta_\msr^{ \sum_{i=1}^k (\abs{H_i}-1)} 
\prod_{i=1}^k \frac{\nu(c_i \countin H_i) \mu_{H_i}}{\E[D^\msr]},
\end{align}
\end{subequations}
where we have used that summing over $d$ removes the restriction $c_1+\ldots+c_k = d$. In the following, we simplify \eqref{eq:loc_glob_edges1} and \eqref{eq:loc_glob_edges2} separately, starting with \eqref{eq:loc_glob_edges1}.

For fixed $k$ and $H_1,\ldots,H_k\in\comgraphs$, we compute
\beq\label{eq:loc_glob_edges3} \bal 
&\sum_{c_1,\ldots,c_k\in\N} 
\sum_{j=1}^k c_j 
\prod_{i=1}^k \frac{\nu(c_i \countin H_i) \mu_{H_i}}{\E[D^\msr]} 
= \! \sum_{c_1,\ldots,c_k\in\N} 
\sum_{j=1}^k \frac{c_j \cdot \nu(c_j \countin H_j) \mu_{H_j}}{\E[D^\msr]} 
\prod_{\substack{i\in[k]\\i\neq j}} \frac{\nu(c_i \countin H_i) \mu_{H_i}}{\E[D^\msr]} \\
&= \sum_{j=1}^k \frac{\sum_{c_j\in\N} c_j \cdot \nu(c_j \countin H_j) \mu_{H_j}}{\E[D^\msr]} 
\prod_{\substack{i\in[k]\\i\neq j}} \frac{\sum_{c_i\in\N} \nu(c_i \countin H_i) \mu_{H_i}}{\E[D^\msr]}. \eal \eeq
Recall that $\nu(c_i \countin H_i)$ counts the number of $v\in\vertices(H_i)$ with degree $c_i$. Thus
\beq\label{eq:loc_glob_edges4} \sum_{c_i\in\N} \nu(c_i \countin H_i) = \abs{H_i},
\qquad \sum_{c_j\in\N} c_j \cdot \nu(c_j \countin H_j) 
= \sum_{v\in\vertices(H_j)} \deg(v)
= 2 \abs{\edges(H_j)}. \eeq
Again, denote by $\rg$ a random graph with pmf $\bm\mu$. Combining (\ref{eq:loc_glob_edges3}-\ref{eq:loc_glob_edges4}) and substituting, \eqref{eq:loc_glob_edges1} equals
\beq \bal &\sum_{k\in\Z^+} p_k \sum_{H_1,\ldots,H_k\in\comgraphs} 
\sum_{j=1}^k \frac{2\abs{\edges(H_j)} \mu_{H_j}}{\E[D^\msr]} 
\prod_{\substack{i\in[k]\\i\neq j}} \frac{\abs{H_i} \mu_{H_i}}{\E[D^\msr]} \\
&= \sum_{k\in\Z^+} p_k \sum_{j=1}^k \frac{\sum_{H_j\in\comgraphs} 2\abs{\edges(H_j)} \mu_{H_j}}{\E[D^\msr]} 
\prod_{\substack{i\in[k]\\i\neq j}} \frac{\sum_{H_i\in\comgraphs} \abs{H_i} \mu_{H_i}}{\E[D^\msr]} \\
&= \sum_{k\in\Z^+} p_k \sum_{j=1}^k \frac{2\E\bigl[ \abs{\edges(\rg)} \bigr]}{\E[D^\msr]} 
\prod_{\substack{i\in[k]\\i\neq j}} \frac{\E\bigl[ \abs{\rg} \bigr]}{\E[D^\msr]} 
= \sum_{k\in\Z^+} p_k \cdot k \cdot \frac{2\E\bigl[ \abs{\edges(\rg)} \bigr]}{\E[D^\msr]},
\eal \eeq
since $\E\bigl[ \abs{\rg} \bigr] / \E[D^\msr] = 1$, so the product over $i$ disappears, which yields $k$ identical terms in the sum over 
$j$. Note that $\sum_{k\in\Z^+} p_k \cdot k = \E[D^\msl]$ and recall from \cref{rem:asmp_consequences} \cref{cond:partition_ratio} that $\E[D^\msl]/\E[D^\msr] = \gamma$. We conclude that \eqref{eq:loc_glob_edges1} equals
\beq\label{eq:loc_glob_edges5} \frac{2\E\bigl[ \abs{\edges(\rg)} \bigr]}{\E[D^\msr]} \E[D^\msl] 
= 2 \gamma\, \E\bigl[ \abs{\edges(\rg)} \bigr]. \eeq
Next, we study \eqref{eq:loc_glob_edges2}. Using (\ref{eq:loc_glob_edges3}-\ref{eq:loc_glob_edges4}) and substituting,
\beq\label{eq:loc_glob_edges6} \bal &
\sum_{k\in\Z^+} p_k \sum_{H_1,\ldots,H_k\in\comgraphs} \eta_\msr^{\sum_{i=1}^k (\abs{H_i}-1)} 
\sum_{j=1}^k \frac{2\abs{\edges(H_j)}\mu_{H_j}}{\E[D^\msr]}
\prod_{\substack{i\in[k]\\i\neq j}} \frac{\abs{H_i} \mu_{H_i}}{\E[D^\msr]} \\
&= \sum_{k\in\Z^+} p_k \sum_{H_1,\ldots,H_k\in\comgraphs} \sum_{j=1}^k \frac{2\abs{\edges(H_j)}\mu_{H_j} \cdot \eta_\msr^{\abs{H_j}-1}}{\E[D^\msr]} 
\prod_{\substack{i\in[k]\\i\neq j}} \frac{\abs{H_i} \mu_{H_i} \cdot \eta_\msr^{\abs{H_i}-1}}{\E[D^\msr]} \\
&= \sum_{k\in\Z^+} p_k \sum_{j=1}^k
\frac{\sum_{H_j\in\comgraphs} 2\abs{\edges(H_j)}\mu_{H_j} \cdot \eta_\msr^{\abs{H_j}-1}}{\E[D^\msr]} 
\prod_{\substack{i\in[k]\\i\neq j}} \frac{\sum_{H_i\in\comgraphs} \abs{H_i} \mu_{H_i} \cdot \eta_\msr^{\abs{H_i}-1}}{\E[D^\msr]}.
\eal \eeq
Recall \eqref{eq:def_sizebiasing}, \eqref{eq:def_genfunc}, and $\bitq$ from \cref{asmp:convergence} \cref{cond:limit_rdeg}, and note that
\beq\label{eq:loc_glob_edges7} \bal \frac{\sum_{H_i\in\comgraphs} \abs{H_i} \mu_{H_i} \cdot \eta_\msr^{\abs{H_i}-1}}{\E[D^\msr]}
&= \sum_{m\in\Z^+} \frac{m \cdot q_m}{\E[D^\msr]} \eta_\msr^{m-1} \\
&= \sum_{m\in\Z^+} \P\bigl( \wit D^\msr = m-1 \bigr) \eta_\msr^{m-1} 
= G_{\wit D^\msr} (\eta_\msr) = \eta_\msl,
\eal \eeq
as we have shown in \cref{ss:BPapprox}. 
Consequently in \eqref{eq:loc_glob_edges6}, the last product over $i$  equals $\eta_\msl^{k-1}$, and in the factor before, we recognize an expectation wrt the random graph $\rg$ with pmf $\bm\mu$. 
Recall $\bitp$ from \cref{asmp:convergence} \cref{cond:limit_ldeg}. Combining (\ref{eq:loc_glob_edges6}-\ref{eq:loc_glob_edges7}), we have that \eqref{eq:loc_glob_edges2} without the minus sign equals
\beq\label{eq:loc_glob_edges8} \bal &\sum_{k\in\Z^+} p_k \sum_{j=1}^k \frac{2\E\bigl[ \abs{\edges(\rg)} \eta_\msr^{\abs{\rg}-1} \bigr]}{\E[D^\msr]} \eta_\msl^{k-1} 
= \frac{2\E\bigl[ \abs{\edges(\rg)} \eta_\msr^{\abs{\rg}-1} \bigr]}{\E[D^\msr]} \sum_{k\in\Z^+} p_k \cdot k \cdot \eta_\msl^{k-1} \\
&= 2\E\bigl[ {\abs{\edges(\rg)}} \eta_\msr^{{\abs{\rg}-1}} \bigr] 
\frac{\E[D^\msl]}{\E[D^\msr]}
\sum_{k\in\Z^+} \frac{k \cdot p_k}{\E[D^\msl]} \eta_\msl^{k-1} 
= 2\gamma\, \E\bigl[ {\abs{\edges(\rg)}} \eta_\msr^{{\abs{\rg}-1}} \bigr] G_{\wit D^\msl}(\eta_\msl) \\
&= 2\gamma\, \E\bigl[ {\abs{\edges(\rg)}} \eta_\msr^{{\abs{\rg}-1}} \bigr] \eta_\msr 
= 2\gamma\, \E\bigl[ {\abs{\edges(\rg)}} \eta_\msr^{{\abs{\rg}}} \bigr], \eal \eeq
where we have used that we have defined $\eta_\msr = G_{\wit D^\msl}(\eta_\msl)$ in \cref{thm:degrees_giant}. Combining \eqref{eq:loc_glob_edges5} and \eqref{eq:loc_glob_edges8}, we obtain that
\beq \sum_{k\in\Z^+} \sum_{d\in\N} d\cdot A(k,d) 
= 2\gamma\, \E\bigl[ {\abs{\edges(\rg)}} \bigr] 
- 2\gamma\, \E\bigl[ {\abs{\edges(\rg)}} \eta_\msr^{{\abs{\rg}}} \bigr], 
\eeq
which is equivalent to \eqref{eq:equiv_edgesformulas}.
\end{proof}

\section{Death processes and their hitting times}
\label{apx:concentration_deathprocesses}
In this section, we prove \cref{claim:concentration}. We only show that statement \eqref{stm:t} implies statement \eqref{stm:c}, the proof of the reverse implication is analogous.

\begin{proof}[Proof: statement \eqref{stm:t} implies statement \eqref{stm:c}]
For $\delta>0, T<\infty$ and $\eps>0, c_0>0$, we define the events
\begin{gather} \nth\goodevent_1(\delta,T) := \bigl\{ \sup\nolimits_{t\leq T}\, \bigl\lvert a_n^{-1}\nth X(t) - f(t)\bigr\rvert < \delta \bigr\}, \\
\nth\goodevent_2(\eps,c_0) := \bigl\{ \sup\nolimits_{c\geq c_0}\, \bigl\lvert \nth\CT(c) - \inv f(c)\bigr\rvert < \eps \bigr\}.
\end{gather}
Statement \eqref{stm:t} implies that for any $\delta > 0$ and $T<\infty$ fixed,
\beq \P\bigl(\nth\goodevent_1(\delta,T)\bigr) \to 1. \eeq
Note that statement \eqref{stm:c} is equivalent to $\nth\goodevent_2(\eps,c_0)$ happening whp for any $\eps>0, c_0>0$. We prove this by finding a convenient correspondence of the parameters such that the (known to be whp) event $\nth\goodevent_1(\delta,T)$ implies $\nth\goodevent_2(\eps,c_0)$. We fix $\eps>0$ and $c_0>0$ and denote the random function $\nth x(t) := a_n^{-1}\nth X(t)$. For some $\delta$ and $T$ yet to be chosen, on the event $\goodevent_1(\delta,T)$, for each $t\leq T$,
\beq f(t)-\delta < \nth x(t) < f(t)+\delta. \eeq
Let $t_1 := \inv f (c+\delta)$ and $t_2 := \inv f (c-\delta)$. We choose $T := \inv f(c_0-\delta)$, so that $t_1, t_2 \leq T$ for any choice of $c\geq c_0$. Then, on the event $\goodevent_1(\delta,T)$,
\begin{subequations}
\begin{gather} \nth x(t_1) > f(t_1)-\delta = c, \\
\nth x(t_2) < f(t_2) + \delta = c.
\end{gather}
\end{subequations}
Recall that $\nth\CT(c) = \inf\{ t:\nth x(t)\leq c \}$. Since $\nth x(t)$ is non-increasing, necessarily
\beq\label{eq:taubound} \inv f (c+\delta) = t_1 \leq \nth\CT(c) \leq t_2 = \inv f (c-\delta). \eeq
Since $\inv f$ is continuous on $(0,1]$, it is uniformly continuous on $[c_0/2,1]$. Hence for our fixed $\eps$, we can choose $0<\delta=\delta(\eps)<c_0/2$ such that that for any $s_1, s_2\in[c_0/2,1]$, if $\abs{s_1-s_2}<\delta$, then $\abs{\inv f (s_1) - \inv f(s_2)} < \eps$. Hence for this choice of $\delta$,
\begin{subequations}\label{eq:unifcont}
\begin{gather} \inv f (c-\delta) < \inv f(c) + \eps, \\
\inv f (c+\delta) > \inv f(c) - \eps.
\end{gather}
\end{subequations}
Combining \eqref{eq:taubound} and \eqref{eq:unifcont}, we obtain that on the event $\goodevent_1(\delta,T)$, $\abs{\nth\CT(c) - \inv f(c)} < \eps$ holds uniformly in $c\in[c_0,1]$. 
That is, with the chosen $T$ and $\delta$, $\nth\goodevent_1(\delta,T)$ implies $\nth\goodevent_2(\eps,c_0)$ for the given $\eps, c_0$. Then
\beq 1 \geq \P\bigl( \nth\goodevent_2(\eps,c_0) \bigr) \geq
\P\bigl( \nth\goodevent_1(\delta,T) \bigr) \to 1 \eeq
as $n\to\infty$. We conclude that statement \eqref{stm:t} implies statement \eqref{stm:c}.
\end{proof}

\section*{Acknowledgements}
This work is supported by the Netherlands Organisation for Scientific Research (NWO) through VICI grant 639.033.806 (RvdH), VENI grant 639.031.447 (JK), the Gravitation {\sc Networks} grant 024.002.003 (RvdH), and TOP grant 613.001.451 (VV). 

\printbibliography

\end{document}